\newcommand{\bbfont}{\mathbbm}
\newcommand{\tfs}[1]
{
\ifthenelse{\equal{\f@shape}{n}}{\ensuremath{\mathrm{#1}}}
	{\ifthenelse{\equal{\f@shape}{sc}}{\ensuremath{\mathrm{#1}}}
		{\ifthenelse{\equal{\f@shape}{it}}{\ensuremath{\mathit{#1}}}
			{\ifthenelse{\equal{\f@shape}{sl}}{\ensuremath{\mathit{#1}}}{}	
			}
		}
	}
}
\renewcommand{\tfs}{\mathrm}
\newcommand{\btfs}[1]
{
\ifthenelse{\equal{\f@shape}{n}}{\ensuremath{\mathrm{#1}}}
	{\ifthenelse{\equal{\f@shape}{sc}}{\ensuremath{\mathrm{#1}}}
		{\ifthenelse{\equal{\f@shape}{it}}{\ensuremath{\mathit{#1}}}
			{\ifthenelse{\equal{\f@shape}{sl}}{\ensuremath{\mathit{#1}}}{}	
			}
		}
	}
}
\newcommand{\CC}{{\bbfont C}}
\newcommand{\FF}{{\bbfont F}}
\newcommand{\NN}{{\bbfont N}}
\newcommand{\RR}{{\bbfont R}}
\newcommand{\ZZ}{{\bbfont Z}}
\newcommand{\upd}{{\mathrm{d}}}
\newcommand{\ulp}{{\textup{(}}}
\newcommand{\urp}{{\textup{)}}}
\newcommand{\uppars}[1]{\ulp #1\urp}
\newcommand{\abs}[1]{{\lvert #1 \rvert}}
\newcommand{\norm}[1]{{\lVert #1 \rVert}}
\newcommand{\braces}[1]{{\{ #1\}}}
\newcommand{\lrabs}[1]{{\left\lvert #1 \right\rvert}}
\newcommand{\lrangles}[1]{{\left\langle #1\right\rangle}}
\newcommand{\lrbraces}[1]{{\left\{ #1\right\}}}
\newcommand{\set}[1]{\braces{#1}}
\newcommand{\desset}[1]{\braces{\,#1\,}}
\newcommand{\lrdesset}[1]{\lrbraces{\,#1\,}}
\newcommand{\di}[1]{\,\upd #1}
\newcommand{\cont}{\tfs{C}}
\newcommand{\conto}{\cont_0}
\newcommand{\contb}{\cont_{\tfs{b}}}
\newcommand{\contc}{\cont_{\tfs{c}}}
\newcommand{\bounded}{\tfs{B}}
\newcommand{\linear}{\tfs{L}}
\newcommand{\regular}{\linear_{\tfs{r}}}
\newcommand{\Ell}{\tfs{L}}
\renewcommand{\Re}{\operatorname{Re}} 
\theoremstyle{plain}
\newtheorem{theorem}{Theorem}[section]
\newtheorem{proposition}[theorem]{Proposition}
\newtheorem{lemma}[theorem]{Lemma}
\newtheorem{corollary}[theorem]{Corollary}
\newtheorem*{theorem*}{Theorem}
\newtheorem*{proposition*}{Proposition}
\newtheorem*{lemma*}{Lemma}
\newtheorem*{corollary*}{Corollary}
\newtheorem*{conjecture*}{Conjecture}
\newtheorem*{assumption*}{Assumption}
\newtheorem*{hypothesis*}{Hypothesis}
\newtheorem*{question*}{Question}
\newtheorem*{problem*}{Problem}
\newtheorem*{task*}{Task}
\newtheorem*{addendum*}{Addendum}
\newtheorem*{idea*}{Idea}
\newtheorem*{suggestion*}{Suggestion}
\newtheorem*{context*}{Context}
\newtheorem*{exercise*}{Exercise}
\theoremstyle{definition}
\newtheorem{definition}[theorem]{Definition}
\newtheorem{example}[theorem]{Example}
\newtheorem{remark}[theorem]{Remark}
\newtheorem*{definition*}{Definition}
\newtheorem*{example*}{Example}
\newtheorem*{remark*}{Remark}
\setlist[enumerate,1]{label=\textup{(\arabic*)},ref=\arabic*}
\setlist[enumerate,2]{label=\textup{(\alph*)},ref=\arabic{enumi}.\alph*}
\setlist[enumerate,3]{label=\textup{(\roman*)},ref=\arabic{enumi}.\alph{enumii}.\roman*}
\setlist[enumerate,4]{label=\textup{(\Alph*)},ref=\arabic{enumi}.\alph{enumii}.\roman{enumiii}.\Alph*}
\crefname{theorem}{Theorem}{Theorems}
\crefname{proposition}{Proposition}{Propositions}
\crefname{lemma}{Lemma}{Lemmas}
\crefname{corollary}{Corollary}{Corollaries}
\crefname{conjecture}{Conjecture}{Conjectures}
\crefname{definition}{Definition}{Definitions}
\crefname{example}{Example}{Examples}
\crefname{remark}{Remark}{Remarks}
\crefname{assumption}{Assumption}{Assumptions}
\crefname{hypothesis}{Hypothesis}{Hypotheses}
\crefname{question}{Question}{Questions}
\crefname{problem}{Problem}{Problems}
\crefname{task}{Task}{Tasks}
\crefname{addendum}{Addendum}{Addenda}
\crefname{idea}{Idea}{Ideas}
\crefname{suggestion}{Suggestion}{Suggestions}
\crefname{context}{Context}{Contexts}
\crefname{exercise}{Exercise}{Exercises}
\crefname{equation}{equation}{equations}
\crefname{enumi}{part}{parts}
\crefname{enumii}{part}{parts}
\crefname{enumiii}{part}{parts}
\crefname{enumiv}{part}{parts}
\numberwithin{equation}{section}
\newcommand{\circlesign}[1]{
	\mathbin{
		\mathchoice
		{\buildcirclesign{\displaystyle}{#1}}
		{\buildcirclesign{\textstyle}{#1}}
		{\buildcirclesign{\scriptstyle}{#1}}
		{\buildcirclesign{\scriptscriptstyle}{#1}}
	}
}
\newcommand\buildcirclesign[2]{%
	\begin{tikzpicture}[baseline=(X.base), inner sep=0, outer sep=0]
	\node[draw,circle] (X)  {\ensuremath{#1 #2}};
	\end{tikzpicture}%
}
\newcommand{\bil}{\circlesign{\star}}
\newcommand{\il}{\tfs{i}}
\newcommand{\el}{\tfs{e}}
\newcommand{\vl}{E}
\newcommand{\vltwo}{F}
\newcommand{\oadj}[1]{#1^{\sim}}
\newcommand{\res}{R_\tstwo}
\newcommand{\resdual}{\oadj{\res}}
\newcommand{\alg}{A}
\newcommand{\algtwo}{B}
\newcommand{\comp}[1]{#1_{\CC}}
\newcommand{\compalg}{\comp{\alg}}
\newcommand{\compalgtwo}{\comp{\algtwo}}
\newcommand{\compvl}{\comp{\vl}}
\newcommand{\compvltwo}{\comp{\vltwo}}
\newcommand{\compabs}[1]{\comp{\abs{#1}}}
\newcommand{\compnorm}[1]{\comp{\norm{#1}}}
\newcommand{\tstwo}{Y}
\newcommand{\Elloneloc}{\Ell^{1,\tfs{loc}}}
\newcommand{\regularnorm}[1]{\norm{#1}_{\tfs{r}}}
\newcommand{\pos}[1]{#1^+}
\newcommand{\nega}[1]{#1^-}
\newcommand{\supp}[1]{\tfs{supp}\,#1}
\newcommand\group{G}
\newcommand{\ts}{X}
\newcommand{\borelsalg}{{\mathfrak{B}}}
\newcommand{\meas}{{\tfs{M}}}
\newcommand{\EllonegroupR}{\Ell^1(\group,\RR)}
\newcommand{\EllpgroupR}{\Ell^p(\group,\RR)}
\newcommand{\measgroupR}{\tfs{M}(\group,\RR)}
\newcommand{\EllonegroupC}{\Ell^1(\group,\CC)}
\newcommand{\EllpgroupC}{\Ell^p(\group,\CC)}
\newcommand{\measgroupC}{\tfs{M}(\group,\CC)}
\newcommand{\EllonegroupF}{\Ell^1(\group,\FF)}
\newcommand{\EllpgroupF}{\Ell^p(\group,\FF)}
\newcommand{\measgroupF}{\tfs{M}(\group,\FF)}
\newcommand{\conttsR}{\cont(\ts,\RR)}
\newcommand{\contbtsR}{\contb(\ts,\RR)}
\newcommand{\contotsR}{\conto(\ts,\RR)}
\newcommand{\contctsR}{\contc(\ts,\RR)}
\newcommand{\conttsC}{\cont(\ts,\CC)}
\newcommand{\contbtsC}{\contb(\ts,\CC)}
\newcommand{\contotsC}{\conto(\ts,\CC)}
\newcommand{\contctsC}{\contc(\ts,\CC)}
\newcommand{\contogroupR}{\conto(\group,\RR)}
\newcommand{\contcgroupR}{\contc(\group,\RR)}
\newcommand{\contcgroupC}{\contc(\group,\CC)}
\newcommand{\contctsRod}{\contctsR^{\sim}}
\newcommand{\contctstwoR}{\contc(\tstwo,\RR)}
\newcommand{\contctstwoRod}{\contctstwoR^{\sim}}
\newcommand{\contcgroupRod}{\contcgroupR^\sim}
\newcommand{\regularextposmeasts}{\meas_{{\tfs{r}}}(\ts,\borelsalg,\overline{\pos{\RR}})}
\newcommand{\regularposmeasts}{\meas_{{\tfs{r}}}(\ts,\borelsalg,\pos{\RR})}
\newcommand{\regularrealmeasts}{\meas_{{\tfs{r}}}(\ts,\borelsalg,\RR)}
\newcommand{\od}[1]{{#1}^\sim}
\newcommand{\vlod}{\od{\vl}}
\newcommand{\nd}[1]{{#1}^\prime}
\newcommand{\vlnd}{\nd{\vl}}
\newcommand{\ode}{\varphi} 
\newcommand{\Ode}{\Phi}
\newcommand{\rep}{\pi}
\newcommand{\ind}{\chi}
\newcommand{\sgn}{\tfs{sgn}}
\newcommand{\orderboundedops}[1]{\tfs{L}_{\tfs{b}}(#1)}
\newcommand{\regularops}[1]{\tfs{L}_{\tfs{r}}(#1)}
\newcommand{\pairing}[1]{\lrangles{#1}}
\newcommand{\sigmats}{\mathcal B}
\newcommand{\conv}{\,\star\,}
\newcommand{\convp}{\,\star_p\,}
\newcommand{\Hm}{m_\group}
\newcommand{\dH}{\di{\Hm}}
\newcommand{\emb}[1]{\ode_{#1}}
\newcommand{\Emb}{\Ode}
\newcommand{\aux}{\gamma}
\newcommand{\auxn}{\aux_n}
\newcommand\indicator{1}
\newcommand{\leftreg}{\pi}
\begin{document}


\title [Lattice homomorphisms in harmonic analysis]{Lattice homomorphisms in harmonic analysis}

\author{H.\ Garth Dales}
\address{H.\ Garth Dales, Department of Mathematics and Statistics, University of Lancaster, Lancaster LA1 4YF, United Kingdom}
\email{g.dales@lancaster.ac.uk}

\author{Marcel de Jeu}
\address{Marcel de Jeu, Mathematical Institute, Leiden University, P.O.\ Box 9512, 2300 RA Leiden, the Netherlands}
\email{mdejeu@math.leidenuniv.nl}

\dedicatory{Dedicated to Ben de Pagter on the occasion of his 65th birthday}


\keywords{Locally compact group, convolution, Banach lattice, lattice homomorphism, locally compact space, order dual, Radon measure}

\subjclass[2010]{Primary 43A99; Secondary 28C05, 06F25, 43A10, 43A15, 43A20}


\begin{abstract}
Let $S$ be a non-empty, closed subspace of a locally compact group $G$ that is a subsemigroup of $G$. Suppose that $X, Y$, and $Z$ are Banach lattices that are vector sublattices of the order dual $\textup{C}_{\textup{c}}(S,\mathbb R)^\sim$ of the real-valued, continuous functions with compact support on $S$, and where $Z$ is Dedekind complete. Suppose that $\ast: X\times Y\to Z$ is a positive bilinear map such that $\supp{(x\ast y)}\subseteq\supp{x}\,\cdot\,\supp{y}$ for all $x\in X^+$ and $y\in Y^+$ with compact support. We show that, under mild conditions, the canonically associated map from $X$ into the vector lattice of regular operators from $Y$ into $Z$ is then a lattice homomorphism. Applications of this result are given in the context of convolutions, answering questions previously posed in the literature.\\
As a preparation, we show that the order dual of the continuous, compactly supported functions on a closed subspace of a locally compact space can be canonically viewed as an order ideal of the order dual of the continuous, compactly supported functions on the larger space.\\
As another preparation, we show that $\textup{L}^p$-spaces and Banach lattices of measures on a locally compact space can be embedded as vector sublattices of the order dual of the continuous, compactly supported functions on that space.
\end{abstract}
\maketitle



\section{Introduction and overview}\label{sec:introduction_and_overview}

\noindent
Let $\group$ be a locally compact group with (real) measure algebra $\measgroupR$. Then $\measgroupR$ is not only a Banach algebra with convolution as multiplication, but also a Banach lattice. The left regular representation $\rep$ of $\measgroupR$ is easily seen to take its values in the algebra of regular operators $\regularops{\measgroupR}$ on $\measgroupR$, so that we actually have an algebra homomorphism $\rep:\measgroupR\to\regularops{\measgroupR}$. Furthermore, $\measgroupR$ is Dedekind complete, so that $\regularops{\measgroupR}$ is a vector lattice again. Hence it is meaningful to wonder whether the left regular representation  $\rep:\measgroupR\to\regularops{\measgroupR}$ is not only an algebra homomorphism, but also a lattice homomorphism. This question was raised during a workshop on ordered Banach algebras at the Lorentz Center in Leiden in 2014, and it occurs in Wickstead's list of open problems based on those that were posed during this workshop; see \cite{wickstead:2017c}.

The natural approach to this question is to start with one of the Riesz--Kantorovich formulae as a basis to determine whether $\rep$ is a lattice homomorphism, and to use the explicit formula for the convolution of two measures while doing so. Then the expressions become  complicated very quickly, and an answer has not been obtained along these lines so far.

Nevertheless, the answer to the question is known: the left regular representation $\rep:\measgroupR\to\regularops{\measgroupR}$ is indeed a lattice homomorphism. The first proof of this, as obtained by the present authors, is surprisingly simple. It uses just a little more than the fact that the support of the convolution of two measures with compact support is contained in the products of the support, combined with the general fact that the modulus on a vector lattice is additive on finite sums of mutually disjoint elements. The Riesz--Kantorovich formulae and the explicit expression for the convolution of two measures are not needed.

A closer look at the proof showed that, in fact, it does not really use that the objects involved are measures. Essentially the same proof establishes that, for $1\leq p<\infty$, the natural action of $\EllonegroupR$ on $\EllpgroupR$ by convolution gives a lattice homomorphism from $\EllonegroupR$ into the regular operators $\regularops{\EllpgroupR}$ on $\EllpgroupR$. In fact, under mild conditions, it shows that, `whenever' a Banach lattice $X$ on $\group$ convolves a Banach lattice $Y$ on $\group$ into a Dedekind complete Banach lattice $Z$ on $\group$, then the natural map from $X$ into the regular operators from $Y$ into $Z$ is a lattice homomorphism.
A still closer look showed that it is not even necessary that the action of $X$ on $Y$ be given by convolution. As long as it is a positive map that satisfies the property for supports mentioned above, essentially the same proof as for $\measgroupR$ shows that the natural map from $X$ into the regular operators from $Y$ into $Z$ is still a lattice homomorphism. As a rule of thumb, this is `always' true for convolution-like positive bilinear maps. Exaggerating a little, one could say that the main problem with the original question for $\measgroupR$ is that there is too much information that obscures the underlying picture.

Above, we have spoken loosely about `essentially the same proof' and `Banach lattices on $\group$'. It is evidently desirable to be able to make this precise, and then\textemdash hopefully\textemdash give the `essential' proof of one central theorem that clarifies the mathematical backbone of the situation, and that specialises to various practical cases of interest. This is, indeed, possible. As will become apparent, the order dual $\contcgroupRod$ of the continuous functions with compact support on $\group$ can act as a large vector lattice that\textemdash this is true in a more general context of locally compact spaces\textemdash contains various familiar Banach lattices as vector sublattices. It is in this framework that such a central theorem can, indeed, be established `once and for all'. The ensuing result, which is the group case of \cref{res:general_result_for_semigroups}, below, is the heart of this article.

\smallskip

There are many examples of Banach algebras on a locally compact \emph{semi}group $S$, provided with a convolution-like product, that are also Dedekind complete Banach lattices. Again, one can ask whether the left regular representation of these algebras is a lattice homomorphism. More generally again, if a Banach lattice $X$ on $S$ `convolves' a Banach lattice $Y$ on $S$ into a Banach lattice $Z$ on $S$, where $Z$ is Dedekind complete, is the canonically associated map from $X$ into the regular operators from $Y$ into $Z$ then a lattice homomorphism? Unfortunately, the proof of the general theorem as for groups is then no longer valid. Results can still be obtained, however, when one supposes that $S$ is actually a closed subset of a locally compact group $\group$. It is then possible to reduce the problem for $S$ to the problem for $\group$, where the answer is known. For this, one merely needs to be able to view Banach lattices that are sublattices of $\od{\contc(S,\RR)}$ as Banach lattices that are sublattices of $\contcgroupRod$. This is indeed possible, since\textemdash this is a special case of a general result for closed subspaces of locally compact spaces\textemdash it can be shown that one can canonically embed $\od{\contc(S,\RR)}$ as a vector sublattice of $\contcgroupRod$, with supports being preserved under the embedding. It is thus that the group case of our main result, \cref{res:general_result_for_semigroups}, below, can actually be used to establish a similar result for semigroups that are closed subsets of locally compact groups. In the end, the original result for locally compact groups (where the actual key proof can be given) is then a special case of \cref{res:general_result_for_semigroups}. This final result is described in the abstract of this article.

\smallskip

It may have become obvious from the above discussion that the present article is at the interface of the fields of positivity, abstract harmonic analysis, and Banach algebras. It is, perhaps, not yet very common to be familiar with the basic notions of these three disciplines together. It is for this reason that we have decided to explain the necessary terms and to review the necessary results from each of these fields in an attempt to make this article accessible to all readers, regardless of their background. We also hope that, by doing this, we shall facilitate further research at the junction of these disciplines.

\smallskip

This article is organised as follows.

\smallskip

Section~\ref{sec:vector_lattices_and_banach_lattices} contains basic notions and results for vector lattices and Banach lattices,

Banach lattices can be complexified to yield complex Banach lattices; this is the topic of Section~\ref{sec:complex_banach_lattices}.

Section~\ref{sec:banach_lattice_algebras} covers the basic notions of Banach algebras and Banach lattice algebras, and introduces complex Banach lattice algebras.

Section~\ref{sec:locally_compact_spaces} is concerned with locally compact spaces, and notably with the order dual $\contctsRod$ of the continuous, compactly supported functions on a locally compact space $\ts$. As will be explained in that section, this order dual is Bourbaki's space of Radon measures on $\ts$ as in \cite{bourbaki_INTEGRATION_VOLUME_I_CHAPTERS_1-6_SPRINGER_EDITION:2004}.

Section~\ref{sec:closed_subspaces_of_locally_compact_spaces} shows how the order dual $\contctstwoRod$ for a closed subspace $\tstwo$ of a locally compact space $\ts$ can be embedded into $\contctsRod$ as an order ideal. The reader whose interest lies in groups and not in semigroups can omit this section in its entirety. We are not aware of a reference for the results in this section, which may also find applications elsewhere.

Let $\ts$ be a locally compact space. As explained above in the context where $\ts$ is a locally compact group, it is necessary to embed various familiar Banach lattices on $\ts$ as vector sublattice of $\contctsRod$. This is done in Section~\ref{sec:embedding_familiar_vector_lattices}.  We are not aware of earlier results in this direction, where the r{\^o}le of $\contctsRod$ is not dissimilar to that of the space of distributions on an open subset of $\RR^d$ in the sense of Schwartz.

Section~\ref{sec:locally_compact_groups} contains the necessary material on locally compact groups and on Banach lattices and Banach lattice algebras on such groups.

Section~\ref{sec:locally_compact_semigroups} is of a similar nature as Section~\ref{sec:locally_compact_groups}, but now for semigroups. Taken together, Sections~\ref{sec:locally_compact_groups} and~\ref{sec:locally_compact_semigroups} contain a good stockpile of Banach lattice algebras. Some of them are semisimple while others are radical\textemdash this does not seem to influence the order properties of the left regular representations. We hope that these examples can also serve as test cases for further study of Banach lattice algebras in general.	

Section~\ref{sec:main_theorem} contains our key results. This section is the core of the present article and the other sections are, in a sense, merely auxiliary. The reader may actually wish to have a look at this section, and notably at the proof for the group case of \cref{res:general_result_for_semigroups}, before reading other sections.

In Section~\ref{sec:lattice_homomorphisms_in_harmonic_analysis}, all is put together. The general results from Section~\ref{sec:main_theorem}, combined with the embedding results from Section~\ref{sec:embedding_familiar_vector_lattices}, are now easily combined to yield that various canonical maps are actually lattice homomorphism. The left regular representation of $\meas(\group,\RR)$ is one of them. We also include in this section a list of cases where it is known whether the left regular representation of a Dedekind complete Banach lattice algebra is a lattice homomorphism or not.

Section~\ref{sec:possible_further_research_in_ordered_harmonic_analysis} discusses the relation between one of the results in Section~\ref{sec:lattice_homomorphisms_in_harmonic_analysis} and earlier work by Arendt, Brainerd and Edwards, and Gilbert.  This leads to questions for further research, on which we hope to be able to report in the future.

\smallskip

We conclude this section by introducing a few conventions and notations.

\smallskip

The vector spaces and algebras in this article are all over the real field, $\RR$, unless stated otherwise. This is the canonical convention in the field of positivity. On the other hand, the canonical convention in the context of Banach algebras and abstract harmonic analysis is that the base field be the complex field, $\CC$. There seems to be no natural way to reconcile these two conventions where these disciplines meet. In view of the prominent r{\^o}le of ordering in the present article, we have chosen to consistently side with the convention in positivity. Readers from a different background are, therefore, cautioned to realise that a Banach algebra is a real Banach algebra, and that, e.g., the measure algebra of a locally compact group consists of the \emph{real} signed regular Borel measures on the group.  We apologise for the mental dissonance that such consequences of our efforts to be precise and consistent will almost inevitably cause. In a further attempt to prevent misunderstanding as much as possible, we have included the field in the notation for concrete spaces. The group algebra of a locally compact group is denoted by $\EllonegroupR$, for example.

We shall let $\FF$ denote the choice for either $\RR$ or $\CC$ when results are valid in both cases.

Algebras are always linear and associative. An algebra need not have an identity element. An algebra homomorphism between two unital algebras need not map the identity element to the identity element.

Topological spaces are always supposed to be Hausdorff, unless stated otherwise.

Let $\ts$ be a topological space. Then we let $\conttsR$ denote the real-valued, continuous functions on $\ts$, we let $\contbtsR$ denote the real-valued, bounded, continuous functions on $\ts$, we let $\contotsR$ denote the real-valued, continuous functions on $\ts$ that vanish at infinity, and we let $\contctsR$ denote the real-valued, continuous functions on $\ts$ with compact support. Their complex counterparts $\conttsC$, $\contbtsC$, $\contotsC$, and $\contctsC$ are similarly defined.

Let $S$ be a non-empty set. Then $\norm{f}_\infty$ denotes the uniform norm of a bounded, real- or complex-valued function $f$ on $S$. Sometimes we shall write  $\norm{f}_{\infty,S}$ if confusion could arise otherwise.

Let $\vl$ and $\vltwo$ be normed spaces over $\FF$. Then $\bounded(\vl,\vltwo)$ denotes the bounded linear operators from $\vl$ into $\vltwo$. We shall write $\bounded(\vl)$ for $\bounded(\vl,\vl)$.

The identity element of a group $\group$ is denoted by $e_G$.

Semigroups need not have identity elements.

Let $S$ be a semigroup, and suppose that $A_1$ and $A_2$ are non-empty subsets of $S$. Then we set $A_1\,\cdot\,A_2 \coloneqq \desset{ a_1a_2: a_1\in A_1,\,a_2\in A_2}$.

\section{Vector lattices and Banach lattices}\label{sec:vector_lattices_and_banach_lattices}

\noindent In this section, we shall cover some basic material on vector and Banach lattices. The details can be found in introductory books such as \cite{de_jonge_van_rooij_INTRODUCTION_TO_RIESZ_SPACES:1977,zaanen_INTRODUCTION_TO_OPERATOR_THEORY_IN_RIESZ_SPACES:1997}. More advanced general references are  \cite{abramovich_aliprantis_INVITATION_TO_OPERATOR_THEORY:2002,abramovich_aliprantis_PROBLEMS_IN_OPERATOR_THEORY:2002,aliprantis_burkinshaw_LOCALLY_SOLID_RIESZ_SPACES_WITH_APPLICATIONS_TO_ECONOMICS_SECOND_EDITION:2003,aliprantis_burkinshaw_POSITIVE_OPERATORS_SPRINGER_REPRINT:2006,luxemburg_zaanen_RIESZ_SPACES_VOLUME_I:1971,meyer-nieberg_BANACH_LATTICES:1991,schaefer_BANACH_LATTICES_AND_POSITIVE_OPERATORS:1974,wnuk_BANACH_LATTICES_WITH_ORDER_CONTINUOUS_NORMS:1999,zaanen_RIESZ_SPACES_VOLUME_II:1983}.

Suppose that $\vl$ is a partially ordered vector space, i.e., a vector space that is supplied with a partial ordering such that $x+z\geq y+z$ for all $z\in\vl$ whenever $x,y\in\vl$ are such that $x\geq y$, and such that $\alpha x\geq 0$ whenever $x\geq 0$ in $\vl$ and $\alpha\geq 0$ in $\RR$. The subset of positive elements of $\vl$ is then a cone, and it is denoted by $\pos{E}$.

A \emph{vector lattice} or \emph{Riesz space} is a partially ordered vector space $\vl$ such that every two elements $x,y$ of $\vl$ have a least upper bound in $\vl$; this supremum of the set $\set{x,y}$ is denoted by $x\vee y$. The infimum of $\set{x,y}$ then also exists; it is denoted by $x\wedge y$. For $x\in\vl$, we define its \emph{modulus $\abs{x}$} as $\abs{x}\coloneqq x\vee(-x)$, its \emph{positive part $\pos{x}$} as $\pos{x}\coloneqq x\vee 0$, and its \emph{negative part $\nega{x}$} as $\nega{x}\coloneqq (-x)\vee 0$. Then $\pos{x},\nega{x}\in\pos{E}$, $x=\pos{x}-\nega{x}$, and $\abs{x}=\pos{x}+\nega{x}$.

Let $\vl$ be a vector lattice. Two elements $x$ and $y$ of $\vl$ are \emph{disjoint} if $\abs{x}\wedge\abs{y}=0$; this is denoted by $x\perp y$. When this is the case, then $\abs{x+y}=\abs{x}+\abs{y}$. This latter property lies at the heart of the results in this article, and can be found in \cite[Theorem~14.4(i)]{luxemburg_zaanen_RIESZ_SPACES_VOLUME_I:1971} and \cite[Theorem~8.2(i)]{zaanen_INTRODUCTION_TO_OPERATOR_THEORY_IN_RIESZ_SPACES:1997}, for example.

Let $x\in\vl$. Then $\pos{x}\perp\nega{x}$. Suppose that $x=y_1-y_2$ with $y_1,y_2\in\pos{\vl}$. Then $y_1\geq \pos{x}$ and $y_2
\geq\nega{x}$. Suppose, further, that $y_1\perp y_2$. Then $y_1=\pos{x}$ and $y_2=\nega{x}$.

Let $\vl$ be a vector lattice, and let $\vltwo$ be a linear subspace of $\vl$. Then $\vltwo$ is a \emph{vector sublattice of $\vl$} if $x\vee y\in\vltwo$ whenever $x,y\in\vltwo$; then also $x\wedge y\in\vltwo$ whenever $x,y\in\vltwo$, and $\abs{x}\in\vltwo$ whenever $x\in\vltwo$.

Let $\vl$ be a vector lattice, and let $\vltwo$ be a vector sublattice of $\vl$. Then $\vltwo$ is an \emph{order ideal of $\vl$} if $x\in\vltwo$ whenever $x,y\in\vl$ are such that $\abs{x}\leq\abs{y}$ and $y\in\vltwo$.

An \emph{order interval} in a vector lattice $\vl$ is a subset of the form 
\[
\desset{x\in\vl : a\leq x\leq b}
\] 
for some $a\leq b$ in $\vl$. A subset of $\vl$ is \emph{order bounded} if it is contained in an order interval.

A vector lattice $\vl$ is \emph{Dedekind complete} or \emph{order complete} if every non-empty subset of $\vl$ that is bounded above in $\vl$ has a supremum in $\vl$.

\begin{example}\label{ex:spaces_of_continuous_functions_one}
	Let $\ts$ be a non-empty, topological space. Then $\cont{(\ts,\RR)}$, $\contb{(\ts,\RR)}$, $\contotsR$, and $\contctsR$ are vector lattices when supplied with the pointwise ordering.
	
	Let $\ts$ be a non-empty, compact space. Then $\conttsR$ is Dedekind complete if and only if $\ts$ is extremely disconnected (some sources write `extremally disconnected'), i.e., if and only if the closure of every open subset of $\ts$ is open. This result is due to Nakano; see \cite[Proposition~4.2.9]{dales_BANACH_ALGEBRAS_AND_AUTOMATIC_CONTINUITY:2000},  \cite[Theorem~2.3.3]{dales_dashiell_lau_strauss_BANACH_SPACES_OF_CONTINUOUS_FUNCTIONS_AS_DUAL_SPACES:2016}, or  \cite[Theorem~12.16]{de_jonge_van_rooij_INTRODUCTION_TO_RIESZ_SPACES:1977}, for example. The Stone--\v{C}ech compactification $\beta\NN$ of the natural numbers $\NN$ is an example of a compact, extremely disconnected space.
	
\end{example}

\begin{example}\label{ex:Ellp_spaces_one}
	Let $\ts$ be a non-empty set, let $\borelsalg$ be a $\sigma$-algebra of subsets of $\ts$, and let $\mu:\borelsalg\to[0,\infty]$ be a measure on $\borelsalg$. For $1\leq p\leq\infty$, we supply $\Ell^p(\ts,\borelsalg,\mu,\RR)$ with the pointwise $\mu$-almost everywhere partial ordering. Then $\Ell^p(\ts,\borelsalg,\RR)$ is a vector lattice. For $1\leq p<\infty$, it is Dedekind complete. For $p=\infty$, it is Dedekind complete if $\mu$ is localisable, i.e., if every measurable subset of $\ts$ of infinite measure has a measurable subset of finite,  strictly positive measure and the measure algebra of $\ts$ is order complete. In particular, $\Ell^\infty(\ts,\borelsalg,\mu)$ is Dedekind complete when $\mu$ is $\sigma$-finite. We refer to \cite[p.~126-127]{luxemburg_zaanen_RIESZ_SPACES_VOLUME_I:1971} and \cite[Definition~211G, Theorem~211L, and Theorem~243H]{fremlin_MEASURE_THEORY_VOLUME_2:2003} for proofs.
	
	An example, taken from \cite{troitsky_UNPUBLISHED:2017}, where $\Ell^\infty(\ts,\borelsalg,\mu)$ is not Dedekind complete, is as follows. Let $\ts$ be an uncountable set, and let $\borelsalg$ be the $\sigma$-algebra of all subsets $A$ of $\ts$ such that either $A$ or $\ts\setminus A$ is uncountable. Let $\mu$ be the counting measure on $\borelsalg$. Take a subset $U$ of $\ts$ such that both $U$ and $\ts\setminus U$ are uncountable, and set
	\[
	S\coloneqq\desset{\indicator_A: A\subset U\text{ and }A\text{ is countable} }.
	\]
	The $S$ is a subset of $\Ell^\infty(\ts,\borelsalg,\mu)$ that is bounded above, but $S$ has no supremum in $\Ell^\infty(\ts,\borelsalg,\mu)$. Hence $\Ell^\infty(\ts,\borelsalg,\mu)$ is not Dedekind complete.
	
	\end{example}

\begin{example}\label{ex:spaces_of_measures_one}
	Let $\ts$ be a non-empty set, and let $\borelsalg$ be a $\sigma$-algebra of subsets of $\ts$. We let $\meas(\ts,\borelsalg,\RR)$ be the vector space of all signed measures $\mu:\borelsalg\to\RR$. We introduce a partial ordering on $\meas(\ts,\borelsalg,\RR)$ by setting $\mu\geq\nu$ whenever $\mu,\nu\in \meas(\ts,\borelsalg,\RR)$ are such that $\mu(A)\geq\nu(A)$ for all $A\in\borelsalg$. Then $\meas(\ts,\borelsalg,\RR)$ is a Dedekind complete vector lattice; see \cite[p.~187]{zaanen_INTRODUCTION_TO_OPERATOR_THEORY_IN_RIESZ_SPACES:1997}. For $\mu,\nu\in\meas(\ts,\borelsalg,\RR)$, the supremum $\mu\vee\nu$ of $\mu$ and $\nu$ is given by the formula
	\begin{equation}\label{eq:supremum_of_two_measures}
	(\mu\vee\nu)(A)=\sup\desset{\mu(B)+\nu(A\setminus B) : B\in\borelsalg,\,B\subseteq A}
	\end{equation}
	for $A\in\borelsalg$. The formula for the infimum is similar, and,  for $\mu\in\meas(\ts,\borelsalg,\RR)$, we have
	\begin{equation}\label{eq:modulus_of_a_measure}
	\abs{\mu}(A)=\sup\lrdesset{\sum_{i=1}^n \abs{\mu(B_i)}:B_1,\ldots,B_n\in\borelsalg\text{ form a disjoint partition of }A}
	\end{equation}
	for $A\in\borelsalg$. That is, $\abs{\mu}$ is the usual total variation measure of $\mu$.
\end{example}

Suppose that $\vl$ and $\vltwo$ are vector lattices and that $T:\vl\to\vltwo$ is a linear operator. Then $T$ is \emph{order bounded} if $T$ maps order bounded subsets of $\vl$ to order bounded subsets of $\vltwo$. Equivalently, $T$ should map order intervals in $\vl$ into order intervals in $\vltwo$. The order bounded linear operators from $\vl$ into $\vltwo$ form a vector space that is denoted by $\orderboundedops{\vl,\vltwo}$. We shall write $\orderboundedops{\vl}$ for $\orderboundedops{\vl,\vl}$.

Let $S,T:\vl\to\vltwo$ be order bounded linear operators. Then we say that $S\geq T$ if $Sx\geq Tx$ for all $x\in\pos{\vl}$. This introduces a partially ordering on $\orderboundedops{\vl,\vltwo}$. The \emph{regular operators} from $\vl$ into $\vltwo$ are the elements of the subspace $\regularops{\vl,\vltwo}$ of $\orderboundedops{\vl,\vltwo}$ that is spanned by the positive linear operators from $\vl$ into $\vltwo$. Thus the regular operators from $\vl$ into $\vltwo$ are the linear operators $T$ from $\vl$ into $\vltwo$ that can be written as $T=S_1-S_2$, where $S_1,S_2\in\orderboundedops{\vl,\vltwo}$ are both positive. We shall write $\regularops{\vl}$ for $\regularops{\vl,\vl}$.

It is not generally true that the partially ordered vector spaces $\orderboundedops{\vl,\vltwo}$ or $\regularops{\vl,\vltwo}$ are again vector lattices, but there is a sufficient condition on the codomain for this to be the case. We have the following; see \cite[Theorem~1.18]{aliprantis_burkinshaw_POSITIVE_OPERATORS_SPRINGER_REPRINT:2006} or \cite[Theorem~20.4]{zaanen_INTRODUCTION_TO_OPERATOR_THEORY_IN_RIESZ_SPACES:1997}, for example.

\begin{theorem}\label{res:riesz_kantorovich}
	Let $\vl$ and $\vltwo$ be vector lattices such that $\vltwo$ is Dedekind complete. Then the spaces $\orderboundedops{\vl,\vltwo}$ and $\regularops{\vl,\vltwo}$ coincide. Moreover, $\regularops{\vl,\vltwo}$ is a Dedekind complete vector lattice, where the lattice operations are given by
	\begin{align}
	\abs{T}(x)&=\sup\desset{\abs{Ty}:\abs{y}\leq x},\label{eq:RK1}\\
	[S\vee T](x)&=\sup\desset{Sy+Tx: y,z\in\pos{\vl},\,y+z=x}, \text{ and}\label{eq:RK2}\\
	[S\wedge T](x)&=\inf\desset{Sy+Tx: y,z\in\pos{\vl},\,y+z=x}\label{eq:RK3}
	\end{align}
	for all $S,T\in\regularops{\vl,\vltwo}$ and $x\in\pos{\vl}$.
	
\end{theorem}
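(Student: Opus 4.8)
The plan is to build the lattice structure on $\orderboundedops{\vl,\vltwo}$ by hand, starting from the positive part of an operator. The inclusion $\regularops{\vl,\vltwo}\subseteq\orderboundedops{\vl,\vltwo}$ is immediate: a positive operator maps an order interval $[a,b]$ into the order interval $[Ta,Tb]$ and is therefore order bounded, and differences of order bounded operators are order bounded. The substance of the theorem is the reverse inclusion together with the explicit formulae, and the whole argument hinges on producing, for a given $T\in\orderboundedops{\vl,\vltwo}$, a positive operator that will play the r\^ole of $\pos{T}$.

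To this end, first I would define, for $x\in\pos{\vl}$, the element $S(x)\coloneqq\sup\desset{Ty:0\leq y\leq x}$. This supremum exists because $T$ is order bounded, so that $\desset{Ty:0\leq y\leq x}$ is an order bounded subset of $\vltwo$, and $\vltwo$ is Dedekind complete. Positive homogeneity $S(\lambda x)=\lambda S(x)$ for $\lambda\geq 0$ is a trivial change of variable. The crucial point is additivity on the cone, $S(x_1+x_2)=S(x_1)+S(x_2)$ for $x_1,x_2\in\pos{\vl}$: the inequality $\geq$ follows by adding admissible $y_1,y_2$, while $\leq$ requires the Riesz decomposition property, namely that any $y$ with $0\leq y\leq x_1+x_2$ splits as $y=y_1+y_2$ with $0\leq y_i\leq x_i$, whence $Ty=Ty_1+Ty_2\leq S(x_1)+S(x_2)$. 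I regard this additivity as the technical heart of the proof. Kantorovich's extension lemma then upgrades the additive, positively homogeneous map $S\colon\pos{\vl}\to\pos{\vltwo}$ to a unique positive linear operator $\pos{T}$ on $\vl$. Since $S(x)\geq Tx$ and $S(x)\geq 0$ for $x\in\pos{\vl}$, we obtain $\pos{T}\geq T$ and $\pos{T}\geq 0$, so that $T=\pos{T}-(\pos{T}-T)$ exhibits $T$ as a difference of positive operators; this proves $\orderboundedops{\vl,\vltwo}=\regularops{\vl,\vltwo}$.

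Next I would show that $\pos{T}$ is the least upper bound of $T$ and $0$ in the ordered vector space $\orderboundedops{\vl,\vltwo}$: if $R\geq T$ and $R\geq 0$, then for $0\leq y\leq x$ we have $Rx\geq Ry\geq Ty$, so $Rx\geq S(x)$ and hence $R\geq\pos{T}$. As $T\vee 0$ thus exists for every $T$, the standard criterion that an ordered vector space admitting all suprema $u\vee 0$ is a vector lattice shows that $\orderboundedops{\vl,\vltwo}$ is a vector lattice, with $\abs{T}=\pos{T}+\nega{T}$ and $\nega{T}=\pos{(-T)}$. The Riesz--Kantorovich formulae \eqref{eq:RK2} and \eqref{eq:RK3} then drop out of the operator identity $S\vee T=\pos{(S-T)}+T$, and its dual, upon substituting $z=x-y$ in the defining supremum for $\pos{(S-T)}$; formula \eqref{eq:RK1} follows from $\abs{T}=\pos{T}+\nega{T}$ after the short observation that $\sup\desset{\abs{Ty}:\abs{y}\leq x}$ coincides with the expression so obtained for $\abs{T}(x)$, using that $\abs{y}\leq x$ is equivalent to writing $y=y_1-y_2$ with $y_1,y_2\in[0,x]$.

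Finally, for Dedekind completeness of $\regularops{\vl,\vltwo}$, I would take a non-empty subset that is bounded above, replace it by the upward directed set of its finite suprema, now available since we have a vector lattice, and bound it above by some $R$. For such a directed family $\desset{T_\alpha}$ I would set $S(x)\coloneqq\sup_\alpha T_\alpha x$ for $x\in\pos{\vl}$, which exists in $\vltwo$ by Dedekind completeness, and verify additivity on the cone; here the non-trivial inequality again uses directedness, to find a single index dominating near-optimal choices for $x_1$ and $x_2$ simultaneously. Extending $S$ as before then yields the supremum of the family. The recurring obstacle throughout is precisely this additivity-on-the-cone step: both in constructing $\pos{T}$ and in constructing suprema of directed families, one must pass from a merely pointwise supremum to a genuinely additive map, and it is there that the decomposition property of the domain and the directedness of the family do the real work.
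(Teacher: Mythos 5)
Your proof is correct and is essentially the standard Riesz--Kantorovich argument: the paper itself gives no proof but cites Aliprantis--Burkinshaw and Zaanen, and your construction of $\pos{T}$ via $\sup\desset{Ty:0\leq y\leq x}$, additivity on the cone through the Riesz decomposition property, Kantorovich extension, and the directed-family argument for Dedekind completeness is precisely the proof found there. (Incidentally, the right-hand sides of \eqref{eq:RK2} and \eqref{eq:RK3} in the statement should read $Sy+Tz$ rather than $Sy+Tx$, as your derivation correctly produces.)
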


The formulae in the above theorem are the \emph{Riesz--Kantorovich formulae}.

Applying the theorem with $\vltwo=\RR$, we see that the order bounded linear functionals on $\vl$ coincide with the regular ones, and that they form a vector lattice. This vector lattice is denoted by $\vlod$, and it is called the \emph{order dual of $\vl$}. Of course, for $\ode\in\vlod$, we have $\ode\geq 0$ if and only if $\pairing{\ode,x}\geq 0$ for all $x\in\pos{E}$.

Suppose that $\vl$ and $\vltwo$ are vector lattices. A linear operator $T:\vl\to\vltwo$ is a \emph{lattice homomorphism} if $T(x\vee y)=Tx\vee Ty$ for all $x,y\in\vl$. This is equivalent to requiring that $T(x\wedge y)=Tx\wedge Ty$ for all $x,y\in\vl$, and also equivalent to requiring that $\abs{Tx}=T\abs{x}$ for all $x\in\vl$. Lattice homomorphisms are positive linear operators.

A linear operator $T:\vl\to\vltwo$ is \emph{interval preserving} if it is positive and such that $T([0,x])=[0,Tx]$ for all $x\in\pos{\vl}$. The positivity of $T$ already implies that $T([0,x])\subseteq[0,Tx]$; the point is that equality should hold.

Let $T:\vl\to\vltwo$ be an order bounded linear operator. Then its \emph{order adjoint} $\oadj{T}:\od{\vltwo}\to\vlod$ is defined by setting
\[
\pairing{\oadj{T}\ode,x}\coloneqq \pairing{\ode,Tx}
\]
for $x\in\vl$ and $\ode\in\vlod$. In Section~\ref{sec:locally_compact_spaces}, we shall use the following two results; see \cite[Theorems~2.19 and~2.20]{aliprantis_burkinshaw_POSITIVE_OPERATORS_SPRINGER_REPRINT:2006}.

\begin{proposition}\label{res:dual_is_lattice_homomorphism}
	Let $T:\vl\to\vltwo$ be an interval preserving linear operator between the vector lattices $\vl$ and $\vltwo$. Then $\oadj{T}:\od{\vltwo}\to\vlod$ is a lattice homomorphism.
\end{proposition}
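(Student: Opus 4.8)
The plan is to verify the lattice-homomorphism property through the modulus, i.e.\ to prove that $\abs{\oadj{T}\ode}=\oadj{T}\abs{\ode}$ for every $\ode\in\od{\vltwo}$. First I would record the preliminaries: since $T$ is positive it is order bounded, so $\oadj{T}$ is well defined, and $\oadj{T}$ is itself positive because $\pairing{\oadj{T}\ode,x}=\pairing{\ode,Tx}\geq 0$ whenever $\ode\geq 0$ and $x\in\pos{\vl}$, using that $Tx\in\pos{\vltwo}$. Both order duals $\vlod$ and $\od{\vltwo}$ are vector lattices by \cref{res:riesz_kantorovich} applied with codomain $\RR$. As $\oadj{T}$ is positive, the inequality $\abs{\oadj{T}\ode}\leq\oadj{T}\abs{\ode}$ holds automatically, so the real content lies in the reverse inequality.

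To compare the two functionals, I would evaluate them at an arbitrary $x\in\pos{\vl}$ and invoke the Riesz--Kantorovich formula \eqref{eq:RK1} for the modulus of a functional. Since the order interval $\desset{z\in\vltwo:\abs{z}\leq Tx}$ is symmetric under $z\mapsto -z$, one may drop the outer absolute value, and that formula yields
\[
\pairing{\oadj{T}\abs{\ode},x}=\pairing{\abs{\ode},Tx}=\sup\desset{\pairing{\ode,z}:z\in\vltwo,\ \abs{z}\leq Tx},
\]
while on the other side
\[
\pairing{\abs{\oadj{T}\ode},x}=\sup\desset{\pairing{\oadj{T}\ode,w}:w\in\vl,\ \abs{w}\leq x}=\sup\desset{\pairing{\ode,Tw}:w\in\vl,\ \abs{w}\leq x}.
\]
Everything then reduces to comparing the two sets of real values over which these suprema are taken.

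The key step, and the only place where the interval-preserving hypothesis enters, is the claim that
\[
\desset{Tw:w\in\vl,\ \abs{w}\leq x}=\desset{z\in\vltwo:\abs{z}\leq Tx}.
\]
The inclusion ``$\subseteq$'' is mere positivity: if $\abs{w}\leq x$, then $\abs{Tw}\leq T\abs{w}\leq Tx$. For ``$\supseteq$'', given $z$ with $\abs{z}\leq Tx$, I would set $u\coloneqq (z+Tx)/2$, so that $0\leq u\leq Tx$, that is, $u\in[0,Tx]$. By the interval-preserving property $[0,Tx]=T([0,x])$, so there exists $v\in[0,x]$ with $Tv=u$; putting $w\coloneqq 2v-x$ gives $\abs{w}\leq x$ and $Tw=2Tv-Tx=2u-Tx=z$. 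This surjectivity of $T$ from $[0,x]$ onto the interval $[0,Tx]$ is the crux of the argument and the genuine obstacle: it is precisely the strengthening of plain positivity furnished by the hypothesis, and without it only the one-sided inequality survives.

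With the two index sets shown to coincide, the corresponding suprema are equal, so $\pairing{\oadj{T}\abs{\ode},x}=\pairing{\abs{\oadj{T}\ode},x}$ for every $x\in\pos{\vl}$. Hence $\oadj{T}\abs{\ode}=\abs{\oadj{T}\ode}$ for all $\ode\in\od{\vltwo}$, and therefore $\oadj{T}$ is a lattice homomorphism.
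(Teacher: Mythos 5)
Your proof is correct. The paper does not prove this proposition itself but only cites \cite[Theorem~2.19]{aliprantis_burkinshaw_POSITIVE_OPERATORS_SPRINGER_REPRINT:2006}, and your argument is essentially the standard one given there: the Riesz--Kantorovich formula for the modulus of a functional, combined with the observation that interval preservation yields $T(\lrdesset{w:\abs{w}\leq x})=\lrdesset{z:\abs{z}\leq Tx}$, which you establish correctly via the affine change of variables $u=(z+Tx)/2$.
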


\begin{proposition}\label{res:dual_is_interval_preserving}
	Let $T:\vl\to\vltwo$ be a positive linear operator between the vector lattices $\vl$ and $\vltwo$, where $\vltwo$ is such that $\od{\vltwo}$ separates the points of $\vltwo$. Then $T$ is a lattice homomorphism if and only if $\oadj{T}:\od{\vltwo}\to\vlod$ is interval preserving.
\end{proposition}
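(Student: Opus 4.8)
The plan is to prove the two implications separately. Note first that $\oadj{T}$ is automatically positive, since for $\phi\in\pos{(\od{\vltwo})}$ and $x\in\pos{\vl}$ we have $\pairing{\oadj{T}\phi,x}=\pairing{\phi,Tx}\geq 0$ because $Tx\geq 0$. In particular $\oadj{T}([0,\phi])\subseteq[0,\oadj{T}\phi]$ for every $\phi\in\pos{(\od{\vltwo})}$, so that the interval-preserving property of $\oadj{T}$ amounts precisely to the reverse inclusion $[0,\oadj{T}\phi]\subseteq\oadj{T}([0,\phi])$.

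For the implication that $T$ being a lattice homomorphism makes $\oadj{T}$ interval preserving, fix $\phi\in\pos{(\od{\vltwo})}$ and $\psi\in\vlod$ with $0\leq\psi\leq\oadj{T}\phi$; the goal is to find $\theta\in[0,\phi]$ with $\oadj{T}\theta=\psi$. First I would define $\theta$ on the range $T(\vl)$ by $\theta(Tx)\coloneqq\pairing{\psi,x}$. This is well defined precisely because $T$ is a lattice homomorphism: if $Tx=0$, then $T\abs{x}=\abs{Tx}=0$, whence $\abs{\pairing{\psi,x}}\leq\pairing{\psi,\abs{x}}\leq\pairing{\oadj{T}\phi,\abs{x}}=\pairing{\phi,T\abs{x}}=0$. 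The heart of the argument is then to extend $\theta$ to all of $\vltwo$ without leaving the interval $[0,\phi]$. I would carry this out with a Hahn--Banach argument using the sublinear functional $p(f)\coloneqq\pairing{\phi,\pos{f}}$ on $\vltwo$, whose subadditivity follows from $\pos{(f+g)}\leq\pos{f}+\pos{g}$ together with $\phi\geq 0$. On $T(\vl)$ one has the domination $\theta(Tx)=\pairing{\psi,x}\leq\pairing{\psi,\pos{x}}\leq\pairing{\oadj{T}\phi,\pos{x}}=\pairing{\phi,T\pos{x}}=\pairing{\phi,\pos{(Tx)}}=p(Tx)$, where the equality $T\pos{x}=\pos{(Tx)}$ again uses that $T$ is a lattice homomorphism. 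Hence $\theta\leq p$ on $T(\vl)$, and Hahn--Banach produces an extension $\widetilde{\theta}\colon\vltwo\to\RR$ with $\widetilde{\theta}\leq p$. Testing this inequality at $-f$ and at $f$ for $f\in\pos{\vltwo}$ yields $\widetilde{\theta}\geq 0$ and $\widetilde{\theta}\leq\phi$, so $\widetilde{\theta}\in[0,\phi]$ in $\od{\vltwo}$; and $\pairing{\oadj{T}\widetilde{\theta},x}=\pairing{\widetilde{\theta},Tx}=\pairing{\psi,x}$ shows $\oadj{T}\widetilde{\theta}=\psi$, as required.

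For the converse, that $\oadj{T}$ being interval preserving forces $T$ to be a lattice homomorphism, I would pass to the order biduals. Let $J_{\vl}\colon\vl\to\od{\vlod}$ and $J_{\vltwo}\colon\vltwo\to\od{\od{\vltwo}}$ be the canonical evaluation maps; unwinding the definition of the order adjoint gives the commutation relation $\oadj{(\oadj{T})}\circ J_{\vl}=J_{\vltwo}\circ T$. If $\oadj{T}$ is interval preserving, then \cref{res:dual_is_lattice_homomorphism}, applied to $\oadj{T}\colon\od{\vltwo}\to\vlod$, shows that its order adjoint $\oadj{(\oadj{T})}$ is a lattice homomorphism. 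Recalling the standard fact that the canonical embedding of a vector lattice into its order bidual is a lattice homomorphism, it follows that $J_{\vltwo}\circ T=\oadj{(\oadj{T})}\circ J_{\vl}$ is a composition of lattice homomorphisms, hence a lattice homomorphism. This is the only point at which the hypothesis is used: since $\od{\vltwo}$ separates the points of $\vltwo$, the embedding $J_{\vltwo}$ is injective and therefore a lattice \emph{embedding}, so the identity $J_{\vltwo}(T(x\vee y))=J_{\vltwo}(Tx\vee Ty)$ can be cancelled to give $T(x\vee y)=Tx\vee Ty$ for all $x,y\in\vl$.

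I expect the main obstacle to be the forward implication, and within it the extension step: the lattice-homomorphism hypothesis has to be converted into exactly the well-definedness and domination estimates that let the Hahn--Banach extension with $p(f)=\pairing{\phi,\pos{f}}$ land inside $[0,\phi]$ and satisfy $\oadj{T}\widetilde{\theta}=\psi$. By contrast, the converse is essentially formal once \cref{res:dual_is_lattice_homomorphism} and the functoriality of the order adjoint are in hand, with the separation hypothesis entering only to make the bidual embedding injective.
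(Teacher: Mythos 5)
Your proof is correct. The paper does not prove this proposition itself\textemdash it simply cites \cite[Theorem~2.20]{aliprantis_burkinshaw_POSITIVE_OPERATORS_SPRINGER_REPRINT:2006}\textemdash and your argument (Hahn--Banach extension against the sublinear functional $p(f)=\pairing{\phi,\pos{f}}$ for the forward direction, and passage to order biduals via \cref{res:dual_is_lattice_homomorphism} together with the injectivity of the canonical embedding for the converse) is essentially the standard proof given in that reference.
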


Let $\vl$ be a vector lattice. Then a norm $\norm{\,\cdot\,}$ on $\vl$ is a \emph{lattice norm} if $\norm{x}\leq\norm{y}$ whenever $x$ and $y$ in $\vl$ are such that $\abs{x}\leq\abs{y}$.

\begin{definition} A Banach space $(\vl,\norm{\,\cdot\,})$ for which $\vl$ is a vector lattice and $\norm{\,\cdot\,}$ is a lattice norm is a
	\emph{Banach lattice}.
\end{definition}

\begin{example}\label{ex:spaces_of_continuous_functions_two}
	Let $\ts$ be a topological space. Then the vector lattices $\contb{(\ts,\RR)}$ and $\conto(\ts,\RR)$ from \cref{ex:spaces_of_continuous_functions_one} are Banach lattices when supplied with the uniform norm $\norm{\,\cdot\,}_\infty$.
\end{example}

\begin{example}\label{ex:Ellp_spaces_two}
	Let $\ts$ be a non-empty set, let $\borelsalg$ be a $\sigma$-algebra of subsets of $\ts$, and let $\mu:\borelsalg\to[0,\infty]$ be a measure on $\borelsalg$. Then the vector lattices $\Ell^p(\ts,\borelsalg,\mu,\RR)$ from \cref{ex:Ellp_spaces_one}  are Banach lattices when supplied with the usual $p$-norm $\norm{\,\cdot\,}_p$.
\end{example}

\begin{example}\label{ex:spaces_of_measures_two}
	Let $\ts$ be a non-empty set, and let $\borelsalg$ be a $\sigma$-algebra of subsets of $\ts$. Then the vector lattice $\meas(\ts,\borelsalg,\RR)$ of real-valued measures on $\borelsalg$ from \cref{ex:spaces_of_measures_one} is a Banach lattice when supplied with the norm $\norm{\,\cdot\,}$ that is obtained by setting
	\begin{equation}\label{eq:norm_of_a_measure}
	\norm{\mu}\coloneqq\abs{\mu}(\ts).	
	\end{equation}
\end{example}

Let $\vl$ be a Banach lattice. Then $\vl$ has an order dual $\vlod$ as a vector lattice, as well as a topological dual $\vlnd$ as a Banach space. It is a fundamental fact that $\vlod=\vlnd$; see \cite[Corollary~4.4]{aliprantis_burkinshaw_POSITIVE_OPERATORS_SPRINGER_REPRINT:2006} or \cite[Theorem~25.8(iii)]{zaanen_INTRODUCTION_TO_OPERATOR_THEORY_IN_RIESZ_SPACES:1997}, for example.

Suppose that $\vl$ is a Banach lattice, that $\vltwo$ is a normed vector lattice, and that the map $T:\vl\to\vltwo$ is an order bounded linear operator. Then $\vl$ is automatically continuous; see \cite[Theorem~4.3]{aliprantis_burkinshaw_POSITIVE_OPERATORS_SPRINGER_REPRINT:2006}, for example. In the sequel we shall repeatedly use the special case that a positive linear operator from a Banach lattice into a normed vector lattice is automatically continuous.

Let $\vl$ and $\vltwo$ be Banach lattices, where $\vltwo$ is Dedekind complete. Then we know from \cref{res:riesz_kantorovich} that $\regularops{\vl,\vltwo}$ is a Dedekind complete vector lattice. It can be supplied with the operator norm, but this is not generally a lattice norm. One can, however, define the \emph{regular norm} $\regularnorm{\,\cdot\,}$ on $\regularops{\vl,\vltwo}$ by setting \[
\regularnorm{T}\coloneqq\norm{\abs{T}}
\]
for $T\in\regularops{\vl,\vltwo}$. The regular norm is a lattice norm on $\regularops{\vl,\vltwo}$, and $\regularops{\vl,\vltwo}$ is then a Dedekind Banach lattice; see \cite[Theorem~4.74]{aliprantis_burkinshaw_POSITIVE_OPERATORS_SPRINGER_REPRINT:2006}, for example.

\section{Complex Banach lattices}\label{sec:complex_banach_lattices}

\noindent
In abstract harmonic analysis, Banach spaces and Banach algebras are almost always over the complex numbers. It is for this reason that we include the following material on complex Banach lattices. Details can be found in \cite[Section~3.2]{abramovich_aliprantis_INVITATION_TO_OPERATOR_THEORY:2002}, \cite[Section~2.2]{meyer-nieberg_BANACH_LATTICES:1991}, or
\cite[Section~2.11]{schaefer_BANACH_LATTICES_AND_POSITIVE_OPERATORS:1974}, for example.

Let $\vl$ be a Banach lattice. Then its complexified vector space $\compvl$ can be supplied with a modulus $\compabs{\,\cdot\,}:\compvl\to\vl$. The definition of $\compabs{\,\cdot\,}$ is analogous to one of the possible descriptions of the modulus of a complex number, as follows. For $x,y\in\vl$, the supremum
\[
\sup\desset{\Re(\el^{\il\theta}(x+\il y)):0\leq\theta\leq 2\pi}=\sup\desset{x \cos\theta+y\sin\theta:0\leq\theta\leq 2\pi}
\]
can be shown to exist in $\vl$, and we define this supremum to be the modulus $\compabs{x+\il y}$ of the element $x+\il y$ of $\compvl$. Then $\compabs{\,\cdot\,}$ extends the modulus $\abs{\,\cdot\,}$ on $\vl$. Take $z\in\compvl$. Then $\compabs{z}=0$ if and only if $z=0$. Furthermore, $\compabs{\alpha z}=\abs{\alpha}\compabs{z}$ for all $\alpha\in\CC$ and $z\in\compvl$, and $\compabs{w+z}\leq\compabs{w}+\compabs{z}$ for all $w,z\in\compvl$.

Set $\compnorm{z}\coloneqq\norm{\compabs{z}}$ for $z\in\compvl$. Then $\compnorm{\,\cdot\,}$ is a norm on $\compvl$ that extends the norm on $\vl$, and $(\compvl,\compnorm{\,\cdot\,})$ is a complex Banach space that is called a \emph{complex Banach lattice}.  As a topological vector space, $\compvl$ is $\RR$-linearly homeomorphic to the Cartesian product $\vl\times\vl$. One of the things to remember is that the non-zero complex Banach lattices are not lattices: they do have a modulus, but there is no r{\^o}le for a partial ordering on $\compvl$ as a whole.

\begin{example}\label{ex:spaces_of_continuous_functions_three}
	Let $\ts$ be a topological space. Then the complexifications of the Banach lattice $\contb{(\ts,\RR)}$, respectively, $\conto(\ts,\RR)$, from \cref{ex:spaces_of_continuous_functions_two} can be identified with the Banach space $\contb{(\ts,\CC)}$, respectively, $\conto(\ts,\CC)$, with the usual pointwise complex modulus and with the uniform norm $\norm{\,\cdot\,}_\infty$.
\end{example}

\begin{example}\label{ex:Ellp_spaces_three}
	Let $\ts$ be a non-empty set, let $\borelsalg$ be a $\sigma$-algebra of subsets of $\ts$, and let $\mu:\borelsalg\to[0,\infty]$ be a measure on $\borelsalg$. Then the complexifications of the Banach lattices $\Ell^p(\ts,\borelsalg,\mu,\RR)$ from \cref{ex:Ellp_spaces_two} can be identified with the Banach spaces $\Ell^p(\ts,\borelsalg,\mu,\CC)$, with the usual pointwise $\mu$-almost everywhere complex modulus and with the usual $p$-norm $\norm{\,\cdot\,}_p$.
\end{example}

\begin{example}\label{ex:spaces_of_measures_three}
	Let $\ts$ be a non-empty set, and let $\borelsalg$ be a $\sigma$-algebra of subsets of $\ts$. Then the complexification of the Banach lattice $\meas(\ts,\borelsalg,\RR)$ of real-valued measures on $\borelsalg$ from \cref{ex:spaces_of_measures_two} can be identified with the Banach space $\meas(\ts,\borelsalg,\CC)$ of complex-valued measures on $\borelsalg$, where the modulus, respectively, the norm, is again given by \cref{eq:modulus_of_a_measure}, respectively, \cref{eq:norm_of_a_measure}.
\end{example}

Let $\vl$ and $\vltwo$ be Banach lattices, and let $T:\vl\to\vltwo$ be a bounded linear operator. Then its complex-linear extension $\comp{T}:(\compvl,\compnorm{\,\cdot\,})\to(\compvltwo,\compnorm{\,\cdot\,})$ is a bounded linear operator, and $\norm{T}\leq\norm{\comp{T}}\leq2\norm{T}$. If $T\geq 0$, then $\norm{\comp{T}}=\norm{T}$.

Let $\compvl$ and $\compvltwo$ be complex Banach lattices. Then every complex-linear operator $T:\compvl\to\compvltwo$ has a unique expression as $T=S_1+\il S_2$, where $S_1,S_2:\vl\to\vltwo$ are real-linear operators, and
\[
(S_1+\il S_2)(x+\il y)=(Sx-Ty)+\il(Sy+Tx)
\]
for $x,y\in\vl$. Then $T$ is \emph{order bounded} (respectively, \emph{regular}) if both $S_1$ and $S_2$ are order bounded (respectively, regular). The complex vector space of all order bounded (respectively, regular) complex-linear operators from $\compvl$ into $\compvltwo$ is denoted by $\orderboundedops{\compvl,\compvltwo}$ (respectively, $\regularops{\compvl,\compvltwo}$). Then $\regularops{\compvl,\compvltwo}\subseteq\orderboundedops{\compvl,\compvltwo}\subseteq\bounded(\compvl,\compvltwo)$. A complex-linear operator $T:\compvl\to\compvltwo$ is \emph{positive} if $T(\pos{\vl})\subseteq\pos{\vltwo}$; this implies that $T(\vl)\subseteq\vltwo$. For such positive $T$, we have $\compabs{Tz}\leq T(\compabs{z})$ for $z\in\compvl$.
A complex-linear operator $T:\compvl\to\compvltwo$ is a \emph{complex lattice homomorphism} if $\compabs{Tz}=T\left(\compabs{z}\right)$ for all $z\in\compvl$. This is the case if and only if $T$ leaves $\vl$ invariant and the restricted map $T\mid_E:\vl\to\vl$ is a lattice homomorphism; see \cite[p.~136]{schaefer:1960}.

Let $\vl$ and $\vltwo$ be Banach lattices, where $\vltwo$ is Dedekind complete. Then the space $(\regularops{\vl,\vltwo},\regularnorm{\,\cdot\,})$ is a Dedekind complete Banach lattice, so that we can consider the complex Banach lattice $(\comp{[\regularops{\vl,\vltwo}]},\norm{\,\cdot\,}_{\tfs{r},\CC})$. For $T\in\ \comp{[\regularops{\vl,\vltwo}]}$, we have, by definition, that \[
\norm{T}_{\tfs{r},\CC}=\regularnorm{\compabs{T}}=\norm{\compabs{T}},
\]
and then the norm $\norm{\,\cdot\,}_{\tfs{r},\CC}$ on $\comp{[\regularops{\vl,\vltwo}]}$ extends the norm $\regularnorm{\,\cdot\,}$ on $\regularops{\vl,\vltwo}$. It is clear from the definitions that $\regularops{\compvl,\compvltwo}$ and $\comp{[\regularops{\vl,\vltwo}]}$ can be identified as complex vector spaces. Let $T\in\regularops{\compvl,\compvltwo}$. Then, viewing $T$ as an element of $\comp{[\regularops{\vl,\vltwo}]}$, so that $\compabs{T}$ is defined in $\comp{[\regularops{\vl,\vltwo}]}$, and viewing $\compabs{T}$ as an element of $\regularops{\compvl,\compvltwo}$ again, we have
\[
\compabs{T}x=\sup\lrdesset{\compabs{Tz}: z\in\compvl, \,\compabs{z}\leq x}
\]
for all $x\in\pos{\vl}$, and
\begin{equation}\label{eq:complex_inequality}
\compabs{Tz}\leq\compabs{T}\compabs{z}
\end{equation}
for all $z\in\compvl$.

Let $(\vl,\norm{\,\cdot\,})$ be a Banach lattice with dual Banach lattice $(\nd{\vl},\nd{\norm{\,\cdot\,}})$.
It follows from \cref{eq:complex_inequality} that the norm dual of the complex Banach lattice  $(\comp{\vl},\compnorm{\,\cdot\,})$ is canonically isometrically isomorphic as a complex Banach space to the complex Banach lattice $\left(\comp{\left(\nd{\vl}\right)},\comp{\left(\norm{\,\cdot\,}^\prime\right)}\right)$. In particular, analogously to the case of real scalars, the norm dual of a complex Banach lattice is again a complex Banach lattice.

\section{Banach algebras and Banach lattice algebras}\label{sec:banach_lattice_algebras}

\noindent
In this section, we shall review some material about Banach algebras, Banach lattice algebras, and their complex versions.

A \emph{Banach algebra} (respectively, a \emph{complex Banach algebra}) is a pair $(\alg,\norm{\,\cdot\,})$, where $\alg$ is an algebra (respectively, a complex algebra) with a norm $\norm{\,\cdot\,}$ such that $(\alg,\norm{\,\cdot\,})$ is a Banach space (respectively, a complex Banach space) and
\[
\norm{a_1a_2}\leq\norm{a_1}\norm{a_2}
\]
for $a_1,a_2\in\alg$. An identity element, if present, need not have norm 1. A net $(a_i)_{i\in I}$ in $\alg$ is an \emph{approximate identity} if $\lim_i a_ia=\lim_i aa_i=a$ for all $a\in\alg$. If, in addition, $\norm{a_i}\leq 1$ for all $i\in I$, then the approximate identity $(a_i)_{i\in I}$ is \emph{contractive}.

Let $\alg$ and $\algtwo$ be Banach algebras. Then a map $\rep:\alg\to\algtwo$ is a \emph{Banach algebra homomorphism} if it is a continuous algebra homomorphism. The notion of a \emph{complex Banach algebra homomorphism} between two complex Banach algebras is similarly defined.

For an introduction to the theory of complex Banach algebras, see \cite{allan_INTRODUCTION_TO_BANACH_SPACES_AND_ALGEBRAS:2011}, for example; a more substantial account is given in \cite{dales_BANACH_ALGEBRAS_AND_AUTOMATIC_CONTINUITY:2000}. As long as one does not move into topics where working over the complex field is manifestly essential\textemdash the latter actually constitute most of the theory\textemdash several of the (more basic) results about complex Banach algebras are obviously also true for Banach algebras.

Canonical examples of Banach algebras are $\bounded(\vl)$, where $\vl$ is a Banach space, and $\contbtsR$ and $\contotsR$, where $\ts$ is a topological space and where the norm on both algebras is the supremum-norm $\norm{\,\cdot\,}_\infty$. Examples of complex Banach algebras are obtained likewise.

In Section~\ref{sec:locally_compact_groups}, we shall give examples of Banach algebras and complex Banach algebras on locally compact groups that involve convolution.

\smallskip

Let $\alg$ be a complex algebra. A proper left ideal $I$ in $\alg$ is \emph{modular} if there exists  $u\in \alg$ with $a-au\in I$ for all $a\in \alg$.
The family of modular left ideals in $\alg$ (if non-empty) has maximal members, and the \emph{\uppars{Jacobson} radical of $\alg$} is the intersection of the maximal modular left ideals of $\alg$ \cite[Section~1.5]{dales_BANACH_ALGEBRAS_AND_AUTOMATIC_CONTINUITY:2000}; it is denoted by  ${\rm rad\,} \alg$, where we set ${\rm rad\,} \alg\coloneqq\alg$ when $\alg$ has no
maximal modular left ideals. In fact, ${\rm rad\,} \alg $ is a (two-sided)  ideal in $\alg$.  The complex algebra $\alg$ is \emph{semisimple} when  ${\rm rad\,} \alg=\{0\}$ and \emph{radical} when ${\rm rad\,} \alg=\alg$.

Let $\alg$ be a complex Banach algebra. Then ${\rm rad\,} \alg $ is closed in $\alg$, and $\alg/{\rm rad\,} \alg $ is a semisimple complex Banach algebra.  An element $a\in \alg$ is \emph{quasi-nilpotent} if $\lim_{n\to \infty} \norm{a^n}^{1/n}=0$.   Each quasi-nilpotent element belongs to ${\rm rad\,} \alg $,
and ${\rm rad\,} \alg $ is equal to the set of quasi-nilpotent elements in the special case that $\alg$ is commutative.

\smallskip

Banach lattice algebras combine the structures of Banach lattices and of Banach algebras. Their definition in the present article is as follows.

\begin{definition}
Let $\alg$ be a Banach lattice that is also a Banach algebra such that the product of two positive elements is again positive. Then $\alg$ is a \emph{Banach lattice algebra}.
\end{definition}

We note that the norm on a Banach lattice algebra is compatible with both the order and product.

There are further remarks concerning the definition of a Banach lattice algebra, in particular involving the r\^ole of an identity, in \cite{wickstead:2017b}. In the present article, we leave this unspecified: the algebra need not be unital, nor need an identity element, if present, be positive.

As compared to the general theory of Banach algebras or operator algebras the theory of Banach lattice algebras is largely undeveloped. We refer to \cite{wickstead:2017b,wickstead:2017c} for a survey and for open problems. Problems~6 and~7 in \cite{wickstead:2017c} are resolved by \cref{res:action_of_Ellone_on_Ellp,res:left_regular_representation_of_measures}, respectively, in the present article.

Let $\alg$ be a Banach lattice algebra, and take $a_1,a_2\in\alg$. By splitting each of $a_1$ and $a_2$ into their positive and negative parts, it follows easily that $\abs{a_1a_2}\leq\abs{a_1}\abs{a_2}$. This holds, in fact, in every so-called \emph{Riesz algebra}, i.e., in every vector lattice that is an algebra with the property that the product of two positive elements is again positive.

\begin{example}
Let $\ts$ be a topological space. Then $\contbtsR$ and $\contotsR$, with the uniform norm and pointwise ordering, are Banach lattice algebras.
\end{example}

\begin{example}
	Let $\vl$ be a Dedekind complete Banach lattice. Then $\regularops{\vl}$ is a Dedekind complete Banach lattice and also an algebra. It is, in fact, a Riesz algebra. Since then $\abs{T_1T_2}\leq\abs{T_1}\abs{T_2}$ for $T_1,T_2\in\regularops{\vl}$, it follows that the regular norm $\regularnorm{\,\cdot\,}$ is submultiplicative on $\regularops{\vl}$. Hence $(\regularops{\vl},\regularnorm{\,\cdot\,})$ is a Dedekind complete Banach lattice algebra.
\end{example}

In Section~\ref{sec:locally_compact_groups}, we shall define the group algebra and the measure algebra of a locally compact group. These Banach algebras are Banach lattice algebras.

\begin{definition}\label{def:banach_lattice_algebra_homomorphism}
	Let $\alg$ and $\algtwo$ be Banach lattice algebras. Then a map $\rep:\alg\to\algtwo$ is a \emph{Banach lattice algebra homomorphism} if $\rep$ is a Banach algebra homomorphism as well as a lattice homomorphism.
\end{definition}

Banach algebra homomorphisms are supposed to be continuous. However, since Banach lattice algebra homomorphisms are, in particular, positive linear maps between Banach lattices, their continuity is, in fact, already automatic.

\begin{definition}\label{def:banach_lattice_algebra_representation}
Let $\alg$ be a Banach lattice algebra, and let $\vl$ be a Dedekind complete Banach lattice. Suppose that $\rep:\alg\to\regularops{\vl}$ is a Banach lattice algebra homomorphism. Then $\rep$ is a \emph{Banach lattice algebra representation of $\alg$ on $\vl$}.
\end{definition}

Let $\alg$ be a Banach algebra. Then the \emph{left regular representation of $\alg$} is the map $\leftreg:\alg\to\bounded(A)$ that is obtained by setting $\leftreg(a_1)a_2\coloneqq a_1a_2$ for $a_1,a_2\in\alg$. The left regular representation of a complex Banach algebra is similarly defined.

Let $\alg$ be a Dedekind complete Banach lattice algebra. Since $\alg=\pos{\alg}-\nega{\alg}$, it follows that the left regular representation $\leftreg$ of $\alg$ is, in fact, a positive algebra homomorphism $\leftreg:\alg\to\regularops{\alg}\subseteq\bounded(A)$ from $\alg$ into the regular operators on $\alg$. Since $\alg$ is a Dedekind complete Banach lattice, it is a meaningful question whether the left regular representation $\leftreg$ of $\alg$ as a Banach algebra is, in fact, a Banach lattice algebra representation of $\alg$ on itself. That is, is the map $\leftreg:\alg\to\regularops{\alg}$ a lattice homomorphism? This question is raised in \cite[Problem ~1]{wickstead:2017c}. In \cref{rem:left_regular_representation_overview}, below, we summarise what is known to us.

\smallskip

We shall now introduce complex Banach lattice algebras.

Let $\alg$ be a Banach lattice algebra with norm $\norm{\,\cdot\,}$. Applying the general procedure for the complexification of a Banach lattice, one obtains the complex Banach lattice $(\compalg,\compnorm{\,\cdot\,})$. Furthermore, $\compalg$ is also a complex algebra. It is a non-trivial fact that $\compabs{z_1z_2}\leq\compabs{z_1}\compabs{z_2}$ for all $z_1,z_2\in\comp{A}$. We refer to \cite[Lemma~1.5]{arendt_THESIS_TUEBINGEN:1979} or \cite[Satz~1.1]{scheffold:1980} for a proof of this result, which was later generalised to arbitrary Archimedean relatively uniformly complete Riesz algebras in \cite{huijsmans:1985}.  The submultiplicativity of the lattice norm $\norm{\,\cdot\,}$ on $\alg$ then immediately implies that $\compnorm{z_1z_2}\leq\compnorm{z_1}\compnorm{z_2}$ for $z_1,z_2\in\comp{A}$. Hence the complex Banach space $(\compalg, \compnorm{\,\cdot\,})$ is also a complex Banach algebra. The complex Banach space $(\compalg, \compnorm{\,\cdot\,})$, with its structures of a complex Banach lattice and of a complex Banach algebra, is a \emph{complex Banach lattice algebra}.

\begin{example}\label{ex:complex_BLA_of_continuous_functions}
	Let $\ts$ be a topological space. Complexification of the Banach lattice algebra $(\contotsR,\norm{\,\cdot \,}_\infty)$, respectively, $(\contbtsR,\norm{\,\cdot \,}_\infty)$, yields the complex Banach lattice algebra $(\contotsC,\norm{\,\cdot \,}_\infty)$, respectively, $(\contbtsC,\norm{\,\cdot\,}_\infty)$.
\end{example}

\begin{example}\label{ex:complex_BLA_of_operators}
Let $\vl$ be a Dedekind complete Banach lattice. Then $(\regularops{\vl},\regularnorm{\,\cdot\,})$ is a Banach lattice algebra, and complexification yields the complex Banach lattice algebra $(\regularops{\compvl},\norm{\,\cdot\,}_{\tfs{r},\CC})$.
\end{example}

As we shall see later in Section~\ref{sec:locally_compact_groups}, the complex group algebra (respectively, the complex measure algebra) of a locally compact group can be identified, as a complex algebra, with the complexification of the group algebra (respectively, the measure algebra) of the group. It is not difficult to see that the usual norms on these two complex Banach algebras coincide with the norms they obtain as complexifications of the pertinent Banach lattice algebras. Hence the complex group algebra and the complex measure algebra of a locally compact group, with the usual norm, are both complex Banach lattice algebras.

\begin{remark}
It is possible to complexify arbitrary Banach algebras. Indeed, suppose that $\alg$ is a Banach algebra. Then the algebraic complexification $\compalg$ can be given a norm $\compnorm{\,\cdot\,}$ such that $(\compalg,\compnorm{\,\cdot\,})$ is a complex Banach algebra and the natural embedding $a\mapsto(a,0)$ from $\alg$ into $\compalg$ is an isometry. Furthermore, all norms on $\compalg$ with this property are equivalent. We refer to \cite[Theorem~1.3.2]{rickart_GENERAL_THEORY_OF_BANACH_ALGEBRAS:1960} for these results.

There is, in fact, an explicit construction of such a norm in \cite{rickart_GENERAL_THEORY_OF_BANACH_ALGEBRAS:1960}. It would be interesting to investigate whether, for the complexifications of the Banach lattice algebras in the present article, this particular norm in \cite{rickart_GENERAL_THEORY_OF_BANACH_ALGEBRAS:1960} coincides with the norm as found above via the complexification of Banach lattices. If this were even true for general Banach lattice algebras, then this would yield an alternative proof of the submultiplicativity of the norm found via the complexifications of Banach lattices that would not need the results in \cite[Lemma~1.5]{arendt_THESIS_TUEBINGEN:1979},  \cite{huijsmans:1985}, or \cite[Satz~1.1]{scheffold:1980} referred to above.
\end{remark}

\begin{definition}\label{def:complex_banach_lattice_algebra_homomorphism}
Let $\alg$ and $\algtwo$ be Banach lattice algebras. Then a map $\rep:\compalg\to\compalgtwo$ is a \emph{complex Banach lattice algebra homomorphism} if $\rep$ is a complex Banach algebra homomorphism as well as a complex lattice homomorphism.
\end{definition}

Let $\alg$ and $\algtwo$ be Banach lattice algebras. Then a map $\rep:\compalg\to\compalgtwo$ is a complex Banach lattice algebra homomorphism if and only if $\rep$ maps $\alg$ into $\algtwo$ and the restricted map $\rep\mid_\alg:\alg\to\algtwo$ is a Banach lattice algebra homomorphism.

A complex Banach lattice homomorphism is automatically continuous.

\begin{definition}\label{def:complex_banach_lattice_algebra_representation}
Let $\alg$ be a Banach lattice algebra, and let $\vl$ be a Dedekind complete Banach lattice. Suppose that $\rep:\compalg\to\regularops{\compvl}$ is a complex Banach lattice algebra homomorphism. Then $\rep$ is a \emph{complex Banach lattice algebra representation of $\alg$ on $\compvl$}.
\end{definition}

Let $\alg$ be a Banach lattice algebra, and let $\vl$ be a Dedekind complete vector lattice.  Then, by combining \cref{def:banach_lattice_algebra_homomorphism,def:banach_lattice_algebra_representation,def:complex_banach_lattice_algebra_homomorphism,def:complex_banach_lattice_algebra_representation}, we see that a complex algebra homomorphism $\rep:\compalg\to\regularops{\compvl}$ is a complex Banach lattice algebra representation of $\compalg$ on $\compvl$ if and only if $\rep$ maps $\alg$ into $\regularops{\vl}$ and the restricted map $\rep\mid_\alg:\alg\to\regularops{\vl}$ is a Banach lattice algebra representation of $\alg$ on $\vl$.

Let $\alg$ be a Dedekind complete Banach lattice algebra. Then the left regular representation $\leftreg$ of the complex Banach algebra $\compvl$ is a positive algebra homomorphism $\leftreg:\compvl:\to\regularops{\compvl}$. The left regular representation of $\compalg$ is a complex Banach lattice algebra representation of $\compalg$ on itself if and only if the left regular representation of $\alg$ is a Banach lattice algebra representation of $\alg$ on itself.

\smallskip

We mention the following. Let $\alg$ be a complex Banach algebra. Suppose that $^\ast:\alg\to\alg$ is a conjugate-linear map such that $(a^\ast)^\ast=a$ for $a\in\alg$, $(a_1 a_2)^\ast=a_2^\ast a_1^\ast$ for $a_1,a_2\in\alg$, and $\norm{a^\ast}=\norm{a}$ for $a\in\alg$. Then the map $^\ast$ is an \emph{involution on $\alg$}, and $\alg$ is a \emph{complex Banach $^\ast$-algebra}. For complex Banach $^\ast$-algebras, see \cite{palmer_BANACH_ALGEBRAS_AND_THE_GENERAL_THEORY_OF_STAR-ALGEBRAS_VOLUME_I:1994, palmer_BANACH_ALGEBRAS_AND_THE_GENERAL_THEORY_OF_STAR-ALGEBRAS_VOLUME_II:2001}, for example. The theory of $^\ast$-representations of complex Banach $^\ast$-algebras on complex Hilbert spaces is well developed.

In our context, one can consider complex Banach lattice algebras that are also complex Banach $^\ast$-algebras. Examples are $\contotsC$ and $\contbtsC$ for a topological space $\ts$, provided with complex conjugation as involution. The complex group algebra and the complex measure algebra of a locally compact group are other natural examples of complex Banach lattice $^\ast$-algebras. However, there does not seem to be a natural r\^ole for the involution in the representation theory of Banach lattice $^\ast$-algebras. The reason is that the complex Banach lattice algebra $\regularops{\compvl}$, where $\vl$ is a Dedekind complete Banach lattice, does not have a natural involution. It has a natural conjugation, but this preserves the order of the factors in a product of linear operators rather than reverses it.

\section{Locally compact spaces}\label{sec:locally_compact_spaces}

\noindent
In this section, we shall let $\ts$ denote a non-empty, locally compact space. As for all topological spaces in this article, $\ts$ is supposed to be Hausdorff.

We shall be concerned with the order dual $\contctsRod$ of $\contctsR$. As explained in Section~\ref{sec:introduction_and_overview}, the r{\^o}le of $\contctsRod$ in the present article is to be present as a large vector lattice that contains various familiar vector lattices as sublattices; see \cref{res:embedding_of_Lp_into_ordercontcRod,res:embedding_of_M(X)_as_banach_lattice_with_properly_separated_elements}, below, for example.

\smallskip

The first step to be taken is to observe that $\contctsRod$ is equal to the space of real Radon measures on $\ts$ in the sense of Bourbaki \cite{bourbaki_INTEGRATION_VOLUME_I_CHAPTERS_1-6_SPRINGER_EDITION:2004}. This will make a few (not too deep) known results for these Radon measures and their supports available. For this, we shall briefly recall the definition of Bourbaki's Radon measures on $\ts$.

As usual, for a real- or complex-valued function $f$ on $\ts$, the \emph{support of $f$}, denoted by $\supp{f}$, is the closure of the set consisting of those $x\in\ts$ such that $f(x)\neq 0$.

For each non-empty subset $S$ of $\ts$, we let $\contc(\ts,\RR;S)$ denote the set of those $f\in\contctsR$ such that $\supp{f}\subseteq S$. Let $K$ be a non-empty, compact subset of $\ts$. With the uniform norm, $\contc(\ts,\RR;K)$ is a (possibly zero) Banach space. The space $\contctsR$ is the union of the spaces $\contc(\ts,\RR;K)$ as $K$ runs over all non-empty, compact subsets of $\ts$. Consider the family $\mathcal N$ of all absorbing, symmetric, convex subsets $V$ of $\contctsR$ such that $V\cap\contc(\ts,\RR;K)$ is a neighbourhood of $0$ in $\contc(\ts,\RR;K)$ for each non-empty, compact subset $K$ of $\ts$. According to \cite[II, $\mathsection$~4, No.~4, Proposition~5]{bourbaki_TOPOLOGICAL_VECTOR_SPACES_CHAPTERS_1-5_SPRINGER_EDITION:1987}, $\mathcal N$ is a local base at $0$ for a locally convex vector space topology $\mathcal T$ on $\contctsR$. Furthermore, a linear map from $\contctsR$ into a locally convex space is continuous with respect to $\mathcal T$ if and only if its restriction to $\contc(\ts,\RR;K)$ is continuous for each non-empty, compact subset $K$ of $\ts$, and $\mathcal T$ is the only locally convex topology on $\contctsR$ with this property. The topology $\mathcal T$ is also the strongest locally convex topology on $\contctsR$ such that the inclusion map from $\contc(\ts,\RR;K)$ into $\contctsR$ is continuous for each non-empty, compact subset $K$ of $\ts$. The topology $\mathcal T$ on $\contctsR$ is called the \emph{direct limit} or \emph{inductive limit} of the topologies on the spaces $\contc(\ts,\RR;K)$ for non-empty, compact subsets $K$ of $\ts$.

A real \emph{Radon measure on $\ts$} in the sense of Bourbaki is a real-valued linear functional on $\contctsR$ that is continuous with respect to the topology $\mathcal T$ specified above; see \cite[III, $\mathsection$~1, No.~3, Definition~2]{bourbaki_INTEGRATION_VOLUME_I_CHAPTERS_1-6_SPRINGER_EDITION:2004}. In \cite{bourbaki_INTEGRATION_VOLUME_I_CHAPTERS_1-6_SPRINGER_EDITION:2004}, Bourbaki uses the notation $\mathcal{M}(\ts;\mathbf{R})$ for the space of real Radon measures on $\ts$.

An alternative description of $\contctsRod$ is given by the following result. It can already be found in the literature as \cite[paragraph preceding III, $\mathsection$~1, No.~5, Theorem~3]{bourbaki_INTEGRATION_VOLUME_I_CHAPTERS_1-6_SPRINGER_EDITION:2004}, but we thought it worthwhile to make it explicit and also to include the easy proof, as we wish to combine some of the available results on Bourbaki's Radon measures with their lattice structure, which is not as prominent in Bourbaki as we shall need it.

\begin{proposition}\label{res:radon_measures_are_the_order_dual}
	Let $\ts$ be a non-empty, locally compact space. Then $\contctsRod$ is the space  $\mathcal{M}(\ts;\mathbf{R})$ of real Radon measures on $\ts$ in the sense of Bourbaki.
\end{proposition}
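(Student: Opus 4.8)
The plan is to unwind both descriptions into statements about linear functionals on the single space $\contctsR$, and to prove directly that a linear functional $\ode$ on $\contctsR$ is order bounded if and only if it is continuous for the inductive limit topology $\mathcal T$. Since the codomain is $\RR$, in which the order bounded subsets are exactly the norm bounded subsets, order boundedness of $\ode$ means precisely that $\ode$ carries every order interval $[a,b]$ of $\contctsR$ into a bounded subset of $\RR$. On the other side, the characterisation of $\mathcal T$ recalled just above the statement says that $\ode$ is $\mathcal T$-continuous if and only if its restriction to each Banach space $\contc(\ts,\RR;K)$ (with the uniform norm) is continuous, that is, bounded on the unit ball of $\contc(\ts,\RR;K)$, for every non-empty compact $K\subseteq\ts$. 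The proposition thus reduces to matching these two boundedness conditions.

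For the implication that order bounded functionals are $\mathcal T$-continuous, I would fix a non-empty compact set $K$ and use Urysohn's lemma for locally compact Hausdorff spaces to choose $g\in\contctsR$ with $0\leq g\leq 1$ and $g\equiv 1$ on $K$. If $f\in\contc(\ts,\RR;K)$ satisfies $\norm{f}_\infty\leq 1$, then $\abs{f}\leq g$ pointwise: on $K$ this reads $\abs{f}\leq 1=g$, while off $K$ one has $f=0\leq g$. Hence the unit ball of $\contc(\ts,\RR;K)$ is contained in the order interval $[-g,g]$ of $\contctsR$. As $\ode$ is order bounded, it maps $[-g,g]$ into a bounded subset of $\RR$, so $\ode$ is bounded on that unit ball; since $K$ was arbitrary, $\ode$ is $\mathcal T$-continuous.

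For the converse, suppose $\ode$ is $\mathcal T$-continuous and let $[a,b]$ be an order interval of $\contctsR$, with $a\leq b$. Set $K\coloneqq\supp{a}\cup\supp{b}$, a compact set. For $f\in[a,b]$ the inequalities $a\leq f\leq b$ force $f=0$ off $K$ (where $a=b=0$), so that $\supp{f}\subseteq K$; they also give $\abs{f}\leq\abs{a}\vee\abs{b}$ pointwise, whence $\norm{f}_\infty\leq M$ with $M\coloneqq\max(\norm{a}_\infty,\norm{b}_\infty)$. Thus $[a,b]$ lies inside the ball of radius $M$ in $\contc(\ts,\RR;K)$, on which $\ode$ is bounded by hypothesis; therefore $\ode([a,b])$ is bounded, and $\ode$ is order bounded.

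The proof is short, and the only genuine content is the passage between the order structure and the topology. The crux is the Urysohn step in the first implication: the unit ball of $\contc(\ts,\RR;K)$ is not itself an order interval of $\contctsR$, but it can be engulfed by the honest order interval $[-g,g]$ of the ambient space $\contctsR$. This is exactly where local compactness and the Hausdorff property enter, through the existence of a compactly supported dominating function $g$. Once this is in place, both directions are routine bookkeeping with supports and uniform norms.
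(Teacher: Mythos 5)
Your proof is correct, and one half of it takes a genuinely different (and slightly more self-contained) route than the paper's. The direction ``$\mathcal T$-continuous implies order bounded'' is essentially the paper's argument: an order interval of $\contctsR$ is dominated by a single compactly supported function, hence is a uniformly bounded subset of $\contc(\ts,\RR;K)$ for a fixed compact $K$, and continuity of the restriction there finishes the job. For the converse, however, the paper simply notes that the restriction of an order bounded $\ode$ to $\contc(\ts,\RR;K)$ is an order bounded functional on a Banach lattice and invokes the automatic continuity of such functionals, a general fact recorded earlier in Section~2. You instead argue directly: by Urysohn's lemma the unit ball of $\contc(\ts,\RR;K)$ is engulfed by the order interval $[-g,g]$ of the ambient space $\contctsR$, where $g\equiv 1$ on $K$ and $0\leq g\leq 1$, so order boundedness of $\ode$ immediately gives boundedness on that unit ball. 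Your version buys independence from the automatic continuity theorem (and from the completeness of $\contc(\ts,\RR;K)$, which that theorem needs) at the cost of one extra application of Urysohn's lemma; the paper's version is shorter because it leans on machinery already in place. The only cosmetic remark is the degenerate case $\supp{a}\cup\supp{b}=\emptyset$ in your second implication, where $[a,b]=\set{0}$ and there is nothing to prove.
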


\begin{proof}
	Suppose that $\ode:\contctsR\to\RR$ is a Radon measure in the sense of Bourbaki. Let $S\subseteq\contctsR$ be an order bounded subset. Then there exists $g\in\contctsR$ such that $\abs{f}\leq g$ for all $f\in S$. This implies that $S$ is a uniformly bounded subset of $\contc(\ts,\RR;\supp{g})$. Since the restriction of $\ode$ to $\contc(\ts,\RR;\supp{g})$ is continuous, $\ode(A)$ is a bounded, and then also an order bounded, subset of $\RR$. Hence $\ode\in\contctsRod$.
	
	Conversely, suppose that $\ode\in\contctsRod$. Let $K$ be a non-empty, compact subset of $\ts$. Then the restriction of $\ode$ to $\contc(\ts,\RR;K)$ is a regular linear functional. Since $\contc(\ts,\RR;K)$ is a Banach lattice, this restriction is continuous. Hence $\ode$ is a Radon measure in the sense of Bourbaki.
\end{proof}

Let $\ts$ be a non-empty, locally compact space. The above proposition makes it slightly easier to see that a linear functional on $\contctsR$ is a Radon measure. Indeed, it will usually be obvious that it is regular if this be, in fact, the case, whereas seeing that it is continuous on each subspace $\contc(\ts,\RR;K)$ could be (marginally) more complicated.

\smallskip

It is now also possible to make contact with measure theory in the other, perhaps more usual, sense of the word. In order to do so, we recall that a positive measure $\mu:\sigmats\to[0,\infty]$ on the Borel $\sigma$-algebra $\sigmats$ of $\ts$ is:
\begin{enumerate}
	\item a \emph{Borel measure} if  $\mu(K)<\infty$ for all compact subsets $K$ of $\ts$;
	\smallskip
	\item \emph{outer regular on $A\in\sigmats$} if $\mu(A)=\inf\desset{\mu(V): V \textup{ open and }A\subseteq V}$;
	\smallskip
	\item \emph{inner regular on $A\in\sigmats$} if $\mu(A)=\sup\desset{\mu(K): K \textup{ compact and }K\subseteq A}$.	
\end{enumerate}
Using the terminology in \cite[p.~352]{aliprantis_burkinshaw_PRINCIPLES_OF_REAL_ANALYSIS_THIRD_EDITION:1998}, $\mu$ is a \emph{positive regular Borel measure on $\ts$} if it is a positive Borel measure that is outer regular on all $A\in\sigmats$ and inner regular on all open subsets of $\ts$. The measure $\mu$ is \emph{finite} if $\mu(\ts)<\infty$.

The nomenclature is not uniform in the literature; sometimes the inner regularity on all elements of $\sigmats$ rather than just on the open subsets is incorporated in the definition of a regular Borel measure, as in \cite[p.~212]{folland_REAL_ANALYSIS_SECOND_EDITION:1999}. In \cite[p.~212]{folland_REAL_ANALYSIS_SECOND_EDITION:1999}, our positive regular Borel measures are called Radon measures. In view of the possibility of confusion with Bourbaki's terminology, we prefer to speak of positive regular Borel measures in the present article.

We shall now review a number of properties of regular measures on $\ts$. Details can be found in \cite{aliprantis_burkinshaw_PRINCIPLES_OF_REAL_ANALYSIS_THIRD_EDITION:1998}, for example; this reference puts more emphasis on the lattice structure than several other sources.

The set of positive regular Borel measures on $\ts$ is a cone that is denoted by $\regularextposmeasts$. Its subcone consisting of the finite positive regular Borel measures on $\ts$ is denoted by $\regularposmeasts$. By definition, the real-linear span of $\regularposmeasts$ is the vector space $\regularrealmeasts$ of \emph{real regular Borel measures on $\ts$}. The vector space $\regularrealmeasts$ is, in fact, a Dedekind complete Banach sublattice of the Banach lattice $\meas(\ts,\borelsalg,\RR)$ from \cref{ex:spaces_of_measures_two}. The supremum of two elements is given by \cref{eq:supremum_of_two_measures}, the modulus  by \cref{eq:norm_of_a_measure}, and the norm by \cref{eq:norm_of_a_measure}.

Let $\ode\in\contctsRod$. After splitting $\ode$ into its positive and negative parts, the Riesz representation theorem for positive functionals on $\contctsR$ implies that there exist $\pos{\mu}, \nega{\mu}\in\regularextposmeasts$ such that
\begin{equation}\label{eq:riesz_representation_theorem}
\pairing{\ode,f}=\int_\ts\!f\di{\pos{\mu}}-\int_\ts\!f\di{\nega{\mu}}
\end{equation}
for all $f\in\contctsR$. If $\ode\geq 0$, then one can take $\nega{\mu}=0$, and in this case $\pos{\mu}$ is uniquely determined.

Let $\ode\in\contctsRod$, and suppose that $\ode$ is a continuous linear functional on $(\contctsR,\norm{\,\cdot\,}_\infty)$; equivalently, one can suppose that $\ode$ is the restriction to $\contctsR$ of a continuous linear functional on $(\contotsR,\norm{\,\cdot\,}_\infty)$. Then $\pos{\mu}$ and $\nega{\mu}$ in \cref{eq:riesz_representation_theorem} can both be taken to be elements of $\regularposmeasts$. Conversely, if $\pos{\mu},\nega{\mu}\in\regularposmeasts$, then the right-hand side of \cref{eq:riesz_representation_theorem} defines a continuous linear functional $\ode$ on $(\contotsR,\norm{\,\cdot\,}_\infty)$. In this way, an isometric isomorphism of Banach lattices between the norm (or order) dual of the Banach lattice $(\contotsR,\norm{\,\cdot\,}_\infty)$ and the Banach lattice $(\regularrealmeasts,\norm{\,\cdot\,})$ is obtained; see \cite[Theorem~38.7]{aliprantis_burkinshaw_PRINCIPLES_OF_REAL_ANALYSIS_THIRD_EDITION:1998}, for example.

\begin{remark}
The measures $\pos{\mu}$ and $\nega{\mu}$ in \cref{eq:riesz_representation_theorem} can be infinite simultaneously, so that it is meaningless to say that $\ode$ is represented by the measure $\pos{\mu}-\nega{\mu}$ because the latter cannot generally be properly defined. This is where Bourbaki's terminology for Radon `measures' conflicts with that in measure theory in the sense of Lebesgue and Caratheodory.
\end{remark}

\smallskip

Let $\ts$ be a non-empty, locally compact space. The Riesz representation theorem provides a means to define the product of a bounded Borel measurable function on $\ts$ and an element of $\contctsRod$. We shall now explain this.

Let $\ode\in\contctsRod$. Suppose that $U$ is a non-empty, open, and relatively compact subset of $\ts$. Since $\contc(\ts,\RR;U)\subseteq \contc(\ts,\RR;\overline{U})$, the restriction of $\ode$ to $\contc(\ts,\RR;U)$ is continuous when $\contc(\ts,\RR;U)$ is supplied with the uniform norm. Therefore, there exists a unique finite regular Borel measure $\mu$ on $U$ such that
 \[
 \pairing{h,\ode}=\int_U\! h\di{\mu_U}
 \]
 for all $h\in\contc(\ts,\RR;U)$.

 Suppose that $V$ is an open and relatively compact subset of $\ts$ with $V\supseteq U$. Then it is a consequence of  \cite[Section~7.2, Exercise~7]{folland_A_COURSE_IN_ABSTRACT_HARMONIC_ANALYSIS_SECOND_EDITION:2016} and the uniqueness part of the Riesz representation theorem that $\mu_U$ equals the restriction of $\mu_V$ to $U$. Consequently, suppose that $U$ and $V$ are two non-empty, open, and relatively compact subsets of $\ts$ such that $U\cap V\neq\emptyset$. Then the restrictions of $\mu_U$ and $\mu_V$ to $U\cap V$ are identical.

Let $g:\ts\to\RR$ be a bounded Borel measurable function on $\ts$. Suppose that $f\in\contctsR$, and choose an open and relatively compact neighbourhood $U$ of $\supp{f}$ in $\ts$. Since $fg$ is zero outside $U$, it follows from the above that the integral
\begin{equation*}\label{eq:product_definition}
\int_U\!fg\di{\mu_U}
\end{equation*}
does not depend on the choice of $U$. Hence we can set
\[
\pairing{g\ode,f}\coloneqq\int_U\!fg\di{\mu_U}
\]
as a well-defined element of $\RR$, thus obtaining a map $g\ode:\contctsR\to\RR$. It is then routine to verify that $g\ode\in\contctsRod$, and that $g\ode$ depends bilinearly on the bounded Borel measurable function $g$ on $\ts$ and the element $\ode$ of $\contctsRod$. The element $g\ode$ of $\contctsRod$ is the \emph{product of $g$ and $\ode$}.

Although we shall not need this, let us note that, more generally, a similar argument that is based on local applications of the Riesz representation theorem can be employed to define the product $g\ode$ of a Borel measurable function $g$ on $\ts$ that is locally integrable (in the canonical sense) with respect to $\abs{\ode}$ for a given $\ode\in\contctsRod$. It is possible to avoid the Riesz representation theorem in defining such products, see \cite[V, $\mathsection$~5. No.~2]{bourbaki_INTEGRATION_VOLUME_I_CHAPTERS_1-6_SPRINGER_EDITION:2004}, but the definition using the Riesz representation theorem may be a little more transparent.

\smallskip

Following Bourbaki (see \cite[III, $\mathsection$~2, Nos.~1 and ~2]{bourbaki_INTEGRATION_VOLUME_I_CHAPTERS_1-6_SPRINGER_EDITION:2004}), we shall now introduce the supports of elements of $\contctsRod$.

Let $U$ be non-empty, open subset of $\ts$. An element $\ode$ of $\contctsRod$ \emph{vanishes on $U$} if
$\pairing{\ode,f}=0$ for all $f\in\contc(\ts,\RR;U)$. By definition, $\ode$ vanishes on the empty set. A partition of unity argument shows that $\ode$ vanishes on the open subset $\mathcal U$ of $\ts$ that is the union of all open subsets of $\ts$ on which $\ode$ vanishes. The closed subset $\ts\setminus\mathcal U$ of $\ts$ is called the \emph{support of $\ode$}; it is denoted by $\supp{\ode}$. Thus a point $x$ in $\ts$ is in the support of $\ode$ if and only if, for every open neighbourhood $U$ of $x$, there exists $f\in\contc(\ts,\RR;U)$ such that $\pairing{\ode,f}\neq 0$.

Let $\ode\in\contctsRod$. Then $\supp{\ode}=\supp{\abs{\ode}}=\supp{\pos{\ode}\,\cup\,\supp{}\nega{\ode}}$; see \cite[III, $\mathsection$~2, No.~2, Propositions~2]{bourbaki_INTEGRATION_VOLUME_I_CHAPTERS_1-6_SPRINGER_EDITION:2004}.

Let $\ode_1,\ode_2\in\contctsRod$. Then $\supp{(\ode_1+\ode_2)}\subseteq\supp{\ode_1}\cup\supp{\ode_2}$, and if $\abs{\ode_1}\leq\abs{\ode_2}$, then $\supp{\ode_1}\subseteq\supp{\ode_2}$; see \cite[III, $\mathsection$~2, No.~2, Propositions~3 and~4]{bourbaki_INTEGRATION_VOLUME_I_CHAPTERS_1-6_SPRINGER_EDITION:2004}. Consequently, if $S$ is an arbitrary subset of $\ts$, then the subset of $\contctsRod$ consisting of all elements $\ode$ of $\contctsRod$ such that $\supp{\ode}\subseteq S$ is an order ideal of $\contctsRod$.

Let $\ode\in\contctsRod$. It can happen that $\supp{\pos{\ode}}=\supp{\nega{\ode}}=\ts$; see \cite[V, Exercises, $\mathsection$~5, Exerc.~4]{bourbaki_INTEGRATION_VOLUME_I_CHAPTERS_1-6_SPRINGER_EDITION:2004}. Hence the disjointness of two elements of $\contctsRod$ does not imply that their supports are disjoint subsets of $\ts$. The following result shows that the converse implication \emph{does} hold.

\begin{lemma}\label{res:spatial_and_lattice_disjointness}
	Let $\ts$ be a non-empty, locally compact space. Let $\ode_1,\ode_2\in\contctsRod$ be such that $\supp{\ode_1}$ and $\supp{\ode_2}$ are disjoint subsets of $\ts$. Then $\ode_1$ and $\ode_2$ are disjoint elements of $\contctsRod$. Consequently, $\abs{\ode_1+\ode_2}=\abs{\ode_1}+\abs{\ode_2}$.
\end{lemma}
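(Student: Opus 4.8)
The plan is to establish the disjointness $\ode_1\perp\ode_2$, i.e.\ $\abs{\ode_1}\wedge\abs{\ode_2}=0$ in the vector lattice $\contctsRod$; the concluding equality $\abs{\ode_1+\ode_2}=\abs{\ode_1}+\abs{\ode_2}$ is then immediate from the general fact about disjoint elements recorded in Section~\ref{sec:vector_lattices_and_banach_lattices}. Set $\psi_i\coloneqq\abs{\ode_i}$, which are positive elements of $\contctsRod$, and recall that $\supp{\psi_i}=\supp{\ode_i}$, so that $\supp{\psi_1}$ and $\supp{\psi_2}$ remain disjoint. Since $\psi_1\wedge\psi_2$ is a positive linear functional and $\contctsR=\pos{(\contctsR)}-\pos{(\contctsR)}$, it suffices to prove that $(\psi_1\wedge\psi_2)(f)=0$ for every $f\in\pos{(\contctsR)}$.

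First I would apply the Riesz--Kantorovich formula from \cref{res:riesz_kantorovich} with codomain $\RR$, which gives $(\psi_1\wedge\psi_2)(f)=\inf\{\psi_1(g)+\psi_2(h):g,h\in\pos{(\contctsR)},\ g+h=f\}$. As every summand here is nonnegative, this infimum is $\geq 0$, so it is enough to exhibit a single decomposition $f=g+h$ with $g,h\in\pos{(\contctsR)}$ and $\psi_1(g)=\psi_2(h)=0$.

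To construct such a decomposition, fix $f$ and put $K\coloneqq\supp{f}$, which is compact; then $L_i\coloneqq\supp{\psi_i}\cap K$ are disjoint compact subsets of $\ts$. Using that $\ts$ is locally compact Hausdorff together with Urysohn's lemma, I would produce a function $\phi\in\conttsR$ with $0\le\phi\le1$ that is identically $0$ on an open neighbourhood of $L_1$ and identically $1$ on an open neighbourhood of $L_2$: separate $L_1$ and $L_2$ by disjoint open sets $O_1,O_2$, choose $W$ with $L_2\subseteq W\subseteq\overline{W}\subseteq O_2$ and $\overline{W}$ compact, and let $\phi$ be a Urysohn function equal to $1$ on $\overline{W}$ with support inside $O_2$; then $\phi\equiv1$ on $W\supseteq L_2$ and $\phi\equiv0$ on $O_1\supseteq L_1$. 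Setting $g\coloneqq f\phi$ and $h\coloneqq f(1-\phi)$ yields $g,h\in\pos{(\contctsR)}$ with $g+h=f$.

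It then remains to verify the two vanishing statements, which I expect to be the delicate point. Because $\phi$ vanishes on the open set $O_1$, one has $\{x:g(x)\neq0\}\subseteq K\setminus O_1$, a compact set; any of its points lying in $\supp{\psi_1}$ would lie in $L_1\subseteq O_1$, a contradiction, so $\supp{g}\subseteq K\setminus O_1\subseteq\ts\setminus\supp{\psi_1}$. As $\psi_1$ vanishes on the open set $\ts\setminus\supp{\psi_1}$ and $\supp{g}$ is contained in it, we get $\psi_1(g)=0$; symmetrically $\psi_2(h)=0$. Hence $(\psi_1\wedge\psi_2)(f)=0$ for all $f\in\pos{(\contctsR)}$, and therefore $\psi_1\wedge\psi_2=0$, proving $\ode_1\perp\ode_2$. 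The main obstacle is precisely this support bookkeeping: the cut-off $\phi$ must vanish on a whole neighbourhood of $L_1$ (not merely on $L_1$ itself), so that $\supp{g}$ stays inside the open set where $\psi_1$ vanishes, which is all that the defining property of the support provides.
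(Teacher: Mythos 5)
Your proof is correct and follows essentially the same route as the paper's: reduce to the moduli $\abs{\ode_i}$ using $\supp{\ode}=\supp{\abs{\ode}}$, apply the Riesz--Kantorovich formula for the infimum, and exhibit a decomposition $f=g+h$ via a Urysohn-type partition of unity subordinate to the complements of the two supports, so that each term evaluates to zero. Your explicit care that the cut-off vanishes on a whole neighbourhood of $\supp{\psi_1}\cap\supp{f}$ is exactly the support bookkeeping the paper's partition-of-unity step implicitly relies on.
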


\begin{proof}
	Using the fact that $\supp{\ode}=\supp{\abs{\ode}}$ for $\ode\in\contctsRod$, we may suppose that $\ode_1,\ode_2\in\pos{(\contctsRod)}$.
	
	Then \cref{eq:RK3} yields that, for $f\in\pos{\contctsR}$, we have
	\[
	(\ode_1\wedge\ode_2)(f)=\inf\desset{\ode_1(f_1)+\ode_2(f_2): f_1,f_2\in\pos{\contctsR},\,f_1+f_2=f}.
	\]
	Since $\supp{\ode_1}$ and $\supp{\ode_2}$ are disjoint, we have
	\[
	\supp{f}\subseteq\ts=\left(\ts\setminus\supp{\ode_1}\right)\cup\left(\ts\setminus\supp{\ode_2}\right).
	\]
	We can then find continuous functions $g_1,g_2:\ts\to[0,1]$ with compact support such that $g_1+g_2=1$, $\supp{g_1}\subseteq \ts\setminus\supp{\ode_1}$, and $\supp{g_2}\subseteq \ts\setminus\supp{\ode_2}$. For the resulting decomposition $f=g_1f+g_2f$, we have $\ode_1(g_1f)=\ode_2(g_2f)=0$, and this shows that $(\ode_1\wedge\ode_2)(f)\leq 0$. Since obviously  $(\ode_1\wedge\ode_2)(f)\geq 0$, we see that  $(\ode_1\wedge\ode_2)(f)=0$. Hence $\ode_1\wedge\ode_2=0$.
	
	Now that we have established that $\ode_1$ and $\ode_2$ are disjoint, the final statement follows from the general principle in vector lattices that the modulus is additive on the sum of two (in fact, of finitely many) mutually disjoint elements.
\end{proof}

\begin{remark}

\cref{res:spatial_and_lattice_disjointness}, with its elementary proof, is also a consequence of the technically considerably more demanding
\cite[V, $\mathsection$~5, No.~7, Proposition~13]{bourbaki_INTEGRATION_VOLUME_I_CHAPTERS_1-6_SPRINGER_EDITION:2004}, where a necessary and sufficient condition for two elements of $\contctsRod$ to be disjoint\textemdash Bourbaki calls such elements \emph{alien} (to each other)\textemdash is given. The reader may wish to consult \cite[IV, $\mathsection$~2, No.~2, Proposition~5 and IV, $\mathsection$~5, No.~2, Definition~3]{bourbaki_INTEGRATION_VOLUME_I_CHAPTERS_1-6_SPRINGER_EDITION:2004} to see that an element $\ode$ of $\contctsRod$ is \emph{concentrated on $\supp{\ode}$} in the sense of \cite[V, $\mathsection$~5, No.~7, Definition~4]{bourbaki_INTEGRATION_VOLUME_I_CHAPTERS_1-6_SPRINGER_EDITION:2004}, after which it is immediate from \cite[V, $\mathsection$~5, No.~7, Proposition~13] {bourbaki_INTEGRATION_VOLUME_I_CHAPTERS_1-6_SPRINGER_EDITION:2004} that the disjointness of the supports of two elements of $\contctsRod$ implies their disjointness in the vector lattice $\contctsRod$.
\end{remark}

The relevance of the following result will become clear in the proof of \cref{res:embedding_of_Lp_into_ordercontcRod}, below.

\begin{lemma}\label{res:separating_supports_for_continuous_compactly_supported_functions}
	Let $\ts$ be a non-empty, locally compact space, and let $f\in\contctsR$. Take $\varepsilon>0$. Then there exist $\pos{g},\nega{g}\in\contctsR$ such that:
	\begin{enumerate}
		\item $0\leq \pos{g}\leq \pos{f}$ and $0\leq \nega{g}\leq \nega{f}$;
		\item $0\leq \pos{f}-\pos{g}\leq\varepsilon\indicator_\ts$ and $0\leq \nega{f}-\nega{g}\leq\varepsilon\indicator_\ts$;
		\item  $\supp{\pos{g}}\cap\supp{\nega{g}=\emptyset}$.
	\end{enumerate}
\end{lemma}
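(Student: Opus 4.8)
The plan is to shave a layer of height $\varepsilon$ off each of the functions $\pos{f}$ and $\nega{f}$; this will simultaneously keep the modifications within the prescribed bounds and pull the two supports apart. Concretely, I would define, pointwise on $\ts$,
\[
\pos{g}\coloneqq(\pos{f}-\varepsilon\indicator_\ts)\vee 0\quad\text{and}\quad \nega{g}\coloneqq(\nega{f}-\varepsilon\indicator_\ts)\vee 0,
\]
where the lattice operation is the pointwise one. Both functions are continuous, being pointwise maxima of continuous functions. Moreover, $\set{x\in\ts:\pos{g}(x)\ne 0}=\set{x\in\ts:\pos{f}(x)>\varepsilon}$ is contained in the closed set $\set{x\in\ts:\pos{f}(x)\ge\varepsilon}$, which in turn is contained in $\supp{\pos{f}}\subseteq\supp{f}$; since $\supp{f}$ is compact, so is $\supp{\pos{g}}$, and hence $\pos{g}\in\contctsR$. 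The same argument gives $\nega{g}\in\contctsR$.

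For part (1), the inequalities $0\le\pos{g}\le\pos{f}$ are immediate from $0\le\pos{f}$ and $\pos{f}-\varepsilon\indicator_\ts\le\pos{f}$, and similarly for $\nega{g}$. For part (2), I would use the pointwise identity $a-(a-\varepsilon)\vee 0=a\wedge\varepsilon$, valid for $a\ge 0$, which yields $\pos{f}-\pos{g}=\pos{f}\wedge(\varepsilon\indicator_\ts)$; this lies pointwise between $0$ and $\varepsilon$, so that $0\le\pos{f}-\pos{g}\le\varepsilon\indicator_\ts$, and the argument for $\nega{f}-\nega{g}$ is identical.

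Part (3) is the only point requiring any care, and it is where the subtraction of $\varepsilon$ does its work: it is not enough to note that $\pos{f}$ and $\nega{f}$ are nowhere simultaneously positive, since their supports can still meet along the zero set of $f$. As observed above, continuity of $\pos{f}$ gives $\supp{\pos{g}}=\overline{\set{x\in\ts:\pos{f}(x)>\varepsilon}}\subseteq\set{x\in\ts:\pos{f}(x)\ge\varepsilon}$, and similarly $\supp{\nega{g}}\subseteq\set{x\in\ts:\nega{f}(x)\ge\varepsilon}$. Since at every point of $\ts$ at most one of $\pos{f}$ and $\nega{f}$ is nonzero, these two closed super-level sets are disjoint, whence $\supp{\pos{g}}\cap\supp{\nega{g}}=\emptyset$. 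The main (and only minor) obstacle is precisely this passage from ``disjoint positivity sets'' to ``disjoint \emph{supports}'', which is resolved by enclosing the closure of the open set $\set{\pos{f}>\varepsilon}$ in the closed super-level set $\set{\pos{f}\ge\varepsilon}$.
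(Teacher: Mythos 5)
Your proof is correct and follows essentially the same route as the paper: your $\pos{g}=(\pos{f}-\varepsilon\indicator_\ts)\vee 0$ is pointwise identical to the paper's $(\pos{f}\vee\varepsilon\indicator_\ts)-\varepsilon\indicator_\ts$, and the disjointness of supports is obtained by the same passage from $\overline{\desset{\pos{f}>\varepsilon}}$ to the closed super-level set $\desset{\pos{f}\geq\varepsilon}$. You even dispense with the paper's preliminary case distinction, which is a harmless simplification.
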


\begin{proof}
	If $\pos{f}=0$, then we can take $\pos{g}=0$ and $\nega{g}=\nega{f}$; if $\nega{f}=0$, then we can take $\pos{g}=\pos{f}$ and $\nega{g}=0$. Hence we may suppose that there exists $\delta>0$ such that $\desset{x\in\ts: \pos{f}(x)>\delta}$ and $\desset{x\in\ts: \nega{f}(x)>\delta}$ are both non-empty subsets of $\ts$. It is then sufficient to prove the result for all $\varepsilon$ such that $0<\varepsilon<\delta$. For such a fixed $\varepsilon$, set 
	\[
	\pos{g}\coloneqq (\pos{f}\vee\varepsilon\indicator_\ts)-\varepsilon\indicator_\ts.
	\] 
	Then $\pos{g}\in\conttsR$, $0\leq \pos{g}\leq \pos{f}$, and $0\leq \pos{f}-\pos{g}\leq\varepsilon\indicator_\ts$; we see that $\pos{g}\in\contctsR$. Likewise, we set 
	\[
	\nega{g}\coloneqq (\nega{f}\vee\varepsilon\indicator_\ts)-\varepsilon\indicator_\ts,
	\] 
	and then $\nega{g}\in\contctsR$, $0\leq \nega{g}\leq \nega{f}$, and $0\leq \nega{f}-\nega{g}\leq\varepsilon\indicator_\ts$.
	
	Let $x\in\ts$. If 
	\[
	x\in\supp{\pos{g}}=\overline{\desset{x\in\ts : \pos{g}(x)\neq 0}}\subseteq \overline{\desset{x\in\ts : \pos{f}(x)>\varepsilon}},
	\] 
	then the continuity of $\pos{f}$ implies that $\pos{f}(x)\geq\varepsilon$. Hence $f(x)\geq\varepsilon$. Likewise, if $x\in\supp{\nega{g}}$, then $\nega{f}(x)\geq\varepsilon$, which implies that $f(x)\leq -\varepsilon$. Since $\varepsilon>0$, this shows that $\supp{\pos{g}}\cap\supp{\nega{g}=\emptyset}$.
\end{proof}

\section{Closed subspaces of locally compact spaces}\label{sec:closed_subspaces_of_locally_compact_spaces}

\noindent Let $\ts$ be a non-empty, locally compact space, and let $\tstwo$ be a non-empty, closed subspace of $\ts$. Then $\tstwo$ is again a locally compact space. We shall now prove that $\contctstwoRod$ can be canonically viewed as the order ideal of $\contctsRod$ that consists of those elements of $\contctsRod$ with support contained in $\tstwo$. The reader who is interested in Banach lattices on groups, but not on semigroups, can omit this section in its entirety.

We are not aware of references for the results in this section, which may find applications elsewhere.

\smallskip

Let $\tstwo$ be a non-empty, closed subset of a locally compact space $\ts$. Then we define the \emph{restriction map} $\res:\contctsR\to\contctstwoR$ by setting $\res f\coloneqq f\mid_Y$ for $f\in\contctsR$. As we shall see, the order adjoint
\[
\resdual:\contctstwoRod\to\contctsRod
\]
of $\res$ is injective, and the image of $\contctstwoRod$ under $\resdual$ is the order ideal of $\contctsRod$ that consists of those elements of $\contctsRod$ with support contained in $\tstwo$.

We shall require two preparatory results. The first one is a slight strengthening of a version of Tietze's extension theorem  \cite[Theorem~20.4]{rudin_REAL_AND_COMPLEX_ANALYSIS_THIRD_EDITION:1987}, on which it is also based.

\begin{proposition}\label{res:extension}
	Let $\ts$ be a non-empty, locally compact space, let $\tstwo$ be a non-empty, closed subspace of $\ts$, and let $f\in\contctstwoR$. Then there exists $F\in\contctsR$ such that $\res F=f$ and $\norm{F}_{\infty,\ts}=\norm{f}_{\infty,\tstwo}$. If $f\geq 0$, then it can be arranged that also $F\geq 0$.
\end{proposition}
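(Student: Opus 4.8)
The plan is to avoid working directly on $\ts$, which need not be normal, by transplanting the problem to a compact (hence normal) neighbourhood of the support of $f$, applying the classical Tietze extension theorem there, and then pulling the result back to $\ts$ with a cut-off function that simultaneously controls the support, the supremum norm, and (when needed) the sign.

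First I would set $K\coloneqq\supp{f}$, a compact subset of $\tstwo$ and therefore of $\ts$, and use the local compactness of $\ts$ to choose a relatively compact open set $U$ in $\ts$ with $K\subseteq U$. Then $\overline{U}$ is compact Hausdorff, hence normal, and $\tstwo\cap\overline{U}$ is closed in $\overline{U}$. The function $f\mid_{\tstwo\cap\overline{U}}$ is continuous, so the version of Tietze's extension theorem cited in the statement produces $H\in\cont(\overline{U},\RR)$ with $H\mid_{\tstwo\cap\overline{U}}=f\mid_{\tstwo\cap\overline{U}}$ and $\norm{H}_{\infty,\overline{U}}=\norm{f}_{\infty,\tstwo\cap\overline{U}}$; since $f$ vanishes off $K\subseteq\overline{U}$, this last quantity equals $\norm{f}_{\infty,\tstwo}$.

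Next I would invoke Urysohn's lemma to fix $\psi\in\contctsR$ with $0\le\psi\le\indicator_\ts$, with $\psi\equiv 1$ on $K$, and with $\supp{\psi}\subseteq U$, and then define $F$ to be $\psi H$ on $\overline{U}$ and $0$ on $\ts\setminus U$. The two prescriptions agree on the overlap $\overline{U}\setminus U$, where $\psi=0$, so by the gluing lemma for the closed cover of $\ts$ by $\overline{U}$ and $\ts\setminus U$ the map $F$ is continuous; as $\supp{F}\subseteq\supp{\psi}$ is compact, $F\in\contctsR$. To see that $\res F=f$, I would check the three cases for $y\in\tstwo$: on $K$ one has $\psi(y)=1$ and $H(y)=f(y)$; on $(\tstwo\cap\overline{U})\setminus K$ both $f(y)$ and $F(y)=\psi(y)f(y)$ vanish; and for $y\in\tstwo\setminus\overline{U}$ one has $y\notin U$, so $F(y)=0=f(y)$. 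Finally, $0\le\psi\le\indicator_\ts$ gives $\abs{F}\le\abs{H}$ pointwise, so $\norm{F}_{\infty,\ts}\le\norm{H}_{\infty,\overline{U}}=\norm{f}_{\infty,\tstwo}$, while the reverse inequality is immediate from $\res F=f$; hence equality holds.

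For the positivity clause, when $f\ge 0$ I would simply replace $H$ by $H\vee 0$ before forming $F$: this modified extension still agrees with $f$ on $\tstwo\cap\overline{U}$ (because $f\ge 0$), still satisfies the norm identity (since $\abs{H\vee 0}\le\abs{H}$), and yields $F=\psi\,(H\vee 0)\ge 0$. The only genuine obstacle is the possible non-normality of $\ts$; the device of restricting attention to the compact set $\overline{U}$ and then reinstating the cut-off $\psi$ is precisely what overcomes it while preserving compact support and the exact supremum norm.
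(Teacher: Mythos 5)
Your proof is correct and follows essentially the same route as the paper's: extend $f$ over the compact (hence normal) set $\overline{U}$ by Tietze, multiply by a Urysohn cut-off supported in $U$, extend by zero, and take a positive part to handle the case $f\geq 0$. The only cosmetic differences are that you apply the positive-part trick to the Tietze extension before forming the product rather than to the final $F$, and that you spell out the gluing-lemma continuity check; neither changes the substance.
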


\begin{proof}
	Let $f\in\contctstwoR$. Take a relatively compact open neighbourhood $U$ of $\supp{f}$ in $\ts$. Since $\overline{U}\cap\tstwo$ is a compact subset of $\overline{U}$, Tietze's extension theorem shows that there exists an element $g$ of $\cont(\overline{U},\RR)$ such that $g\mid \overline{U}\cap\tstwo=f\mid \overline{U}\cap\tstwo$ as well as   $\norm{g}_{\infty,\overline{U}}=\norm{f}_{\infty,\overline{U}\cap\tstwo}=\norm{f}_{\infty,\tstwo}$.
	By a version of Urysohn's lemma \cite[Theorem~2.12]{rudin_REAL_AND_COMPLEX_ANALYSIS_THIRD_EDITION:1987},  there exists $h\in\cont(\overline{U},\RR)$ such that $h(\overline{U})\subseteq[0,1]$, $h(y)=1$ for $y\in\supp{f}$, and $\supp{h} \subseteq U$.
	
	Set $F\coloneqq gh $, so that $F\in\cont(\overline U,\RR)$  and $\supp{F} \subseteq U$.  We extend $F$ to be an element of $\contctsR$ by setting $F(x)\coloneqq 0$ for $x\in \ts \setminus\overline{U}$. Then we have $\norm{F}_{\infty,
	\ts}\leq\norm{g}_{\infty,\overline{U}}=\norm{f}_{\infty,\tstwo}$.
	
	For $y\in\supp{f}$, we have $F(y)= g(y)h(y) = g(y)= f(y)$; this also shows that $\norm{F}_{\infty,\ts}\geq\norm{f}_{\infty,\tstwo}$.  For $y\in (\overline{U}\cap \tstwo)\setminus \supp{f}$, we have $F(y)= 0=f(y)$ because $g(y)=f(y)=0$. For $y\in \tstwo\setminus\overline{U}$, we have $F(y)=0=f(y)$ because $F$ vanishes on $\ts \setminus \overline{U}$. We conclude that $\res F=f$ and that $\norm{F}_{\infty,\ts}=\norm{f}_{\infty,\tstwo}$.
	
	If $f\geq 0$, then replacing $F$ by $\pos{F}$ shows that we can also arrange that $F\geq 0$.
\end{proof}

\begin{corollary}\label{res:restriction}
	Let $\ts$ be a non-empty, locally compact space, and let $\tstwo$ be a non-empty, closed subspace of $\ts$. Then $\res:\contctsR\to\contctstwoR$ is a continuous, interval preserving, and surjective lattice homomorphism.
\end{corollary}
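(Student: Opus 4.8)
The plan is to verify the three asserted properties in turn, the only one requiring genuine work being the interval-preserving property; the lattice homomorphism and surjectivity statements are immediate from the pointwise structure and from \cref{res:extension}, respectively, and continuity follows from the universal property of the inductive limit topology.

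First I would record that $\res$ is well defined and a lattice homomorphism. For $f\in\contctsR$ the function $\res f$ is continuous on $\tstwo$ and its support is contained in $\supp{f}\cap\tstwo$, hence compact, so $\res f\in\contctstwoR$. Since the order on both spaces is the pointwise order and the lattice operations are computed pointwise, one has $\res(f\vee g)=\res f\vee\res g$ for all $f,g\in\contctsR$; thus $\res$ is a lattice homomorphism and, in particular, a positive operator. Surjectivity is then immediate: by \cref{res:extension}, every $f\in\contctstwoR$ is of the form $\res F$ for a suitable $F\in\contctsR$.

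For the interval-preserving property I must show $\res([0,F])=[0,\res F]$ for every $F\in\pos{\contctsR}$. Positivity of $\res$ already yields $\res([0,F])\subseteq[0,\res F]$, so the point is the reverse inclusion. Given $g\in\contctstwoR$ with $0\leq g\leq\res F$, I would use \cref{res:extension} to produce a nonnegative $G\in\contctsR$ with $\res G=g$, and then set $h\coloneqq G\wedge F$. Then $h$ is continuous and $0\leq h\leq F$, which forces $\supp{h}\subseteq\supp{F}$; as the latter is compact, $h\in\contctsR$ and $h\in[0,F]$. Restricting to $\tstwo$ and using that $\res$ is a lattice homomorphism together with $g\leq\res F$ gives $\res h=\res G\wedge\res F=g\wedge\res F=g$, so $g\in\res([0,F])$. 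This truncation-by-$F$ step is the crux of the argument: the extension furnished by \cref{res:extension} need not lie below $F$ away from $\tstwo$, and meeting it with $F$ repairs this without disturbing the values on $\tstwo$.

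Finally, for continuity I would invoke the universal property of the inductive limit topology $\mathcal T$ on $\contctsR$ recorded in \cref{sec:locally_compact_spaces}: a linear map out of $(\contctsR,\mathcal T)$ into a locally convex space is continuous precisely when its restriction to each $\contc(\ts,\RR;K)$ is continuous. Fixing a non-empty compact $K\subseteq\ts$, the operator $\res$ sends $\contc(\ts,\RR;K)$ into $\contc(\tstwo,\RR;K\cap\tstwo)$, where $K\cap\tstwo$ is compact in $\tstwo$, and on this subspace $\res$ is norm-contractive since $\norm{\res f}_{\infty,\tstwo}\leq\norm{f}_{\infty,\ts}$. Composing with the canonical continuous inclusion $\contc(\tstwo,\RR;K\cap\tstwo)\hookrightarrow\contctstwoR$ shows that the restriction of $\res$ to each $\contc(\ts,\RR;K)$ is continuous, whence $\res$ is continuous. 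I expect no real difficulty here beyond correctly bookkeeping the two inductive limit topologies; the only substantive step in the whole corollary is the $G\wedge F$ truncation used for the interval-preserving property.
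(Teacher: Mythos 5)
Your proposal is correct and follows essentially the same route as the paper: the lattice homomorphism and surjectivity parts are handled identically, the interval-preserving property is proved by exactly the paper's truncation $F\wedge G$ of a nonnegative extension $G$ furnished by \cref{res:extension}, and the continuity argument via the inductive limit topology is just a more detailed writing-out of what the paper dismisses as immediate.
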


\begin{proof}
	The map $\res$ is clearly a lattice homomorphism, and it is immediate from the properties of the topologies of $\contctsR$ and $\contctstwoR$ that $\res$ is continuous. The surjectivity follows from \cref{res:extension}.
	
	It remains to show that the positive linear operator $\res: \contctsR \to \contctstwoR $  is interval preserving. For this, take $F\in\pos{\contctsR}$, and suppose that $g\in \contctstwoR$ is such that $0\leq g\leq \res F$.
	By \cref{res:extension}, there exists $G\in\pos{\contctsR}$ such that $\res G=g$. Then $0\leq F\wedge G\leq F$ and $\res(F\wedge G)=\res F\wedge \res G=\res F\wedge g=g$. Thus $\res([0,F])=[0,\res F]$, as required.
\end{proof}

\begin{theorem}\label{res:embedding_of_order_duals}
	Let $\ts$ be a non-empty, locally compact space, and let $\tstwo$ be a non-empty, closed subspace of $\ts$. Then $\resdual:\contctstwoRod\to\contctsRod$ is a weak$^\ast$-continuous, injective, and interval preserving lattice homomorphism.
	
	Furthermore, $\supp{\ode}=\supp{\resdual\ode}$ for all $\ode\in\contctstwoRod$.
	
	The image of $\contctstwoRod$ under $\resdual$ is the order ideal of $\contctsRod$ that consists of all elements $\Ode$ of $\contctsRod$ such that $\supp{\Ode}\subseteq\tstwo$.
	
	Suppose that $g$ is a bounded Borel measurable function on $\tstwo$. Extend $g$ to a Borel measurable function $\widetilde g$ on $\ts$ by setting $\widetilde g(x)\coloneqq 0$ for $x\in\ts\setminus\tstwo$. Then $\resdual(g\ode)=\widetilde g\resdual \ode$ for all $\ode\in\contctstwoRod$.
\end{theorem}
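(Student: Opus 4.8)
The plan is to derive the four assertions in turn: the operator-theoretic properties of $\resdual$ will follow from the properties of $\res$ recorded in \cref{res:restriction}, after which I would establish the support identity, the description of the image, and the compatibility with multiplication by Borel functions directly. For the first part: since $\res$ is interval preserving by \cref{res:restriction}, \cref{res:dual_is_lattice_homomorphism} shows at once that $\resdual$ is a lattice homomorphism. The point evaluations at points of $\tstwo$ are positive functionals separating the points of $\contctstwoR$, so $\contctstwoRod$ separates the points of $\contctstwoR$; as $\res$ is a lattice homomorphism, \cref{res:dual_is_interval_preserving} then yields that $\resdual$ is interval preserving. Injectivity is the standard consequence of the surjectivity of $\res$ in \cref{res:restriction}: if $\resdual\ode=0$, then $\pairing{\ode,\res f}=0$ for all $f\in\contctsR$, and surjectivity forces $\ode=0$. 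Finally, weak$^\ast$-continuity is immediate from $\pairing{\resdual\ode,f}=\pairing{\ode,\res f}$, which for each fixed $f\in\contctsR$ depends weak$^\ast$-continuously on $\ode$.

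For the support identity I would first note that $\supp{\resdual\ode}\subseteq\tstwo$: if $f\in\contctsR$ has $\supp f\subseteq\ts\setminus\tstwo$, then $\res f=0$, so $\pairing{\resdual\ode,f}=0$, i.e.\ $\resdual\ode$ vanishes on the open set $\ts\setminus\tstwo$. For the equality, take $y\in\tstwo$. If $y\in\supp\ode$ and $V$ is an open neighbourhood of $y$ in $\ts$, there is $g\in\contc(\tstwo,\RR;V\cap\tstwo)$ with $\pairing{\ode,g}\neq 0$; extending $g$ to some $G\in\contctsR$ by \cref{res:extension} and multiplying by an Urysohn cut-off that is $1$ on the compact set $\supp g$ and supported in $V$, I obtain $f\in\contc(\ts,\RR;V)$ with $\res f=g$, whence $\pairing{\resdual\ode,f}\neq 0$ and $y\in\supp{\resdual\ode}$. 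Conversely, if $y\in\supp{\resdual\ode}$ and $U$ is an open neighbourhood of $y$ in $\tstwo$, write $U=V\cap\tstwo$ with $V$ open in $\ts$; there is $f\in\contc(\ts,\RR;V)$ with $\pairing{\ode,\res f}=\pairing{\resdual\ode,f}\neq 0$, and $\res f\in\contc(\tstwo,\RR;U)$, so $y\in\supp\ode$. The production of the cut-off extension is the only genuinely fiddly point here.

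For the description of the image, the inclusion $\resdual(\contctstwoRod)\subseteq\set{\Ode\in\contctsRod:\supp\Ode\subseteq\tstwo}$ is immediate from the support identity, and the set on the right is, as observed in Section~\ref{sec:locally_compact_spaces}, an order ideal of $\contctsRod$. For the reverse inclusion, given $\Ode$ with $\supp\Ode\subseteq\tstwo$, I would define a candidate preimage $\ode$ on $\contctstwoR$ by $\pairing{\ode,g}\coloneqq\pairing{\Ode,G}$ for any $G\in\contctsR$ with $\res G=g$, which exists by \cref{res:extension}. The crux is well-definedness, namely that $\pairing{\Ode,G}=0$ whenever $G|_\tstwo=0$. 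Here I would exploit that $\Ode$ is concentrated on its support: for such $G$ and $\varepsilon>0$, the truncation $G_\varepsilon\coloneqq\pos{(G-\varepsilon\indicator_\ts)}-\pos{(-G-\varepsilon\indicator_\ts)}$ has support inside $\set{x\in\ts:\abs{G(x)}\geq\varepsilon}$, which is disjoint from $\tstwo\supseteq\supp\Ode$ because $G$ vanishes on $\tstwo$; hence $\pairing{\Ode,G_\varepsilon}=0$, while $G_\varepsilon\to G$ uniformly with all supports inside the fixed compact set $\supp G$, so continuity of $\Ode$ on $\contc(\ts,\RR;\supp G)$ forces $\pairing{\Ode,G}=0$. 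Once $\ode$ is well defined it is clearly linear, it lies in $\contctstwoRod$ by the same uniform bound together with \cref{res:radon_measures_are_the_order_dual}, and $\resdual\ode=\Ode$ follows by taking $f$ itself as the extension of $\res f$.

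For the product formula I would reduce both sides to the local measures used to define the two products. Fix $f\in\contctsR$ and a relatively compact open $U\subseteq\ts$ containing $\supp f$, and let $\mu_{U\cap\tstwo}$ be the finite regular Borel measure on $U\cap\tstwo$ representing $\ode$ there. The key step is to identify the local measure $\nu_U$ representing $\resdual\ode$ on $U$ as the image of $\mu_{U\cap\tstwo}$ under the inclusion $U\cap\tstwo\hookrightarrow U$; this follows from the uniqueness in the Riesz representation theorem, since for $h\in\contc(\ts,\RR;U)$ one has $\pairing{\resdual\ode,h}=\pairing{\ode,\res h}=\int_{U\cap\tstwo}(\res h)\di{\mu_{U\cap\tstwo}}$. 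Using $\widetilde g|_\tstwo=g$ together with the fact that $\nu_U$ lives on $\tstwo$, both $\pairing{\resdual(g\ode),f}=\pairing{g\ode,\res f}$ and $\pairing{\widetilde g\,\resdual\ode,f}$ then evaluate to $\int_{U\cap\tstwo}(\res f)\,g\di{\mu_{U\cap\tstwo}}$, which gives the claimed equality. I expect this last identification of local measures, rather than any single earlier step, to be the main obstacle, as it is precisely the point at which the restriction map and the embedding must be matched measure-theoretically.
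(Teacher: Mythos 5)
Your proposal is correct and follows the same overall architecture as the paper's proof: the operator-theoretic properties of $\resdual$ are deduced from \cref{res:restriction} via \cref{res:dual_is_lattice_homomorphism,res:dual_is_interval_preserving}, the support identity is proved by the same two inclusions (using \cref{res:extension} together with an Urysohn cut-off for the harder direction), and the product formula is obtained by matching the local Riesz measures exactly as in the paper. The one point where you genuinely diverge is the crux of the image description, namely the well-definedness of $\ode$. The paper fixes an open set $W_\varepsilon$ around $\tstwo\cap\supp{F}$ inside $\desset{x\in\ts:\abs{F(x)}<\varepsilon}$ and a Urysohn function $G_\varepsilon$ that is $1$ on $\overline{W_\varepsilon}$, then argues that $\pairing{\Ode,F}=\pairing{\Ode,FG_\varepsilon}$ with $\norm{FG_\varepsilon}_\infty\leq\varepsilon$. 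You instead truncate $F$ at height $\varepsilon$, so that the truncation has support in $\desset{x\in\ts:\abs{F(x)}\geq\varepsilon}$, which is disjoint from $\tstwo\supseteq\supp{\Ode}$, hence pairs to zero against $\Ode$, and converges to $F$ uniformly with all supports inside the fixed compact set $\supp{F}$. Both devices rest on the same two facts\textemdash that $\Ode$ vanishes on the open complement of its support and is norm-continuous on $\contc(\ts,\RR;K)$ for compact $K$\textemdash but your truncation avoids Urysohn's lemma and the auxiliary neighbourhood $W_\varepsilon$ altogether, so it is marginally cleaner. Two points you should still make explicit in a full write-up: when verifying that $\ode\in\contctstwoRod$, fix one relatively compact open neighbourhood $U$ of the compact set $K\subseteq\tstwo$ so that all extensions supplied by \cref{res:extension} can be taken with support in $\overline{U}$ and with the same uniform norm, which yields the continuity on $\contc(\tstwo,\RR;K)$ needed to invoke \cref{res:radon_measures_are_the_order_dual}; and in the product formula, note that the pushforward of $\mu_{U\cap\tstwo}$ to $U$ is again a \emph{regular} Borel measure (this uses that $U\cap\tstwo$ is closed in $U$), since otherwise the uniqueness clause of the Riesz representation theorem cannot be applied.
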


\begin{proof}
	In view of \cref{res:restriction,res:dual_is_lattice_homomorphism,res:dual_is_interval_preserving}, it is clear that $\resdual$, which is obviously weak$^\ast$-continuous, is an injective and interval preserving lattice homomorphism.
	
	We turn to the second statement.
	
	Let $\ode\in\contctstwoRod$. Let $x\in\ts$, and suppose that $x\notin\supp{\ode}$. Since $\tstwo$ is a closed subset of $\ts$, $\supp{\ode}$ is a closed subset of $\ts$. Hence there exists an open neighbourhood $U$ of $x$ in $\ts$ such that $U\cap \supp{\ode}=\emptyset$. Let $f\in\contctsR$ be such that $\supp{f}\subseteq U$. If $\res\! f=0$, then certainly $\pairing{\resdual\ode,f}=\pairing{\ode,\res\! f}=0$. If $\res\! f\neq 0 $, then $\res\! f$ is an element of $\contctstwoR$ such that $\supp{\res\! f}\subseteq U\cap \tstwo$. Since $U\cap \tstwo$ is then a non-empty, open subset of $\tstwo$ that is disjoint from $\supp{\ode}$, we have $\pairing{\ode,\res\! f}=0$. Hence $\pairing{\resdual \ode,f}=0$. We conclude that $\resdual \ode$ vanishes on $U$, and hence $x\notin\supp{\resdual\ode}$. It follows that $\supp{\ode}\supseteq\supp{\resdual \ode}$.
	
	For the reverse inclusion, take $x\in\tstwo$, and suppose that $x\in\supp{\ode}$. Let $U$ be an open neighbourhood of $x$ in $\ts$. Take $f\in\contctstwoR$ such that $\supp{f}\subseteq U\cap\tstwo$ and $\pairing{\ode,f}\neq 0$. By \cref{res:extension}, there exists $F\in\contctsR$ such that $\res F=f$, and Urysohn's lemma furnishes $G\in\contctsR$ such that $G=1$ on $\supp{f}$ and $\supp{G}\subseteq U$. Set $H\coloneqq FG$. Then $H\in\contctsR$, $\supp{H}\subseteq U$, and $\res H=f$. We then conclude from  $\pairing{\resdual\ode,H}=\pairing{\ode,\res H}=\pairing{\ode,f}\neq 0$ that $\resdual\ode$ does not vanish on $U$. Hence $x\in\supp\resdual\ode$. This shows that $\supp{\ode}\subseteq\supp{\resdual \ode}$.
	
	We turn to the statement on the range of $\resdual$.
	
	From what we have already established, it is clear that the support of $\resdual\ode$ is contained in $\tstwo$ for all $\ode\in\contctstwoRod$. Conversely, suppose that $\Ode\in\contctsRod$ is such that $\supp{\Ode}\subseteq\tstwo$. We shall establish the existence of a $\ode\in\contctstwoRod$ such that $\resdual\ode=\Ode$, as follows. Let $f\in\contctstwoR$. Using \cref{res:extension}, we choose $F\in\contctsR$ such that $\res F=f$, and we define $\ode:\contctstwoR\to\RR$ by setting  $\pairing{\ode,f}\coloneqq\pairing{\Ode,F}$. We shall show that this is well defined. For this, it is clearly sufficient to show that $\pairing{\Ode,F}=0$ whenever $F\in\contctsR$ is such that $\res F=0$. Fix such an $F$, and choose an open and relatively compact neighbourhood $U$ of $\supp{F}$ in $\ts$. Then there exists a constant $M\geq 0$ such that $\abs{\pairing{\Ode,G}}\leq M\norm{G}_{\infty,\ts}$ for all $G\in\contc(\ts,\RR;\overline{U})$. Let  $\varepsilon>0$ be fixed, and set $V_\varepsilon\coloneqq\desset{x\in\ts: \abs{F(x)}<\varepsilon}$. Since $\res F=0$, $V_\varepsilon$ is an open neighbourhood of $\tstwo$ in $\ts$; in particular, $V_\varepsilon$ is an open neighbourhood of $\tstwo\cap\supp{F}$ in $\ts$. Take an open and relatively compact subset $W_\varepsilon$ of $\ts$ such that $\tstwo\cap \supp{F}\subseteq W_\varepsilon\subseteq\overline{W_\varepsilon}\subseteq V_\varepsilon$, and take $G_\varepsilon\in\contctsR$ such that $0\leq G_\varepsilon\leq 1$, $G_\varepsilon=1$ on $\overline{W_\varepsilon}$, and $\supp{G_\varepsilon}\subseteq V_\varepsilon$.
	
	Let $x\in\ts$, and suppose that $(FG_\varepsilon-F)(x)\neq 0$. Then certainly $G_\varepsilon(x)\neq 1$, so that $x\notin\overline{W_\varepsilon}$. In particular, $x\notin W_\varepsilon$. We conclude that $\supp(FG_\varepsilon-F)\subseteq \ts\setminus W_\varepsilon$. Evidently, $\supp(FG_\varepsilon-F)\subseteq\supp{F}$, so   $\supp(FG_\varepsilon-F)\subseteq(\ts\setminus W_\varepsilon)\cap\supp{F}$. Hence
	\begin{align*}
	\supp(FG_\varepsilon-F)\cap\supp{\Ode}&\subseteq
	\supp(FG_\varepsilon-F)\cap\tstwo
	\\ &\subseteq (\ts\setminus W_\varepsilon)\cap\tstwo\cap\supp{F}=\emptyset,
	\end{align*}
	since $\tstwo\cap\supp{F}\subseteq W_\varepsilon$.
	It follows from this that $\pairing{\Ode,F}=\pairing{\Ode,FG_\varepsilon}$. Since, in addition,  $FG_\varepsilon\in\contc(\ts,\RR;\overline{U})$ and $\norm{FG_\varepsilon}_{\infty,\ts}\leq\varepsilon$, we have $\abs{\pairing{\Ode,FG_\varepsilon}}\leq\varepsilon M_{\overline{U}}$.
	
	We thus see that $\abs{\pairing{\Ode,F}}\leq\varepsilon M_{\overline{U}}$ for all $\varepsilon>0$. Hence $\pairing{\Ode,F}=0$. This establishes our claim.
	
	Now that we know that the map $\ode:\contctstwoR\to\RR$ is well defined, it is immediate that it is linear. Combining the facts that a positive $f\in\contctstwoR$ has a positive extension, as asserted by \cref{res:extension}, and that $\Ode=\pos{\Ode}-\nega{\Ode}$ in $\contctsRod$, it is easy to see that $\ode\in\contctstwoRod$. Finally, for $F\in\contctstwoR$, we have, using the definition of $\ode$, that  $\pairing{\resdual\ode,F}=\pairing{\ode,\res  F}=\pairing{\Ode,F}$. Hence $\resdual\ode=\Ode$.
	
	We have now shown that the image of $\contctstwoRod$ under $\resdual$ is the subset of $\contctsRod$ that consists of all elements $\Ode$ of $\contctsRod$ such that $\supp{\Ode}\subseteq\tstwo$. Since such a subset of $\contctsRod$ is an order ideal of $\contctsRod$ for an arbitrary subset $\tstwo$ of $\ts$, the proof of the statement on the range of $\resdual$ is complete.
	
	We turn to the final statement.
	
	Let $g$ be a bounded Borel measurable function on $\tstwo$, and let $\ode\in\contctstwoRod$. Suppose that $f\in\contctsR$. Choose a non-empty, open, relatively compact neighbourhood $U$ of $\supp{f}$ in $\ts$; we may suppose that $U\cap\tstwo\neq\emptyset$. Then $U\cap\tstwo$ is a non-empty, open, relatively compact neighbourhood of $\supp(\res f)$ in $\tstwo$. There exists a unique regular Borel measure $\mu$ on $U\cap\tstwo$ such that
	\begin{equation}\label{eq:equal_integrals_zero}
	\pairing{\ode,h}=\int_{U\cap Y}\! h\di{\mu}
	\end{equation}
	for all $h\in\contc(\tstwo,\RR;U\cap Y)$. Suppose that $A$ is an arbitrary Borel subset of $U$, and set $\widetilde \mu(A)\coloneqq\mu(A\cap(U\cap Y))$. The fact that $U\cap Y$ is closed in $U$ implies that this defines a regular Borel measure $\widetilde{\mu}$ on $U$. It is easily seen that
	\begin{equation}\label{eq:equal_integrals_one}
	\int_U\! k\di{\widetilde\mu}=\int_{U\cap Y} R_{U\cap Y}k\di{\mu}
	\end{equation}
	for all bounded Borel measurable functions $k$ on $U$.
	
	On the other hand, there exists a unique regular Borel measure $\nu$ on $U$ such that
	\begin{equation}\label{eq:equal_integrals_two}
	\pairing{\resdual\ode,k}=\int_U\! k\di{\nu}
	\end{equation}
for all $k\in\contc(\ts,\RR;U)$.
	
	Combining \cref{eq:equal_integrals_zero,eq:equal_integrals_one,eq:equal_integrals_two}, we see that, for $k\in\contc(\ts,\RR;U)$, we have
	\[
	\int_U\!k\di{\nu}=\pairing{\resdual\ode,k}=\pairing{\ode,\res k}=\int_{U\cap \tstwo}\!R_{U\cap\tstwo}k\di{\mu}=
	\int_U\!k\di{\widetilde{\mu}}.
	\]
	It follows that $\nu=\widetilde{\mu}$.

	Using the definitions of $\widetilde g\resdual\ode$ and $\resdual(g\ode)$, we see that this implies that
	\begin{equation*}
	\pairing{\widetilde g\resdual\ode,f}=\int_U\!\widetilde{g}f\di{\nu}=\int_{U}\!\widetilde{g}f\di{\widetilde{\mu}}=\int_{U\cap Y}\! g\res f\di{\mu}=\pairing{g\ode,\res f}=\pairing{\resdual(g\ode),f}.
	\end{equation*}
	Hence $\widetilde g\resdual\ode=\resdual(g\ode)$.
\end{proof}

We are not aware of earlier results in the vein of \cref{res:embedding_of_order_duals}. Bourbaki introduces  restrictions of his Radon measures in \cite[III, $\mathsection$~2, No.~1 and IV, $\mathsection$~5, No.~7]{bourbaki_INTEGRATION_VOLUME_I_CHAPTERS_1-6_SPRINGER_EDITION:2004}, but does not seem to consider what are essentially extensions as in \cref{res:embedding_of_order_duals}.

\section{Embedding familiar vector lattices into $\contctsRod$}\label{sec:embedding_familiar_vector_lattices}

\noindent In this section, $\ts$ is a non-empty, locally compact space. We shall see how various familiar vector lattices can be embedded into $\contctsRod$.

\smallskip

Let $\mu\in\regularextposmeasts$ be a positive regular Borel measure on $\ts$. Suppose that $g:\ts\to\RR$ is Borel measurable. Then $g$ is \emph{locally integrable with respect to $\mu$}, or \emph{locally $\mu$-integrable} if
\[
\int_K\! \abs{g(x)}\di{\mu}<\infty
\]
for every compact subset $K$ of $\ts$. We shall identify two locally $\mu$-integrable functions $g_1$ and $g_2$ that are locally $\mu$-almost everywhere equal, i.e., which are such that
\[
\mu(\desset{x\in K: g_1(x)\neq g_2(x)})=0
\]
for all compact subsets $K$ of $\ts$.
The equivalence classes of locally $\mu$-integrable functions on $\ts$ form a vector lattice when the vector space operations and ordering are defined pointwise locally almost everywhere using representatives of equivalence classes. The vector lattice of equivalence classes thus obtained is denoted by $\Elloneloc(\ts,\borelsalg,\mu,\RR)$.

We shall shortly show that there exists a canonical lattice isomorphism $\Emb$ from  $\Elloneloc(\ts,\borelsalg,\mu,\RR)$ into $\contctsRod$; see \cref{res:homomorphism_from_L1loc_into_contctsRod}, below. The spaces $\Ell^p(\ts,\borelsalg,\mu,\RR)$ for $1\leq p<\infty$ are sublattices of $\Elloneloc(\ts,\borelsalg,\mu,\RR)$; see \cref{res:sublattices_of_Elloneloc}, below. For $1\leq p<\infty$, the restrictions of $\Emb$ to these sublattices will, therefore, yield embeddings of the vector lattices $\Ell^p(\ts,\borelsalg,\mu,\RR)$ as vector sublattices of $\contctsRod$; see \cref{res:embedding_of_Lp_into_ordercontcRod}, below.

We shall need the following auxiliary result, which can be found as \cite[Corollary to Lusin's Theorem~2.24]{rudin_REAL_AND_COMPLEX_ANALYSIS_THIRD_EDITION:1987}, for example.

\begin{proposition}\label{res:corollary_to_lusins_theorem}
	Let $\ts\!$ be a non-empty, locally compact space, let $\aux$ be a bounded Borel measurable function on $\ts$, let $\mu\in\regularextposmeasts$, and let $A\in\borelsalg$ be such that $\mu(A)<\infty$. Suppose that $\aux$ vanishes outside $A$ and that $\norm{\aux}_\infty\leq 1$. Then there exists a sequence $(\auxn)$ in $\contctsR$ such that $\norm{\auxn}_\infty\leq 1$ for all $n\geq 1$, and $\aux(x)=\lim_{n\to\infty} \auxn(x)$ for $\mu$-almost all $x$ in $\ts$.
\end{proposition}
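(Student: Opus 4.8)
The plan is to deduce this sequential, almost-everywhere version of Lusin's theorem from the single-function, approximation-in-measure version by a standard Borel--Cantelli argument. Recall that Lusin's theorem, in its basic form, asserts that, since $\aux$ is Borel measurable, vanishes outside the set $A$ of finite measure, and is bounded, for each $\varepsilon>0$ there exists $g\in\contctsR$ with $\mu(\desset{x\in\ts:\aux(x)\neq g(x)})<\varepsilon$. First I would apply this with $\varepsilon=2^{-n}$ to produce, for each $n\geq 1$, a function $h_n\in\contctsR$ with $\mu(\desset{x\in\ts:\aux(x)\neq h_n(x)})<2^{-n}$. This already gives continuous, compactly supported approximants whose exceptional sets shrink summably fast; it remains only to secure the uniform bound and to upgrade convergence in measure to pointwise convergence almost everywhere.

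The norm bound $\norm{\auxn}_\infty\leq 1$ I would arrange by truncation. Let $r:\RR\to[-1,1]$ be the continuous retraction $r(t)\coloneqq\max(-1,\min(t,1))$, and set $\auxn\coloneqq r\circ h_n$. Since $r(0)=0$ and $r$ is continuous, each $\auxn$ again lies in $\contctsR$ (its support is contained in $\supp{h_n}$) and satisfies $\norm{\auxn}_\infty\leq 1$. Moreover, because $\norm{\aux}_\infty\leq 1$ we have $r(\aux(x))=\aux(x)$ for every $x\in\ts$, so whenever $h_n(x)=\aux(x)$ we also have $\auxn(x)=\aux(x)$; consequently the set $E_n\coloneqq\desset{x\in\ts:\aux(x)\neq\auxn(x)}$ is contained in $\desset{x\in\ts:\aux(x)\neq h_n(x)}$, whence $\mu(E_n)<2^{-n}$.

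Finally I would establish almost-everywhere convergence. Since $\sum_{n\geq 1}\mu(E_n)\leq\sum_{n\geq 1}2^{-n}<\infty$, the Borel--Cantelli lemma applies: the set $E\coloneqq\bigcap_{N\geq 1}\bigcup_{n\geq N}E_n$ satisfies $\mu(E)=0$, because $\mu(E)\leq\mu(\bigcup_{n\geq N}E_n)\leq\sum_{n\geq N}\mu(E_n)\to 0$ as $N\to\infty$. For each $x\in\ts\setminus E$ there is some $N$ with $x\notin E_n$ for all $n\geq N$, that is, $\auxn(x)=\aux(x)$ for all $n\geq N$, so in particular $\auxn(x)\to\aux(x)$. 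Hence $\auxn\to\aux$ pointwise off the $\mu$-null set $E$, as required. The only genuinely non-routine point is the uniform bound, handled above by the truncation $r$; note also that the Borel--Cantelli estimate uses only countable subadditivity of $\mu$, so it remains valid for the possibly infinite measure $\mu$, each $E_n$ having finite measure.
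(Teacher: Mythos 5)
Your argument is correct, and it is essentially the proof of the result in the reference the paper cites: the paper gives no proof of its own but points to the Corollary to Lusin's Theorem~2.24 in Rudin's \emph{Real and Complex Analysis}, whose proof is exactly your combination of Lusin's theorem with $\varepsilon=2^{-n}$, a Borel--Cantelli argument, and the truncation $r(t)=\max(-1,\min(t,1))$ to secure the uniform bound (in Rudin the truncation is absorbed into the statement of Lusin's theorem itself, which already guarantees $\sup\abs{g}\leq\sup\abs{\aux}$). No gaps.
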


\begin{proposition}\label{res:homomorphism_from_L1loc_into_contctsRod}
	Let $\ts$ be a non-empty, locally compact space, and suppose that $\mu\in\regularextposmeasts$. For $g\in\Elloneloc(\ts,\borelsalg,\mu,\RR)$, set
	\[
	\pairing{\emb{g},f}\coloneqq\int_\ts\! fg\di{\mu}
	\]
	for $f\in\contctsR$. Then $\emb{g}\in\contctsRod$, and the map $\Emb:g\mapsto\emb{g}$ defines an injective lattice homomorphism $\Emb:\Elloneloc(\ts,\borelsalg,\mu,\RR)\to\contctsRod$. Suppose that $h$ is a bounded Borel measurable function on $\ts$. Then $\emb{hg}=h\emb{g}$. Furthermore, $\supp{\emb{g}}\subseteq\supp{g}$ for $g\in\contctsR$.
\end{proposition}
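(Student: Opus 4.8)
The plan is to verify the four assertions in turn, the only non-routine point being that $\Emb$ is a lattice homomorphism. First I would check that $\emb{g}$ is a well-defined element of $\contctsRod$. Since $g$ is locally $\mu$-integrable and every $f\in\contctsR$ has compact support, the integral $\int_\ts fg\di\mu$ is finite, so $\emb{g}$ is a well-defined linear functional. Writing $g=\pos g-\nega g$ with $\pos g,\nega g$ nonnegative and locally $\mu$-integrable, I observe that $\emb{\pos g}$ and $\emb{\nega g}$ are positive linear functionals (they map $\pos{\contctsR}$ into $\pos\RR$), hence order bounded, since a positive functional carries each order interval $[a,b]$ into $[\emb{\pos g}(a),\emb{\pos g}(b)]$. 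Therefore $\emb{g}=\emb{\pos g}-\emb{\nega g}\in\contctsRod$. The same computation shows that $\Emb$ is linear and positive.

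The heart of the argument is that $\Emb$ is a lattice homomorphism, for which, by the criterion recalled in Section~\ref{sec:vector_lattices_and_banach_lattices}, it suffices to prove $\abs{\emb{g}}=\emb{\abs g}$ for each $g$. The inequality $\abs{\emb{g}}\le\emb{\abs g}$ is automatic from positivity, since $\abs{\emb{g}}=\abs{\emb{\pos g}-\emb{\nega g}}\le\emb{\pos g}+\emb{\nega g}=\emb{\abs g}$. For the reverse inequality I would use the Riesz--Kantorovich formula \eqref{eq:RK1}, which gives, for $f\in\pos{\contctsR}$,
\[
\abs{\emb{g}}(f)=\sup\set{\abs{\emb{g}(u)}:u\in\contctsR,\ \abs u\le f}.
\]
Setting $K\coloneqq\supp f$, so that $\mu(K)<\infty$, I would apply the Lusin-type Proposition~\ref{res:corollary_to_lusins_theorem} to $\aux\coloneqq(\sgn g)\indicator_K$, obtaining a sequence $(\auxn)$ in $\contctsR$ with $\norm{\auxn}_\infty\le1$ and $\auxn\to\aux$ $\mu$-almost everywhere. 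Then $u_n\coloneqq f\auxn$ lies in $\contctsR$ and satisfies $\abs{u_n}\le f$, so each $u_n$ is admissible in the supremum. Since $f\auxn g\to f(\sgn g)g=f\abs g$ pointwise $\mu$-a.e., while $\abs{f\auxn g}\le f\abs g$ with $\int f\abs g\di\mu<\infty$, dominated convergence yields $\emb{g}(u_n)=\int f\auxn g\di\mu\to\int f\abs g\di\mu=\emb{\abs g}(f)$. Hence $\abs{\emb{g}}(f)\ge\emb{\abs g}(f)$, and equality follows.

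Finally I would dispatch the remaining claims. For injectivity: if $\emb{g}=0$ then $\emb{\abs g}=\abs{\emb{g}}=0$, so $\int f\abs g\di\mu=0$ for all $f\in\pos{\contctsR}$; choosing for each compact $K$, by Urysohn's lemma, an $f\in\pos{\contctsR}$ with $\indicator_K\le f$ gives $\int_K\abs g\di\mu=0$, whence $g=0$ in $\Elloneloc(\ts,\borelsalg,\mu,\RR)$. For the identity $\emb{hg}=h\emb{g}$ (noting $hg$ is again locally $\mu$-integrable as $h$ is bounded): given $f\in\contctsR$, pick a relatively compact open $U\supseteq\supp f$; since $\overline U$ is compact, $g$ is $\mu$-integrable on $U$, so $g\di\mu$ defines a finite signed regular Borel measure on $U$ that represents $\emb{g}$ on $\contc(\ts,\RR;U)$, and by the uniqueness in the construction of the product in Section~\ref{sec:locally_compact_spaces} it coincides with the measure $\mu_U$ used there. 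Consequently $\pairing{h\emb{g},f}=\int_U fh\di{\mu_U}=\int_\ts fhg\di\mu=\pairing{\emb{hg},f}$. For the support statement with $g\in\contctsR$: if $\supp f\subseteq\ts\setminus\supp g$ then $fg\equiv0$, so $\pairing{\emb{g},f}=0$; thus $\emb{g}$ vanishes on the open set $\ts\setminus\supp g$, giving $\supp{\emb{g}}\subseteq\supp g$.

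The main obstacle is the reverse inequality $\emb{\abs g}\le\abs{\emb{g}}$ in the lattice-homomorphism step. The natural test function $f\cdot(\sgn g)$ that would compute $\int f\abs g\di\mu$ directly is not continuous, and the whole point is to replace it by continuous functions of controlled uniform norm; this is exactly what the Lusin-type Proposition~\ref{res:corollary_to_lusins_theorem} supplies, with dominated convergence converting the resulting almost-everywhere approximation back into the required value of the integral.
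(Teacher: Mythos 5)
Your proof is correct and follows essentially the same route as the paper's: the reverse inequality $\emb{\abs{g}}\leq\abs{\emb{g}}$ is obtained in both cases by applying the Lusin-type \cref{res:corollary_to_lusins_theorem} to $\sgn(g)$ truncated to $\supp{f}$ and passing to the limit with dominated convergence, and the injectivity argument is the same. The only difference is that you spell out the multiplication and support claims, which the paper dismisses as clear; your added details there are sound.
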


\begin{proof}
	Let $g\in\Elloneloc(\ts,\borelsalg,\mu,\RR)$. It is clear that $\emb{g}\in\contctsRod$. We shall first prove that $\Emb$ is a lattice homomorphism by showing that $\abs{\emb{g}}=\emb{\abs{g}}$. For this, we apply \cref{eq:RK3} to see that
	\begin{equation}\label{eq:riesz_kantorovich_for_L1loc}
	\begin{split}
	\pairing{\abs{\emb{g}},f}&=\sup\desset{\pairing{\emb{g},h}:h\in\contctsR, \,\abs{h}\leq f}\\
	&=\sup\lrdesset{\int_\ts\! hg\di{\mu}:h\in\contctsR,\,\abs{h}\leq f}
	\end{split}
	\end{equation}
	for $f\in\pos{\contctsR}$.
	
	Fix $f\in\pos{\contctsR}$, and take $h\in\contctsR$ with $\abs{h}\leq f$. Then
	\[
	\int_\ts\! hg\di{\mu}\leq \lrabs{\int_\ts\! hg\di{\mu}}\leq\int_\ts\! \abs{h}\abs{g}\di{\mu}\leq\int_\ts\! f\abs{g}\di{\mu}=\pairing{\emb{\abs{g}},f}.
	\]
	This shows that
	\begin{equation}\label{eq:first_inequality_for_L1loc}
	\sup\lrdesset{\int_\ts\! hg\di{\mu}:h\in\contctsR,\,\abs{h}\leq f}\leq\pairing{\emb{\abs{g}},f}.
	\end{equation}
	
	For the reverse inequality, we define $\aux:\ts\to\RR$ by
	\[
	\aux(x)=
	\begin{cases}
	0&\text{ if }x\notin\supp{f},\\
	\sgn(g)&\text{ if }x\in\supp{f}.
	\end{cases}
	\]
	Since $\supp{f}$ is compact, it has finite $\mu$-measure, so that \cref{res:corollary_to_lusins_theorem} yields a sequence $(\auxn)$ in $\contctsR$ such that $\norm{\auxn}_\infty\leq 1$ for all $n\geq 1$, and $\auxn(x)\to \aux(x)$ for $\mu$-almost all $x$ in $\ts$. Note that $\auxn f\in\contctsR$, that $\abs{\auxn f}\leq f$ for all $n\geq 1$, and that
	\begin{align*}
	\lim_{n\to\infty}\int_\ts\!(\auxn f)g\di{\mu}&=\lim_{n\to\infty}\int_\ts\!\ind_{\supp{f}}f\auxn g\di{\mu}=\int_\ts\!\ind_{\supp{f}}f\abs{g}\di{\mu}\\
	&=\int_\ts\! f\abs{g}\di{\mu}
	=\pairing{f,\emb{\abs{g}}}.
	\end{align*}
	Here the dominated convergence theorem was applied in the second step, and this is valid since $\ind_{\supp{f}}f\abs{g}$ is integrable.  We thus see that
	\begin{equation}\label{eq:second_inequality_for_L1loc}
	\sup\lrdesset{\int_\ts\! hg\di{\mu}:h\in\contctsR,\,\abs{h}\leq f}\geq\pairing{\emb{\abs{g}},f}.
	\end{equation}
	Combining \cref{eq:first_inequality_for_L1loc,eq:second_inequality_for_L1loc}, we obtain
	\[
	\sup\lrdesset{\int_\ts\! hg\di{\mu}:h\in\contctsR,\,\abs{h}\leq f}=\pairing{\emb{\abs{g}},f},
	\]
	and then \cref{eq:riesz_kantorovich_for_L1loc} shows that $\pairing{\abs{\emb{g}},f}=\pairing{\emb{\abs{g}},f}$. Hence  $\abs{\emb{g}}=\emb{\abs{g}}$.
	
	It is now easy to prove that $\Emb$ is injective. Indeed, let $g\in\Elloneloc(\ts,\borelsalg,\mu,\RR)$ be such that $\emb{g}=0$. Then also $\emb{\abs{g}}=\abs{\emb{g}}=0$. Suppose that $K$ is a compact subset of $\ts$, and take $f\in\pos{\contctsR}$ such that $f=1$ on $K$. Then
	\[
	\int_K\! \abs{g}\di{\mu}\leq\int_\ts\!f\abs{g}\di{\mu}=\pairing{\emb{\abs{g}},f}=0.
	\]
	Hence $g$ is locally $\mu$-almost everywhere equal to zero, as required.

	The statements on the multiplication by bounded Borel measurable functions and on supports are clear.
\end{proof}

\begin{remark}
	\cref{res:homomorphism_from_L1loc_into_contctsRod} also  follows from \cite[V, $\mathsection$~5, No.~2, Corollary to Proposition~2]{bourbaki_INTEGRATION_VOLUME_I_CHAPTERS_1-6_SPRINGER_EDITION:2004}. Bourbaki's approach is different from ours. It does not use the dominated convergence theorem, for example, as there are no integrals present at all.
\end{remark}

\begin{lemma}\label{res:sublattices_of_Elloneloc}
Let $\ts$ be a non-empty, locally compact space, let $1\leq p<\infty$, and let $\mu\in\regularextposmeasts$. Then $\Ell^p(\ts,\borelsalg,\mu,\RR)$ is a vector sublattice of $\Elloneloc(\ts,\borelsalg,\mu)$.
\end{lemma}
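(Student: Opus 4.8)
The plan is to exhibit the canonical map from $\Ell^p(\ts,\borelsalg,\mu,\RR)$ into $\Elloneloc(\ts,\borelsalg,\mu,\RR)$ explicitly, verify that it is a well-defined, injective, linear map, and then observe that it automatically preserves the lattice operations because both spaces carry the pointwise (locally) almost-everywhere ordering. The map sends the $\Ell^p$-class of a function $g$ to the $\Elloneloc$-class of the same function. For this to make sense I must first check that a $p$-integrable $g$ is locally $\mu$-integrable: for $p=1$ this is immediate from $\int_K\abs{g}\di{\mu}\leq\int_\ts\abs{g}\di{\mu}<\infty$, while for $1<p<\infty$ it follows from H\"older's inequality, since $\int_K\abs{g}\di{\mu}\leq\norm{g}_p\,\mu(K)^{1/q}<\infty$ for each compact $K\subseteq\ts$, where $q$ is the conjugate exponent of $p$ and $\mu(K)<\infty$ because $\mu$ is a Borel measure. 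The map is well defined because functions agreeing $\mu$-almost everywhere on $\ts$ agree $\mu$-almost everywhere on every compact subset, and linearity is then clear.

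Next I would record that, since the vector lattice structures on both $\Ell^p(\ts,\borelsalg,\mu,\RR)$ and $\Elloneloc(\ts,\borelsalg,\mu,\RR)$ are defined by the pointwise almost-everywhere ordering (globally, respectively locally), the supremum of two classes in either space is represented by the pointwise maximum of representatives. Consequently, for $f,g\in\Ell^p(\ts,\borelsalg,\mu,\RR)$ the element $f\vee g$ — which already lies in $\Ell^p(\ts,\borelsalg,\mu,\RR)$ since that space is a vector lattice by \cref{ex:Ellp_spaces_one} — is mapped precisely to the supremum of the images of $f$ and $g$ computed in $\Elloneloc(\ts,\borelsalg,\mu,\RR)$. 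Thus the map preserves finite suprema, so that once its injectivity is established, its image is a vector sublattice and we may identify $\Ell^p(\ts,\borelsalg,\mu,\RR)$ with it.

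The main obstacle is this injectivity, and it is the only point at which the regularity of $\mu$ is used. Suppose $g\in\Ell^p(\ts,\borelsalg,\mu,\RR)$ maps to $0$, i.e.\ $g$ is locally $\mu$-almost everywhere zero; I must deduce that $g$ is $\mu$-almost everywhere zero on all of $\ts$. Setting $E_n\coloneqq\desset{x\in\ts:\abs{g(x)}>1/n}$, Chebyshev's inequality gives $\mu(E_n)\leq n^p\int_\ts\abs{g}^p\di{\mu}<\infty$, so each $E_n$ has finite measure and $\desset{x\in\ts:g(x)\neq 0}=\bigcup_n E_n$ is $\sigma$-finite. I would then invoke the inner regularity of a positive regular Borel measure on sets of finite measure: were $\mu(E_n)>0$, there would be a compact $K\subseteq E_n$ with $\mu(K)>0$, contradicting the fact that $g$ vanishes $\mu$-almost everywhere on $K$ while $\abs{g}>1/n$ throughout $E_n$. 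Hence $\mu(E_n)=0$ for every $n$, and therefore $\mu(\desset{x\in\ts:g(x)\neq 0})=0$. The one technical point needing care is precisely this inner-regularity statement for finite-measure (equivalently, $\sigma$-finite) sets, which goes beyond the defining requirement that $\mu$ be inner regular merely on open subsets; it is, however, a standard consequence of that definition for positive regular Borel measures.
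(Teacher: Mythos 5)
Your proposal is correct and follows essentially the same route as the paper's proof: H\"older's inequality for local integrability, the exhaustion of $\desset{x\in\ts: g(x)\neq 0}$ by the finite-measure sets $\desset{x\in\ts:\abs{g(x)}>1/n}$, and the inner regularity of $\mu$ on finite-measure Borel sets (the paper cites \cite[Proposition~7.5]{folland_REAL_ANALYSIS_SECOND_EDITION:1999} for exactly the point you flag as needing care). The only cosmetic difference is that the paper deduces $\mu(K)=0$ for compact $K\subseteq A_n$ directly from $\tfrac{1}{n}\mu(K)\leq\int_K\abs{g}\di{\mu}=0$ rather than by contradiction.
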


\begin{proof}
If a measurable function is $\mu$-almost everywhere equal zero, then it is clearly locally $\mu$-almost everywhere equal to zero. Furthermore, H\"older's inequality implies that every $p$-integrable measurable function is locally integrable.  Hence there exists a canonical lattice homomorphism from $\Ell^p(\ts,\borelsalg,\mu,\RR)$ into $\Elloneloc(\ts,\borelsalg,\mu)$. We need to show that this homomorphism is injective. To this end, suppose that $g$ is a measurable function on $\ts$ such that
\[
\int_\ts\!\abs{g}^p\di{\mu}<\infty
\]
and
\[
\int_K\! \abs{g}\di{\mu}=0
\]
for every compact subset $K$ of $\ts$.
For $n=1,2,\dotsc$, set
\[
A_n\coloneqq\desset{x\in\ts: \abs{g(x)}\geq 1/n}.
\]
Then $\mu(A_n)<\infty$ and
\begin{equation}\label{eq:union}
\desset{x\in\ts: g(x)\neq 0}=\bigcup_{n=1}^\infty A_n.
\end{equation}
Take $n\geq 1$. Since $\mu(A_n)<\infty$, \cite[Proposition~7.5]{folland_REAL_ANALYSIS_SECOND_EDITION:1999} shows that
\begin{equation}\label{eq:measure_is_zero}
\mu(A_n)=\sup\desset{\mu(K):K\text{ compact and }K\subseteq A_n}.
\end{equation}
Suppose that $K$ is a compact subset of $\ts$ such that $K\subseteq A_n$. Then
\[
\frac{1}{n}\mu(K)=\int_K\!\frac{1}{n}\di\mu\leq\int_K\!\abs{g}\di{\mu}=0.
\]
Hence \cref{eq:measure_is_zero} shows that $\mu(A_n)=0$, and then \cref{eq:union} implies that $g$ is $\mu$-almost everywhere equal to zero.
\end{proof}

We can now establish our embedding theorem for $\Ell^p$-spaces.

\begin{theorem}\label{res:embedding_of_Lp_into_ordercontcRod}
	Let $\ts$ be a non-empty, locally compact space, let $\mu\in\regularextposmeasts$, and let $1\leq p<\infty$.
	For $g\in\Ell^p(\ts,\borelsalg,\mu,\RR)$, set
	\[
	\pairing{\emb{g},f}\coloneqq\int_\ts\! fg\di{\mu}
	\]
	for $f\in\contctsR$. Then $\emb{g}\in\contctsRod$, and the map $\Emb:g\mapsto\emb{g}$ defines an injective lattice homomorphism  $\Emb:\Ell^p(\ts,\borelsalg,\mu,\RR)\to \contctsRod$. Suppose that $h$ is a bounded Borel measurable function on $\ts$. Then $\emb{hg}=h\emb{g}$.
	
	For $g\in\Ell^p(\ts,\borelsalg,\mu,\RR)$, set $\norm{\emb{g}}\coloneqq \norm{g}_p$, thus making $\Emb(\Ell^p(\ts,\borelsalg,\mu,\RR))$ into a Dedekind complete Banach lattice. Then the set
	\[
	\lrdesset{\emb{g} : g\in\contctsR,\,\supp{\pos{g}}\cap\supp{\nega{g}}=\emptyset}
	\]
	is a dense subset of the Banach lattice $\Emb(\Ell^p(\ts,\borelsalg,\mu,\RR))$. Consequently, the set
	\[
	\lrdesset{\ode\in\Emb(\Ell^p(\ts,\borelsalg,\mu,\RR)) : \supp\ode\text{ is compact and }\supp{\pos{\ode}}\cap\supp{\nega{\ode}}=\emptyset}
	\]
	is a dense subset of the Banach lattice $\Emb(\Ell^p(\ts,\borelsalg,\mu,\RR))$.
\end{theorem}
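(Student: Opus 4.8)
The plan is to prove the two density assertions in turn, the first being the substantive one and the second a short formal consequence. Since the norm on $\Emb(\Ell^p(\ts,\borelsalg,\mu,\RR))$ is defined by $\norm{\emb{g}}\coloneqq\norm{g}_p$ and $\Emb$ is linear and injective, the map $\Emb$ is an isometry onto its image; hence proving that the first set is dense amounts to showing that, given $g\in\Ell^p(\ts,\borelsalg,\mu,\RR)$ and $\varepsilon>0$, there exists $g'\in\contctsR$ with $\supp{\pos{g'}}\cap\supp{\nega{g'}}=\emptyset$ and $\norm{g-g'}_p<\varepsilon$. First I would use the density of $\contctsR$ in $\Ell^p(\ts,\borelsalg,\mu,\RR)$ for $1\le p<\infty$ (see \cite[Theorem~3.14]{rudin_REAL_AND_COMPLEX_ANALYSIS_THIRD_EDITION:1987}) to choose $f\in\contctsR$ with $\norm{g-f}_p<\varepsilon/2$. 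It then remains to approximate, in $p$-norm, the single function $f\in\contctsR$ by a function in $\contctsR$ whose positive and negative parts have disjoint supports.

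The key step is to apply \cref{res:separating_supports_for_continuous_compactly_supported_functions} to $f$, with a parameter $\delta>0$ to be fixed below, obtaining $\pos{h},\nega{h}\in\contctsR$ with $0\le\pos{f}-\pos{h}\le\delta\indicator_\ts$, $0\le\nega{f}-\nega{h}\le\delta\indicator_\ts$, and $\supp{\pos{h}}\cap\supp{\nega{h}}=\emptyset$. Set $g'\coloneqq\pos{h}-\nega{h}$. Because $\pos{h}$ and $\nega{h}$ have disjoint supports, at each point at most one of them is nonzero, so $\pos{g'}=\pos{h}$ and $\nega{g'}=\nega{h}$ pointwise; in particular $\supp{\pos{g'}}\cap\supp{\nega{g'}}=\emptyset$, as required. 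The error $f-g'=(\pos{f}-\pos{h})-(\nega{f}-\nega{h})$ is a difference of two nonnegative functions that are never simultaneously nonzero, each bounded by $\delta$ and vanishing off the compact set $K\coloneqq\supp{f}$; hence
\[
\abs{f-g'}\le\delta\,\indicator_K,\qquad\text{so}\qquad\norm{f-g'}_p\le\delta\,\mu(K)^{1/p}.
\]
Since $\mu$ is finite on the compact set $K$, the factor $\mu(K)^{1/p}$ is finite, and choosing $\delta$ with $\delta\,\mu(K)^{1/p}<\varepsilon/2$ gives $\norm{g-g'}_p<\varepsilon$, proving the first density statement. I expect this passage from the uniform estimate supplied by \cref{res:separating_supports_for_continuous_compactly_supported_functions} to a $p$-norm estimate to be the only point needing care: it works precisely because the approximation error is concentrated on the compact, hence finite-measure, set $\supp{f}$, which is what converts a uniformly small error into a $p$-norm small one.

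Finally I would deduce the second (``Consequently'') statement by showing that the dense set just produced is contained in the set appearing there. Let $g'\in\contctsR$ with $\supp{\pos{g'}}\cap\supp{\nega{g'}}=\emptyset$. Then $\emb{g'}\in\Emb(\Ell^p(\ts,\borelsalg,\mu,\RR))$, and $\supp{\emb{g'}}\subseteq\supp{g'}$ is compact by \cref{res:homomorphism_from_L1loc_into_contctsRod}. Moreover $\Emb$ is a lattice homomorphism, so $\pos{(\emb{g'})}=\emb{\pos{g'}}$ and $\nega{(\emb{g'})}=\emb{\nega{g'}}$; applying the same support bound to $\pos{g'},\nega{g'}\in\contctsR$ yields $\supp{\pos{(\emb{g'})}}\subseteq\supp{\pos{g'}}$ and $\supp{\nega{(\emb{g'})}}\subseteq\supp{\nega{g'}}$, which are disjoint. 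Thus every element of the first dense set lies in the second set, and a superset of a dense set is dense, which completes the argument.
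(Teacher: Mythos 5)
Your treatment of the two density statements is correct and essentially identical to the paper's: the paper likewise first approximates in $\Ell^p(\ts,\borelsalg,\mu,\RR)$ by an element of $\contctsR$, then applies \cref{res:separating_supports_for_continuous_compactly_supported_functions} and converts the resulting uniform estimate, supported on the compact set $\supp{f}$ of finite measure, into a $p$-norm estimate, and for the final step it identifies $\emb{\pos{g'}}$ and $\emb{\nega{g'}}$ as the positive and negative parts of $\emb{g'}$ via \cref{res:spatial_and_lattice_disjointness}, which is interchangeable with your (equally valid) observation that lattice homomorphisms commute with taking positive and negative parts. The only omission is that your proposal does not address the first paragraph of the theorem (that $\Emb$ is a well-defined injective lattice homomorphism compatible with multiplication by bounded Borel measurable functions, and that the image is a Dedekind complete Banach lattice under the transported norm), but, as in the paper, these points are immediate from \cref{res:homomorphism_from_L1loc_into_contctsRod,res:sublattices_of_Elloneloc}.
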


\begin{proof}
	It is clear from  \cref{res:homomorphism_from_L1loc_into_contctsRod,res:sublattices_of_Elloneloc} that $\Emb$ is an injective lattice homomorphism that is compatible with multiplication by bounded Borel measurable functions. We establish the remaining statements.
	
	It is obvious that $\Ell^p(\ts,\borelsalg,\mu,\RR)$ is a Dedekind complete Banach lattice when the norm is transported via the lattice isomorphism $\Emb$.
	
	We turn to the density statements. Let $h\in\contctsR$, and let $\varepsilon>0$. It follows from \cref{res:separating_supports_for_continuous_compactly_supported_functions} that there exists $g\in\contctsR$ such that $\supp{g}\subseteq\supp{h}$,  $\norm{h-g}_\infty<\varepsilon$, and $\supp{\pos{g}}\cap\supp{\nega{g}}=\emptyset$. Since then $\norm{h-g}_p\leq\varepsilon\mu(\supp h)^{1/p}$ and since $\contctsR$ is dense in $\Ell^p(\ts,\borelsalg,\mu,\RR)$, it follows that
	\[
	\lrdesset{g\in\contctsR: \,\supp{\pos{g}}\cap\supp{\nega{g}}=\emptyset}
	\]
	is a dense subset of $\Ell^p(\ts,\borelsalg,\mu,\RR)$. Applying the isometry $\Emb$, we see that
	\[
	\lrdesset{\emb{g} : g\in\contctsR,\,\supp{\pos{g}}\cap\supp{\nega{g}}=\emptyset}
	\]
	is a dense subset of the Banach lattice $\Emb(\Ell^p(\ts,\borelsalg,\mu,\RR))$.
	
	Suppose that $g\in\contctsR$ is such that $\supp{\pos{g}}\cap\supp{\nega{g}}=\emptyset$.
	It follows from the inclusions $\supp{\emb{\pos{g}}}\subseteq\supp{\pos{g}}$ and $\supp{\emb{\nega{g}}}\subseteq\supp{\nega{g}}$ that we also have $\supp{\emb{\pos{g}}}\cap\supp{\emb{\nega{g}}}=\emptyset$. Hence \cref{res:spatial_and_lattice_disjointness} shows that $\emb{\pos{g}}$ and $\emb{\nega{g}}$ are disjoint elements of $\contctsRod$, and this implies that the equality $\emb{g}=\emb{\pos{g}}-\emb{\nega{g}}$ gives the decomposition of $\emb{g}$ in $\contctsRod$ into its positive and negative part $\pos{\emb{g}}$ and $\nega{\emb{g}}$, respectively. The final density statement is now clear.
\end{proof}

We shall now show that $\regularrealmeasts$ can also be embedded as a vector sublattice of $\contctsRod$. For this, we shall use the following auxiliary result.  It is a slightly rephrased version of \cite[Theorem~6.12]{rudin_REAL_AND_COMPLEX_ANALYSIS_THIRD_EDITION:1987}, which is a consequence of the Radon--Nikod{\'y}m theorem.

\begin{proposition}\label{res:measure_and_its_variation}
	Let $\mu$ be a finite, real-valued measure on a $\sigma$-algebra of subsets of a set $\ts$.  Then there is a measurable function $\aux$ on $\ts$ such that $\abs{\aux(x)}=1$ for all $x\in\ts$ and $\aux\di\mu=\di{\abs{\mu}}$.
\end{proposition}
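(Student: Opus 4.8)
The plan is to obtain this from the polar decomposition of a real measure, which is the content of \cite[Theorem~6.12]{rudin_REAL_AND_COMPLEX_ANALYSIS_THIRD_EDITION:1987} specialised to the real case. First I would apply that theorem to produce a measurable function $h$ on $\ts$ with $\abs{h(x)}=1$ for all $x\in\ts$ and $\di\mu=h\di{\abs{\mu}}$. Since $\mu$ is real-valued, taking complex conjugates in this identity and using the essential uniqueness of the Radon--Nikod\'ym derivative shows that $h$ is real $\abs{\mu}$-almost everywhere; after redefining $h$ to be, say, $1$ on the exceptional $\abs{\mu}$-null set, one has $h(x)\in\set{-1,1}$ for all $x\in\ts$, while the identity $\di\mu=h\di{\abs{\mu}}$ is unaffected because $\mu\ll\abs{\mu}$.

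The key observation is then purely algebraic: as $h$ takes only the values $\pm1$, we have $h^2=1$ pointwise, so multiplying $\di\mu=h\di{\abs{\mu}}$ by $h$ gives $h\di\mu=h^2\di{\abs{\mu}}=\di{\abs{\mu}}$. Setting $\aux\coloneqq h$ thus yields a measurable function with $\abs{\aux(x)}=1$ for all $x\in\ts$ and $\aux\di\mu=\di{\abs{\mu}}$, as required.

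A cleaner, self-contained route that avoids the Radon--Nikod\'ym theorem altogether is to argue from the Hahn--Jordan decomposition. I would take a Hahn decomposition $\ts=P\cup N$ into disjoint measurable sets on which $\mu$ is respectively positive and negative, so that $\pos{\mu}(A)=\mu(A\cap P)$ and $\nega{\mu}(A)=-\mu(A\cap N)$ for every measurable $A$, and simply set $\aux\coloneqq\indicator_P-\indicator_N$. Then $\abs{\aux(x)}=1$ for \emph{every} $x\in\ts$, with no modification on null sets needed, and for each measurable $A$ one computes
\[
\int_A\aux\di\mu=\mu(A\cap P)-\mu(A\cap N)=\pos{\mu}(A)+\nega{\mu}(A)=\abs{\mu}(A),
\]
whence $\aux\di\mu=\di{\abs{\mu}}$. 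I do not expect any genuine obstacle here: the first route is a one-line rephrasing of a standard theorem, and the only point needing attention is arranging $\abs{\aux}=1$ everywhere rather than merely $\abs{\mu}$-almost everywhere, which the Hahn-decomposition route sidesteps entirely.
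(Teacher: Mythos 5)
Your proposal is correct, and your first route is precisely what the paper does: the paper offers no proof of its own, but presents the proposition as ``a slightly rephrased version'' of \cite[Theorem~6.12]{rudin_REAL_AND_COMPLEX_ANALYSIS_THIRD_EDITION:1987}, the rephrasing being exactly the observation that for a real measure the unimodular density $h$ in $\di{\mu}=h\di{\abs{\mu}}$ takes values $\pm1$ almost everywhere, so that multiplying through by $h$ inverts the relation to $h\di{\mu}=\di{\abs{\mu}}$. Your second route, via a Hahn decomposition $\ts=P\cup N$ and $\aux=\indicator_P-\indicator_N$, is a genuinely different and more elementary argument: it bypasses the Radon--Nikod\'ym theorem entirely, produces $\abs{\aux}=1$ everywhere with no adjustment on a null set, and the verification $\int_A\aux\di{\mu}=\pos{\mu}(A)+\nega{\mu}(A)=\abs{\mu}(A)$ is a one-line computation. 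What the Rudin-based route buys is only economy of citation (a single standard theorem does all the work, and it generalises verbatim to complex measures with $\aux=\bar h$); what the Hahn--Jordan route buys is self-containedness and the cleaner ``for all $x$'' conclusion, which is arguably better suited to the real-measure setting the paper actually works in. Both arguments are complete.
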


\begin{proposition}\label{res:embedding_of_M(X)}
	Let $\ts\!$ be a non-empty, locally compact space. For a finite, real-valued measure $\mu\in\regularrealmeasts$, set
	\[
	\pairing{\emb{\mu},f}\coloneqq\int_\ts\! f\di{\mu}
	\]
	for $f\in\contctsR$. Then $\emb{\mu}\in\contctsRod$, and the map $\Emb:\mu\mapsto\emb{\mu}$ defines an injective lattice homomorphism $\Emb:\regularrealmeasts\to\contctsRod$.
\end{proposition}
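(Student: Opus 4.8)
The plan is to mirror the proof of \cref{res:homomorphism_from_L1loc_into_contctsRod}, with the integrand $g$ there replaced by the measure $\mu$ and the function $\sgn(g)$ replaced by the polar decomposition of $\mu$ furnished by \cref{res:measure_and_its_variation}. The three things to check are that $\Emb$ is well defined, that it is injective, and that it is a lattice homomorphism; only the last of these is substantial.

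First I would dispose of the routine points. Writing $\mu=\pos{\mu}-\nega{\mu}$ for the Jordan decomposition of $\mu$ into finite positive regular Borel measures, one sees that $\emb{\mu}$ is the difference of the two positive linear functionals $f\mapsto\int_\ts f\di{\pos{\mu}}$ and $f\mapsto\int_\ts f\di{\nega{\mu}}$ on $\contctsR$. Positive functionals are order bounded, so $\emb{\mu}\in\contctsRod$ and $\Emb$ is well defined; linearity in $\mu$ is clear. For injectivity I would use that, since $\mu$ is finite, the estimate $\abs{\pairing{\emb{\mu},f}}\leq\norm{f}_\infty\,\abs{\mu}(\ts)$ shows that $\emb{\mu}$ is the restriction to $\contctsR$ of a continuous linear functional on $(\contotsR,\norm{\,\cdot\,}_\infty)$. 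As $\contctsR$ is dense in $\contotsR$, this extension is uniquely determined by $\emb{\mu}$, and the isometric isomorphism between the dual of $(\contotsR,\norm{\,\cdot\,}_\infty)$ and $(\regularrealmeasts,\norm{\,\cdot\,})$ recalled in \cref{sec:locally_compact_spaces} then forces $\mu=0$ whenever $\emb{\mu}=0$.

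For the lattice homomorphism property, which is the real content, I would show that $\abs{\emb{\mu}}=\emb{\abs{\mu}}$. By the Riesz--Kantorovich formula \cref{eq:RK1}, for $f\in\pos{\contctsR}$ we have
\[
\pairing{\abs{\emb{\mu}},f}=\sup\lrdesset{\int_\ts\! h\di\mu : h\in\contctsR,\,\abs{h}\leq f}.
\]
The inequality $\pairing{\abs{\emb{\mu}},f}\leq\pairing{\emb{\abs{\mu}},f}$ follows at once from the standard estimate $\bigabs{\int_\ts h\di\mu}\leq\int_\ts\abs{h}\di{\abs{\mu}}$ together with $\abs{h}\leq f$. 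For the reverse inequality I would invoke \cref{res:measure_and_its_variation} to fix a measurable $\aux$ with $\abs{\aux}=1$ and $\aux\di\mu=\di{\abs{\mu}}$, truncate it to vanish outside the compact set $\supp{f}$ (which has finite $\abs{\mu}$-measure, as $\abs{\mu}$ is finite), and apply \cref{res:corollary_to_lusins_theorem} to the positive measure $\abs{\mu}$ to obtain $\auxn\in\contctsR$ with $\norm{\auxn}_\infty\leq 1$ and $\auxn\to\aux$ $\abs{\mu}$-almost everywhere on $\supp{f}$. Then $\auxn f\in\contctsR$ and $\abs{\auxn f}\leq f$, and, since $f$ is a dominating function that is integrable for the finite measure $\abs{\mu}$, the dominated convergence theorem (applied to $\pos{\mu}$ and $\nega{\mu}$ separately) gives
\[
\lim_{n\to\infty}\int_\ts\!(\auxn f)\di\mu=\int_\ts\! f\aux\di\mu=\int_\ts\! f\di{\abs{\mu}}=\pairing{\emb{\abs{\mu}},f}.
\]
Hence the supremum above is at least $\pairing{\emb{\abs{\mu}},f}$, and combining the two inequalities yields $\abs{\emb{\mu}}=\emb{\abs{\mu}}$, which is exactly the lattice homomorphism property.

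I expect the only genuine obstacle to be this reverse inequality: one must realise the sign pattern of $\mu$ by bona fide continuous functions of modulus at most $f$. This is precisely where the polar decomposition of \cref{res:measure_and_its_variation} and the Lusin-type approximation of \cref{res:corollary_to_lusins_theorem} enter. Some care is needed because the $\auxn$ converge only $\abs{\mu}$-almost everywhere; restricting everything to the compact set $\supp{f}$, on which $\abs{\mu}$ is finite and $f$ serves as an integrable dominating function, is what both legitimises the dominated convergence step and allows the finite-measure hypothesis of \cref{res:corollary_to_lusins_theorem} to be met. Everything else is routine given the Riesz representation facts already recorded.
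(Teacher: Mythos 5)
Your proof is correct, and its substantial part\textemdash the identity $\abs{\emb{\mu}}=\emb{\abs{\mu}}$ via the Riesz--Kantorovich formula, the polar decomposition of \cref{res:measure_and_its_variation}, the Lusin-type approximation of \cref{res:corollary_to_lusins_theorem}, and dominated convergence\textemdash is essentially the paper's own argument. (Your truncation of $\aux$ outside $\supp{f}$ is harmless but unnecessary: since $\abs{\mu}$ is finite one may apply \cref{res:corollary_to_lusins_theorem} with $A=\ts$ directly, which is what the paper does; and your citation of \cref{eq:RK1} rather than \cref{eq:RK3} for the modulus formula is in fact the more accurate one.) Where you genuinely diverge is the injectivity step. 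The paper stays elementary: from $\emb{\mu}=0$ it deduces $\emb{\abs{\mu}}=0$, reduces to $\mu\geq 0$, and then uses the explicit supremum formula for $\mu(V)$ over $f\in\contctsR$ supported in an open set $V$ together with outer regularity to conclude $\mu=0$. You instead extend $\emb{\mu}$ by density to a continuous functional on $(\contotsR,\norm{\,\cdot\,}_\infty)$ and invoke the isometric identification of $\nd{\contotsR}$ with $\regularrealmeasts$. This is exactly the ``alternative proof'' the paper records in the remark following \cref{res:embedding_of_M(X)}: it is shorter, but it imports the uniqueness (and, if used for the whole proposition, the lattice) part of the Riesz representation theorem, whereas the paper's route avoids that dependence. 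Both are sound; your hybrid is a legitimate proof of the statement.
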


\begin{proof}
	Let $\mu\in\regularrealmeasts$. We shall prove that $\emb{\abs{\mu}}=\abs{\emb{\mu}}$.
	The proof for this is quite similar to the proof of \cref{res:homomorphism_from_L1loc_into_contctsRod}. Again we apply \cref{eq:RK3} to see that
	\begin{equation}\label{eq:riesz_kantorovich_for_M(X)}
	\begin{split}
	\pairing{\abs{\emb{\mu}},f}&=\sup\desset{\pairing{\emb{\mu},h}:h\in\contctsR, \,\abs{h}\leq f}\\
	&=\sup\lrdesset{\int_\ts\! h\di{\mu}:h\in\contctsR,\,\abs{h}\leq f}
	\end{split}
	\end{equation}
	for $f\in\pos{\contctsR}$.
	
	Fix $f\in\pos{\contctsR}$. If $h\in\contctsR$ and $\abs{h}\leq f$, then
	\[
	\int_\ts\! h\di{\mu}\leq \lrabs{\int_\ts\! h\di{\mu}}\leq\int_\ts\! \abs{h}\di{\abs{\mu}}\leq\int_\ts\! f\di{\abs{\mu}}=\pairing{\emb{\abs{\mu}},f}.
	\]
	This shows that
	\begin{equation}\label{eq:first_inequality_for_M(X)}
	\sup\lrdesset{\int_\ts\! h\di{\mu}:h\in\contctsR,\,\abs{h}\leq f}\leq\pairing{\emb{\abs{\mu}},f}.
	\end{equation}
	
For the reverse inequality, we use the unimodular measurable function $\aux$ such that $\aux \di\mu=\di\abs{\mu}$ that is supplied by \cref{res:measure_and_its_variation}. Since $\abs{\mu}$ is a finite measure, \cref{res:corollary_to_lusins_theorem} yields a sequence $(\auxn)$ in $\contctsR$ such that $\norm{\auxn}_\infty\leq 1$ for all $n\geq 1$, and $\auxn(x)\to\aux(x)$ for $\abs{\mu}$-almost all $x$ in $\ts$.
	Note that $\auxn f\in\contctsR$ and $\abs{\auxn f}\leq f$ for all $n\geq 1$, and that, by the dominated convergence theorem,
	\begin{equation*}
	\lim_{n\to\infty}\int_\ts\! (\auxn f)\di{\mu}=\int_\ts\! f\aux\di{\mu}
	=\int_\ts\! f\di{\abs{\mu}}
	=\pairing{f,\emb{\abs{\mu}}}.
	\end{equation*}
	We thus see that
	\begin{equation}\label{eq:second_inequality_for_M(X)}
	\sup\lrdesset{\int_\ts\! h\di{\mu}:h\in\contctsR,\,\abs{h}\leq f}\geq\pairing{\emb{\abs{\mu}},f}.
	\end{equation}
	Combining \cref{eq:first_inequality_for_M(X),eq:second_inequality_for_M(X)}, we obtain that
	\[
	\sup\lrdesset{\int_\ts\! h\di{\mu}:h\in\contctsR,\,\abs{h}\leq f}=\pairing{\emb{\abs{\mu}},f},
	\]
	and then \cref{eq:riesz_kantorovich_for_M(X)} shows that $\pairing{\abs{\emb{\mu}},f}=\pairing{\emb{\abs{\mu}},f}$. Hence $\abs{\emb{\mu}}=\emb{\abs{\mu}}$.
	
	It follows that $\Emb$ is a lattice homomorphism.
	
	Suppose that $\emb{\mu}=0$. We need to show that $\mu=0$. Since also $\emb{\abs{\mu}}=\abs{\emb{\mu}}=0$, we may suppose that $\mu\geq 0$. Let $V$ be a non-empty, open subset of $V$. One of the explicit formulas in the Riesz representation theorem (see \cite[Theorem~7.2]{folland_REAL_ANALYSIS_SECOND_EDITION:1999}) shows that
	\[
	\mu(V)=\sup\lrdesset{\int_\ts\! f\di{\mu}: f\in\contctsR,\,\supp{f}\subseteq V,\,0\leq f\leq \indicator_\ts}.
	\]
	Since all integrals in the set on the right-hand side are zero by assumption, $\mu$ vanishes on all open subsets of $\ts$. The outer regularity of $\mu$ at all Borel subsets of $\ts$ then implies that $\mu=0$.
	\end{proof}

\begin{remark}\quad
\begin{enumerate}
\item An alternative proof of \cref{res:embedding_of_M(X)} goes as follows. It is generally true that the norm dual $\nd{\vl}$ of a normed vector lattice $\vl$ is a vector sublattice of the order dual $\od{\vl}$ of $\vl$; see \cite[Theorem~30.8]{aliprantis_burkinshaw_PRINCIPLES_OF_REAL_ANALYSIS_THIRD_EDITION:1998}. Since $\nd{(\contctsR,\norm{\,\cdot\,}_\infty)}$ is (isometrically) lattice isomorphic to $\regularrealmeasts$,  it is now immediate that the map $\Ode$ in \cref{res:embedding_of_M(X)} is an injective lattice homomorphism.

This alternative approach uses the vector lattice part of the Riesz representation theorem, whereas our earlier proof does not.
\item There does not seem to be a result in the vein of \cref{res:embedding_of_M(X)} in \cite{bourbaki_INTEGRATION_VOLUME_I_CHAPTERS_1-6_SPRINGER_EDITION:2004}; presumably this is because the space $\regularrealmeasts$, which consists of measures in the sense of Caratheodory and Lebesgue, simply does not exist for Bourbaki.
\end{enumerate}
\end{remark}

We can now establish the following analogue of \cref{res:embedding_of_Lp_into_ordercontcRod}.

\begin{theorem}\label{res:embedding_of_M(X)_as_banach_lattice_with_properly_separated_elements}
	Let $\ts$ be a non-empty, locally compact space. For $\mu\in\regularrealmeasts $, set
\[
\pairing{\emb{\mu},f}\coloneqq\int_\ts\! f\di{\mu}
\]
for $f\in\contctsR$. Then $\emb{\mu}\in\contctsRod$, and the map $\Emb:\mu\mapsto\emb{\mu}$ defines an injective lattice homomorphism $\Emb:\regularrealmeasts\to\contctsRod$. Suppose that $h$ is a bounded Borel measurable function on $\ts$. Then $\emb{h\mu}=h\emb{\mu}$.

For $\mu\in\regularrealmeasts$, set $\norm{\emb{\mu}}\coloneqq \norm{\mu}$, thus making $\Emb(\regularrealmeasts)$ into a Dedekind complete Banach lattice. Then the set
\[
\lrdesset{\ode\in\Emb(\regularrealmeasts) : \supp\ode\text{ is compact and }\supp{\pos{\ode}}\cap\supp{\nega{\ode}}=\emptyset}
\]
is a dense subset of the Banach lattice $\Emb(\regularrealmeasts)$.
\end{theorem}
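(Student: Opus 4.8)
The first three assertions require essentially no new work. That $\Emb$ is a well-defined injective lattice homomorphism is precisely \cref{res:embedding_of_M(X)}. The identity $\emb{h\mu}=h\emb{\mu}$ is verified by pairing both sides with a test function $f\in\contctsR$: on a relatively compact open neighbourhood $U$ of $\supp f$ the local regular Borel measure representing $\emb{\mu}$ (in the sense of \cref{sec:locally_compact_spaces}) is, by uniqueness in the Riesz representation theorem, the restriction of $\mu$, so both $\pairing{h\emb{\mu},f}$ and $\pairing{\emb{h\mu},f}$ equal $\int_\ts fh\di{\mu}$. Finally, since $\norm{\emb{\mu}}\coloneqq\norm{\mu}$ transports the norm along the lattice isomorphism $\Emb$, the image $\Emb(\regularrealmeasts)$ is isometrically lattice isomorphic to the Dedekind complete Banach lattice $\regularrealmeasts$, and is therefore itself a Dedekind complete Banach lattice. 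The substance of the theorem is thus the concluding density statement, and here the route used for the $\Ell^p$-analogue \cref{res:embedding_of_Lp_into_ordercontcRod} is \emph{not} available: $\contctsR$ (viewed via densities) is not dense in $\regularrealmeasts$ for the total-variation norm, since point masses cannot be approximated. The separation of positive and negative parts must instead be carried out at the level of measures, using regularity and mutual singularity.

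The plan is as follows. Fix $\mu_0\in\regularrealmeasts$ and $\varepsilon>0$, and let $\mu_0=\pos{\mu_0}-\nega{\mu_0}$ be the Jordan decomposition, so that a Hahn decomposition supplies a Borel set $P$ with $\pos{\mu_0}(\ts\setminus P)=0$ and $\nega{\mu_0}(P)=0$. As $\pos{\mu_0}$ and $\nega{\mu_0}$ are \emph{finite} regular Borel measures, inner regularity on sets of finite measure furnishes compact sets $K^+\subseteq P$ and $K^-\subseteq\ts\setminus P$ with $\pos{\mu_0}(\ts\setminus K^+)<\varepsilon$ and $\nega{\mu_0}(\ts\setminus K^-)<\varepsilon$; crucially $K^+\cap K^-=\emptyset$. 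Set $\nu^+\coloneqq\indicator_{K^+}\pos{\mu_0}$ and $\nu^-\coloneqq\indicator_{K^-}\nega{\mu_0}$. Since $K^+$ and $K^-$ are compact, hence closed, these restrictions are again finite positive regular Borel measures, so $\mu\coloneqq\nu^+-\nu^-\in\regularrealmeasts$, and the claim is that $\emb{\mu}$ lies in the distinguished set and satisfies $\norm{\emb{\mu_0}-\emb{\mu}}<2\varepsilon$.

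For membership in the set, note that $\nu^\pm$ is concentrated on $K^\pm$, so $\emb{\nu^\pm}$ vanishes on the open set $\ts\setminus K^\pm$, giving $\supp{\emb{\nu^+}}\subseteq K^+$ and $\supp{\emb{\nu^-}}\subseteq K^-$. As these supports are disjoint, \cref{res:spatial_and_lattice_disjointness} yields $\emb{\nu^+}\perp\emb{\nu^-}$ in $\contctsRod$, and both are positive because $\Emb$ is positive. By the elementary vector-lattice fact recalled in Section~\ref{sec:vector_lattices_and_banach_lattices}—that if $x=y_1-y_2$ with $y_1,y_2\in\pos{(\contctsRod)}$ and $y_1\perp y_2$, then $y_1=\pos{x}$ and $y_2=\nega{x}$—we obtain $\pos{(\emb{\mu})}=\emb{\nu^+}$ and $\nega{(\emb{\mu})}=\emb{\nu^-}$. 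Hence $\supp{\pos{(\emb{\mu})}}\subseteq K^+$ and $\supp{\nega{(\emb{\mu})}}\subseteq K^-$ are disjoint, while $\supp{\emb{\mu}}\subseteq K^+\cup K^-$ is compact. For the estimate, $\mu_0-\mu=(\pos{\mu_0}-\nu^+)-(\nega{\mu_0}-\nu^-)=\indicator_{\ts\setminus K^+}\pos{\mu_0}-\indicator_{\ts\setminus K^-}\nega{\mu_0}$; the two summands are positive measures concentrated on $P$ and on $\ts\setminus P$ respectively, hence mutually singular, so $\norm{\mu_0-\mu}$ equals the sum of their masses, namely $\pos{\mu_0}(\ts\setminus K^+)+\nega{\mu_0}(\ts\setminus K^-)<2\varepsilon$. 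Since $\Emb$ is, by construction, isometric, $\norm{\emb{\mu_0}-\emb{\mu}}=\norm{\mu_0-\mu}<2\varepsilon$, which proves density.

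The main obstacle, and the point at which the argument diverges from the $\Ell^p$ case, is exactly the non-density of $\contctsR$ in the total-variation norm: one cannot first approximate by a continuous density and then invoke \cref{res:separating_supports_for_continuous_compactly_supported_functions}. The two technical points to watch are that the restrictions $\indicator_{K^\pm}\mu_0^{\pm}$ remain regular Borel measures (which I would confirm directly from the fact that $K^{\pm}$ are compact, so that their complements are open and can be adjoined to approximating open sets), and that $\emb{\nu^+}$ and $\emb{\nu^-}$ are genuinely the positive and negative parts of $\emb{\mu}$—for which the disjoint-support criterion \cref{res:spatial_and_lattice_disjointness} is precisely the tool required.
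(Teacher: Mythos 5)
Your proposal is correct and follows essentially the same route as the paper's own proof: a Hahn decomposition, inner regularity of the finite measures $\pos{\mu}$ and $\nega{\mu}$ to extract disjoint compact sets, restriction to those sets, and then \cref{res:spatial_and_lattice_disjointness} to identify $\emb{\nu^{+}}$ and $\emb{\nu^{-}}$ as the positive and negative parts of $\emb{\mu}$. The only cosmetic differences are your use of $\varepsilon$ rather than $\varepsilon/2$ in each half of the estimate, and the (correct but unneeded) appeal to mutual singularity where the triangle inequality already suffices.
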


\begin{proof}
	It is clear that $\Emb$ is compatible with the multiplication by bounded Borel measurable functions. 
	In view of \cref{res:embedding_of_M(X)}, it is then only the density statement that requires proof.
	Let $\mu\in\regularrealmeasts$, and let $\mu=\pos{\mu}-\nega{\mu}$ be its decomposition into its positive and negative parts. There exists a partition of $\ts$ into disjoint Borel measurable subsets $\pos{\ts}$ and $\nega{\ts}$ of $\ts$ such that $\pos{\mu}(\nega{\ts})=0$, $\nega{\mu}(\pos{\ts})=0$, $\pos{\mu}(\pos{A})\geq 0$ for every Borel subset $\pos{A}$ of $\pos{\ts}$, and $\nega{\mu}(\nega{A})\geq 0$ for every Borel subset $\nega{A}$ of $\nega{\ts}$; see \cite[Theorem~6.14]{rudin_REAL_AND_COMPLEX_ANALYSIS_THIRD_EDITION:1987}. Let $\varepsilon>0$. Since $\pos{\mu}$, being finite, is inner regular at all Borel subsets of $\ts$ (see \cite[Proposition~7.5]{folland_REAL_ANALYSIS_SECOND_EDITION:1999}), there exists a compact subset $\pos{K}$ of $\pos{\ts}$ such that $0\leq\pos{\mu}(\pos{\ts})-\pos{\mu}(\pos{K})<\varepsilon/2$. Likewise, there exists a compact subset $\nega{K}$ of $\nega{\ts}$ with the property that $0\leq\nega{\mu}(\nega{\ts})-\nega{\mu}(\nega{K})<\varepsilon/2$.
	For $A\in\borelsalg$, we set $\pos{\nu}(A)\coloneqq\pos{\mu}(A\cap\pos{K})$ and $\nega{\nu}(A)\coloneqq\nega{\mu}(A\cap\nega{K})$, thus defining positive measures $\pos{\nu},\nega{\nu}$ on $\borelsalg$. Since $\pos{\mu}$ and $\nega{\mu}$ are \emph{finite} positive regular Borel measures,
	\cite[Section~7.2, Exercise~7]{folland_A_COURSE_IN_ABSTRACT_HARMONIC_ANALYSIS_SECOND_EDITION:2016} shows that $\pos{\nu},\nega{\nu}\in\regularrealmeasts$. Set $\nu\coloneqq\pos{\nu}-\nega
	\nu$. Then $\nu\in\regularrealmeasts$ and $\norm{\mu-\nu}<\varepsilon$. Furthermore, $\supp{\emb{\nu}}\subseteq \pos{K}\cup\nega{K}$ is a compact subset of $\ts$.
	
	Since $\supp\emb{\pos\nu}\subseteq\pos{K}$, $\supp\emb{\nega\nu}\subseteq\nega{K}$, and $\pos{K}\cap\nega{K}=\emptyset$, it follows that $\supp\emb{\pos\nu}\cap\supp\emb{\nega\nu}=\emptyset$. Hence \cref{res:spatial_and_lattice_disjointness} shows that $\emb{\pos\nu}$ and $\emb{\nega\nu}$ are disjoint elements of $\contctsRod$, and this implies that the equality $\emb{\nu}=\emb{\pos{\nu}}-\emb{\nega{\nu}}$ gives the decomposition of $\emb{\nu}$ in $\contctsRod$ into its positive and negative part $\pos{\emb\nu}$ and $\nega{\emb{\nu}}$, respectively. Since $\norm{\emb{\mu}-\emb{\nu}}=\norm{\mu-\nu}<\varepsilon$ by definition, the proof  of the theorem is complete.
\end{proof}

\section{Locally compact groups}\label{sec:locally_compact_groups}

\noindent
In this section, we shall review some material on locally compact groups and on Banach lattice and Banach lattice algebras on such groups. In particular, we shall describe various well-known Banach algebras that are studied within harmonic analysis. For details, see \cite{hewitt_ross_ABSTRACT_HARMONIC_ANALYSIS_VOLUME_I_SECOND_EDITION:1979,bourbaki_INTEGRATION_VOLUME_II_CHAPTERS_7-9_SPRINGER_EDITION:2004,folland_A_COURSE_IN_ABSTRACT_HARMONIC_ANALYSIS_SECOND_EDITION:2016,rudin_FOURIER_ANALYSIS_ON_GROUPS:1962}, and also \cite[Sections~3.3 and~4.5]{dales_BANACH_ALGEBRAS_AND_AUTOMATIC_CONTINUITY:2000}, for example.

\smallskip

A group that is also a locally compact space is a \emph{locally compact group} whenever the group operations are continuous.

Let $\group$ be a locally compact group. As for general locally compact spaces, the Borel $\sigma$-algebra of $\group$ will be denoted by $\borelsalg$. We shall write  $\measgroupR$ for $\meas(\group,\borelsalg,\RR)$ and  $\measgroupC$ for $\meas(\group,\borelsalg,\CC)$. There exists a non-zero, positive regular Borel measure $\Hm$ on $\group$ such that $\Hm(s\cdot A)=\Hm(A)$ for all $s\in\group$ and all Borel subsets $A$ of $\group$. Such a measure is a \emph{\uppars{left}\ Haar measure on $\group$}; it is unique up to a non-zero positive multiplicative constant. We shall write $\EllpgroupR$ for $\Ell^p(\group,\borelsalg,\Hm,\RR)$ and $\EllpgroupC$ for $\Ell^p(\group,\borelsalg,\Hm,\CC)$ .

Let $\group$ be a locally compact group, and let $\Hm$ be a Haar measure on $\group$. Then
\[
\int_\group\! f(as)\dH(s)=\int_\group\! f(s)\dH(s)
\]
for all $f\in\EllonegroupC$ and $a\in\group$. When $\group$ is abelian, the left Haar measure is trivially also right invariant, but this is not generally the case. There exists a continuous group homomorphism $\Delta:\group\to(0,\infty)$ such that
\[
\int_\group\! f(sa)\dH(s)=\Delta(a^{-1})\int_\group\! f(s)\dH(s)
\]
for all $f\in\EllonegroupC$ and $a\in\group$; some authors write $\Delta(a)$ where we use $\Delta(a^{-1})$.  The homomorphism $\Delta$ is the \emph{modular function of $\group$}. It is easy to see that $\Delta(s)=1$ for all $s\in\group$ when $\group$ is compact, so that the left Haar measure is then also right invariant.

\smallskip

Let $\group$ be a locally compact group. We recall from the general theory for locally compact spaces that the Banach lattice $\measgroupR$ is isometrically lattice isomorphic to the Banach lattice $\nd{\contogroupR}$. By combining this isomorphism with the group structure of the underlying locally compact space $\group$, a multiplication on $\measgroupR$ can be introduced such that it becomes a Banach lattice algebra. Take $\mu,\nu\in\measgroupR$. Then the \emph{convolution product $\mu\conv \nu$ of $\mu$ and $\nu$} is defined by
\begin{equation}\label{eq:convolution_definition}
\pairing{\mu\conv \nu,f}\coloneqq \int_\group\! \int_\group\!  f(st) \di{\mu(s)}\di{\nu(t)}\quad
\end{equation}
for all $f\in\contogroupR$. With this multiplication, $\measgroupR$ is a Banach lattice algebra. The unit mass at $e_\group$ is denoted by $\delta_{e_\group}$; it is the identity element of $\measgroupR$.
One can describe $\mu\conv\nu$ at the level of the Borel subsets of $\group$ by
\begin{equation}\label{eq:convolution_for_sets}
(\mu\conv\nu)(A)=\int_\group\! \nu(s^{-1} \cdot A)\di{\mu(s)}=\int_\group\! \mu(A \cdot s^{-1})\di{\nu(s)}
\end{equation}
for $A\in\borelsalg$.

The following basic result is very well known. Since it is essential to the results in Section~\ref{sec:main_theorem}, we nevertheless include the proof.

\begin{proposition}\label{res:support_of_convolution}
	Let $\group$ be a locally compact group, and take $\mu, \nu  \in\measgroupR$ with compact support.  Then $\supp{(\mu\conv \nu)}\subseteq\supp{\mu}\,\cdot\, \supp{\nu}$.
\end{proposition}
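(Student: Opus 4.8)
The plan is to show that $\mu\conv\nu$ vanishes, in the sense of the order dual, on the open complement of $\supp{\mu}\,\cdot\,\supp{\nu}$. Write $K\coloneqq\supp{\mu}$ and $L\coloneqq\supp{\nu}$, both compact by hypothesis. The first step is to record that $K\,\cdot\, L$ is compact, being the image of the compact set $K\times L$ under the continuous multiplication map $\group\times\group\to\group$; in particular $K\,\cdot\, L$ is closed, so its complement $U\coloneqq\group\setminus(K\,\cdot\, L)$ is open.

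Next I would reduce the support statement to testing against functions supported in $U$. By the definition of the support of an element of $\contcgroupRod$, it suffices to show that $\pairing{\mu\conv\nu,f}=0$ for every $f\in\contcgroupR$ with $\supp{f}\subseteq U$; granting this, $\mu\conv\nu$ vanishes on $U$, whence $\supp{(\mu\conv\nu)}\subseteq K\,\cdot\, L$, as desired. So I would fix such an $f$; since $\contcgroupR\subseteq\contogroupR$, formula \cref{eq:convolution_definition} applies and gives $\pairing{\mu\conv\nu,f}=\int_\group\int_\group f(st)\di{\mu(s)}\di{\nu(t)}$.

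The key point is then purely pointwise: if $s\in K$ and $t\in L$, then $st\in K\,\cdot\, L$, which is disjoint from $\supp{f}$, so $f(st)=0$. Thus the continuous (hence bounded Borel measurable) function $(s,t)\mapsto f(st)$ on $\group\times\group$ vanishes identically on $K\times L$. To convert this into the vanishing of the integral, I would pass to the product measure: the integrand is bounded and $\mu,\nu$ are finite, so Fubini's theorem (applied after splitting $\mu$ and $\nu$ into their positive and negative parts) lets me rewrite the iterated integral as $\int_{\group\times\group} f(st)\di{(\mu\times\nu)(s,t)}$. Because $\mu$ is a regular Borel measure with support $K$ we have $\abs{\mu}(\group\setminus K)=0$, and likewise $\abs{\nu}(\group\setminus L)=0$, so $\mu\times\nu$ is concentrated on $K\times L$. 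Restricting the integral to $K\times L$, where the integrand is zero, yields $\pairing{\mu\conv\nu,f}=0$.

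The main obstacle is the measure-theoretic bookkeeping in this last step rather than any deep idea: one must justify that the iterated integral of \cref{eq:convolution_definition} equals the integral against the product measure (Fubini for finite signed measures) and that this product measure assigns no mass to the complement of $K\times L$, which in turn rests on the standard fact that a regular Borel measure gives measure zero to the complement of its support. Once these routine facts are in place, the pointwise vanishing of $f(st)$ on $K\times L$ finishes the argument immediately.
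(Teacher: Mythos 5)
Your proposal is correct and follows essentially the same route as the paper: test $\mu\conv\nu$ against $f\in\contcgroupR$ supported in the open complement of the compact set $\supp{\mu}\,\cdot\,\supp{\nu}$ and conclude from \cref{eq:convolution_definition} that the pairing vanishes. You merely spell out the step the paper calls ``immediate''; note that Fubini is not really needed there, since the iterated integral already localises to $\supp{\mu}\times\supp{\nu}$ once one uses that each regular Borel measure assigns no mass to the complement of its support.
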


\begin{proof}
We may suppose that $\supp{\mu}\,\cdot\, \supp{\nu}\neq\group$. Then $\group\setminus (\supp{\mu}\,\cdot\, \supp{\nu})$ is a non-empty, open subset of $\group$. Take $f\in\contc(\group,\RR; \group\setminus (\supp{\mu}\,\cdot\, \supp{\nu}))$.  Then it is immediate from \cref{eq:convolution_definition} that $\pairing{\mu\conv\nu,f}=0$. Hence $\mu\conv\nu$ vanishes on $\group\setminus (\supp{\mu}\,\cdot\, \supp{\nu})$. The result follows.
\end{proof}

\smallskip

The complex Banach lattice $\measgroupC$ is the complexification of the Banach lattice $\measgroupR$. Since $\measgroupR$ is, in fact, a Banach lattice algebra, $\measgroupC$ is a complex Banach lattice algebra. It is then easily checked that the obvious complex analogues of \cref{eq:convolution_definition,eq:convolution_for_sets} hold.

Take $\mu \in\measgroupC$. Set  $\mu^*(A) = \overline{\mu(A^{-1})}$ for each Borel subset $A$ of $\group$. Then $\mu\mapsto \mu^*$ is an involution on $\measgroupC$.

The following theorem is basic; see \cite[Section~3.3]{dales_BANACH_ALGEBRAS_AND_AUTOMATIC_CONTINUITY:2000}.

\begin{theorem}
	Let $\group$ be a locally compact group. Then $\measgroupR$ is a Dedekind complete, unital Banach lattice algebra, and $\measgroupC$ is a unital, semisimple, complex Banach lattice $^\ast$-algebra. The identity element of both algebras is $\delta_{e_\group }$.
\end{theorem}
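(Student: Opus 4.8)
The bulk of the statement merely collects facts that are already in place. That $\measgroupR$ is a Dedekind complete Banach lattice is \cref{ex:spaces_of_measures_one} together with \cref{ex:spaces_of_measures_two} applied with $\ts=\group$, and the discussion preceding the theorem records that convolution turns it into a Banach lattice algebra: positivity of the product of two positive measures is immediate from the set-level formula \cref{eq:convolution_for_sets}, and submultiplicativity of the norm follows from $\abs{\mu\conv\nu}\leq\abs{\mu}\conv\abs{\nu}$ upon evaluating at $\group$. That $\delta_{e_\group}$ is a two-sided identity I would confirm in one line: for $f\in\contogroupR$, \cref{eq:convolution_definition} gives $\pairing{\delta_{e_\group}\conv\mu,f}=\int_\group f(s)\di{\mu(s)}=\pairing{\mu,f}$, and symmetrically on the other side. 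Passing to $\CC$, the complex algebra $\measgroupC$ is the complexification of $\measgroupR$ (\cref{ex:spaces_of_measures_three}), hence a complex Banach lattice algebra by the general complexification theory of Section~\ref{sec:banach_lattice_algebras}, with the same identity $\delta_{e_\group}$, and $\mu\mapsto\mu^\ast$ is an involution, so that $\measgroupC$ is a complex Banach lattice $^\ast$-algebra. The only substantial point is therefore the semisimplicity of $\measgroupC$.

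For that, the plan is to exhibit a faithful $^\ast$-representation of $\measgroupC$ on a complex Hilbert space. I would take the left regular representation $\lambda\colon\measgroupC\to\bounded(\Ell^2(\group,\CC))$ defined by $\lambda(\mu)f\coloneqq\mu\conv f$, where $\mu\conv f$ is the function $u\mapsto\int_\group f(s^{-1}u)\di{\mu(s)}$ obtained by identifying $f$ with the measure $f\di\Hm$. Minkowski's integral inequality gives $\norm{\mu\conv f}_2\leq\norm{\mu}\,\norm{f}_2$, so $\lambda$ is a contractive, unital algebra homomorphism, and a routine Fubini computation shows $\lambda(\mu^\ast)=\lambda(\mu)^\ast$, i.e.\ $\lambda$ is a $^\ast$-representation. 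For injectivity, if $\lambda(\mu)=0$ then $\mu\conv e_i=0$ for a bounded approximate identity $(e_i)\subseteq\contcgroupC$ of $\EllonegroupC$; since $\pairing{\mu\conv e_i,h}\to\pairing{\mu,h}$ for every $h\in\contogroupC$, this forces $\mu=0$.

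Finally I would deduce $\operatorname{rad}\measgroupC=\{0\}$. Let $\mu\in\operatorname{rad}\measgroupC$. As the radical is a left ideal, $\mu^\ast\conv\mu\in\operatorname{rad}\measgroupC$, and this element is self-adjoint; since every element of the radical is quasi-nilpotent, its spectral radius is $0$. Because $\lambda$ is a unital homomorphism the spectrum can only shrink, so $\lambda(\mu^\ast\conv\mu)$ has spectral radius $0$ in the $C^\ast$-algebra $\bounded(\Ell^2(\group,\CC))$; being self-adjoint there, its norm equals its spectral radius and is hence $0$. The $C^\ast$-identity then yields $\norm{\lambda(\mu)}^2=\norm{\lambda(\mu)^\ast\lambda(\mu)}=\norm{\lambda(\mu^\ast\conv\mu)}=0$, whence $\lambda(\mu)=0$ and so $\mu=0$ by injectivity. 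The main obstacle is exactly this last step: faithfulness of $\lambda$ alone does \emph{not} give semisimplicity (radical Banach algebras embed injectively and continuously into $\bounded(\,\cdot\,)$ via their own left regular representations), so it is essential to exploit the involution together with the $C^\ast$-structure of $\bounded(\Ell^2(\group,\CC))$ in order to convert ``spectral radius $0$'' into ``norm $0$''.
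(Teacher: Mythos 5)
Your proposal is correct, but note that the paper does not actually prove this theorem: it is stated as ``basic'' with a citation to \cite[Section~3.3]{dales_BANACH_ALGEBRAS_AND_AUTOMATIC_CONTINUITY:2000}, so there is no in-paper argument to compare against. Your assembly of the routine parts is sound: Dedekind completeness and the Banach lattice structure come from \cref{ex:spaces_of_measures_one,ex:spaces_of_measures_two} (more precisely from the statement in Section~\ref{sec:locally_compact_spaces} that $\regularrealmeasts$ is a Dedekind complete Banach sublattice, since $\measgroupR$ is identified with $\nd{\contogroupR}$), positivity and submultiplicativity of convolution are immediate, the identity check via \cref{eq:convolution_definition} is fine, and the passage to $\CC$ is exactly the complexification machinery of Section~\ref{sec:banach_lattice_algebras}. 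The only substantial content is semisimplicity of $\measgroupC$, and your argument there is the classical one (essentially what the cited reference does): a faithful unital $^\ast$-representation $\lambda$ on $\Ell^2(\group,\CC)$, quasi-nilpotency of radical elements, and the $C^\ast$-identity applied to the self-adjoint element $\lambda(\mu^\ast\conv\mu)$ to convert spectral radius zero into norm zero. You are right to flag that faithfulness alone proves nothing here and that the involution plus the $C^\ast$-structure of $\bounded(\Ell^2(\group,\CC))$ is the essential ingredient. Two small points worth a sentence each in a full write-up: the weak$^\ast$ convergence $\pairing{\mu\conv e_i,h}\to\pairing{\mu,h}$ uses the right uniform continuity of $h\in\contogroupC$ (so that $\int h(st)e_i(t)\dH(t)\to h(s)$ uniformly in $s$), and the adjoint identity $\lambda(\mu^\ast)=\lambda(\mu)^\ast$ requires the left invariance of $\Hm$ in the substitution, which is where the specific form $\mu^\ast(A)=\overline{\mu(A^{-1})}$ (without a modular-function factor, in contrast to the involution on $\EllonegroupC$) is used.
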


The commutativity of $\measgroupR$ and that of $\measgroupC$ are both equivalent to the group $\group$ being abelian.

\begin{remark}
In the literature on abstract harmonic analysis, the complex Banach lattice algebra $\measgroupC$ is usually denoted by $\meas(\group)$, and it is called the measure algebra of $\group$, without a reference to the complex field. It is then studied as a complex Banach $^\ast$-algebra. We, on the other hand, concentrate on the lattice properties of $\measgroupR$.
\end{remark}

Let $\group$ be a locally compact group with left Haar measure $\Hm$. The subspace of $\measgroupR$ consisting of all elements that are absolutely continuous with respect to $\Hm$ is a Banach sublattice of $\measgroupR$; it is also an algebra ideal and an order ideal of $\measgroupR$. This Banach sublattice is isometrically lattice isomorphic to $\EllonegroupR$ by using the Radon--Nikod{\'y}m theorem:
each $f \in\EllonegroupR$ corresponds to the measure $f{\dH}$ in $\measgroupR$. This identification provides $\EllonegroupR$ with a product; the convolution product of $f$ and $g$ in $\EllonegroupR$ is then given by the formulae
\begin{equation}\label{eq:convolution_of_two_ellone_functions}
(f\conv g)(t)= \int_\group  f(s)g(s^{-1}t)\dH(s)=  \int_\group  f(ts)g(s^{-1})\dH(s)
\end{equation}
for $m_\group$-almost all $t\in\group$.

Similar remarks apply to $\measgroupC$ and $\EllonegroupC$, with the additional feature that the subspace of $\measgroupC$ consisting of all elements that are absolutely continuous with respect to $\Hm$ is now an algebra $^\ast$-ideal. The identification of $\EllonegroupC$ with this subspace then provides $\EllonegroupC$ with an involution, denoted by $^\ast$ again. For $f\in\EllonegroupC$, the involution is given by
\[
f^\ast(s)=\overline{f(s^{-1})}\,\Delta(s^{-1})
\]
for $\Hm$-almost $s\in\group$.

We then have the following result.

\begin{theorem}
	Let $\group$ be a locally compact group. Then $\EllonegroupR$ is a Dedekind complete Banach lattice algebra which is a closed algebra ideal and an order ideal of $\measgroupR$, and $\EllonegroupC$ is a semisimple, complex Banach lattice $^\ast$-algebra which is a closed algebra $^\ast$-ideal of $\measgroupC$.
\end{theorem}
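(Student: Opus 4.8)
The plan is to assemble the statement from facts already recorded in the discussion preceding it, together with a few standard references; only the semisimplicity of $\EllonegroupC$ calls for a genuine argument, and I would deduce it from the semisimplicity of $\measgroupC$ (the preceding theorem) rather than prove it from scratch.

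First I would dispose of $\EllonegroupR$. Under the Radon--Nikod{\'y}m identification it \emph{is} the space $\Ell^1(\group,\borelsalg,\Hm,\RR)$, which is a Dedekind complete Banach lattice by \cref{ex:Ellp_spaces_one} in the case $p=1$. It sits inside $\measgroupR$ as a Banach sublattice and a subalgebra, so to see it is a Banach lattice algebra it suffices to note that the product of two positive elements is positive; this is immediate from the formula \cref{eq:convolution_of_two_ellone_functions}, since $f,g\geq 0$ forces $f\conv g\geq 0$. For the ideal properties: if $\nu\in\measgroupR$ is absolutely continuous with respect to $\Hm$ then so is $\abs{\nu}$, whence $\abs{\mu}\leq\abs{\nu}$ in $\measgroupR$ gives $\abs{\mu}(A)\leq\abs{\nu}(A)=0$ on every $\Hm$-null Borel set $A$, so $\mu$ is absolutely continuous too and $\EllonegroupR$ is an order ideal. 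That it is a \emph{closed} algebra ideal\textemdash that is, $\mu\conv\nu$ and $\nu\conv\mu$ are absolutely continuous whenever $\nu$ is and $\mu$ is arbitrary, and the absolutely continuous measures form a closed subspace\textemdash is classical, for which I would cite the standard harmonic analysis literature.

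Next I would treat $\EllonegroupC$ by complexification. By the procedure of Section~\ref{sec:complex_banach_lattices}, the complexification of the Banach lattice algebra $\EllonegroupR$ is a complex Banach lattice algebra whose norm and modulus coincide with the usual ones on $\Ell^1(\group,\borelsalg,\Hm,\CC)$ (cf.\ \cref{ex:Ellp_spaces_three}). The map $f\mapsto f^\ast$ with $f^\ast(s)=\overline{f(s^{-1})}\,\Delta(s^{-1})$ is a conjugate-linear, product-reversing, isometric map that squares to the identity, i.e.\ an involution, and it is the restriction of the involution of $\measgroupC$; hence $\EllonegroupC$ is a complex Banach lattice $^\ast$-algebra, and being a closed algebra ideal of $\measgroupC$ that is stable under $^\ast$ it is a closed algebra $^\ast$-ideal.

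The one real obstacle is semisimplicity, and the clean route is to use that for a closed two-sided ideal $I$ of a Banach algebra $\alg$ one has $\operatorname{rad} I=I\cap\operatorname{rad}\alg$ (standard Banach algebra theory). Taking $\alg=\measgroupC$ and $I=\EllonegroupC$ and using that $\measgroupC$ is semisimple gives $\operatorname{rad}\EllonegroupC=\EllonegroupC\cap\{0\}=\{0\}$, as required. I would present only this argument; a direct proof is also available (if $0\neq f\in\operatorname{rad}\EllonegroupC$, then $f^\ast\conv f$ lies in the left ideal $\operatorname{rad}\EllonegroupC$ and so is quasi-nilpotent, yet its spectral radius is at least $\norm{\lambda(f)}^2>0$ because the left regular representation $\lambda$ of $\EllonegroupC$ on $\Ell^2(\group,\CC)$ is faithful), but the ideal-theoretic route is shorter and avoids invoking $\Ell^2(\group,\CC)$.
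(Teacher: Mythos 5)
Your proposal is correct. Note, though, that the paper itself offers no proof of this theorem: it is stated as a basic, known result, with the burden carried by the general references given at the head of Section~\ref{sec:locally_compact_groups} (notably \cite[Sections~3.3 and~4.5]{dales_BANACH_ALGEBRAS_AND_AUTOMATIC_CONTINUITY:2000} and Hewitt--Ross), and most of its content is already asserted in the surrounding discussion of the Radon--Nikod{\'y}m identification. So there is no ``paper proof'' to match against; what you have done is supply the assembly that the authors leave implicit, and each step checks out: Dedekind completeness from \cref{ex:Ellp_spaces_one}, positivity of the product from \cref{eq:convolution_of_two_ellone_functions}, the order-ideal property from $\nu\ll\Hm\iff\abs{\nu}\ll\Hm$, and the $^\ast$-stability from the displayed formula for $f^\ast$. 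The one place where your route genuinely differs from the standard literature is semisimplicity: you deduce $\operatorname{rad}\EllonegroupC=\EllonegroupC\cap\operatorname{rad}\measgroupC=\{0\}$ from the identity $\operatorname{rad}I=I\cap\operatorname{rad}\alg$ for a two-sided ideal $I$ and from the semisimplicity of $\measgroupC$ asserted in the preceding theorem. This is logically sound \emph{given} that preceding theorem, and it is the shortest route within the paper; but be aware that in the sources the logical order is usually reversed\textemdash semisimplicity of $\EllonegroupC$ is proved first (essentially by your parenthetical $\Ell^2$ argument via the faithful $^\ast$-representation), and the semisimplicity of $\measgroupC$ is the deeper fact built on top of it. So if one were writing a self-contained account rather than leaning on the paper's earlier citation, your ``direct proof'' in parentheses is the one to keep, and the ideal-theoretic shortcut would be circular.
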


The commutativity of $\EllonegroupR$ and that of $\EllonegroupC$ are both equivalent to the group $\group$ being abelian. Both algebras have a positive contractive approximate identity, and both are unital if and only if $\group$ is discrete. In the latter case,  $\EllonegroupR=\measgroupR$ and $\EllonegroupC=\measgroupC$. It is then customary to write $\ell^1(\group,\RR)$ and $\ell^1(\group,\CC)$ for the coinciding convolution algebras over the respective fields.

We remark that the space $\EllonegroupR$ is not just an order ideal of $\measgroupR$, but that it is, in fact, a so-called \emph{band} of $\measgroupR$. More precisely, it is the band that is generated by $\Hm$. We have not defined what a band is in the present article, and we shall not pursue this matter further.

\begin{remark}
In the literature on abstract harmonic analysis, the complex Banach lattice algebra $\EllonegroupC$ is usually denoted by $\Ell^1(\group)$, and it is called the group algebra of $\group$,  without a reference to the complex field. It is then studied as a complex Banach $^\ast$-algebra, whereas we concentrate on the lattice properties of $\EllonegroupR$.
\end{remark}

\smallskip

Let $\group$ be a locally compact group, and take $p$ with $1\leq p<\infty$. Now take $\mu \in\measgroupF$ and $g\in\EllpgroupF$, and define
\begin{align}
(\mu\convp g)(s) &\coloneqq \int_\group\! g(t^{-1}s)\di{\mu(t)}\label{eq:mu_convp_g}
\intertext{and}
(g\convp \mu)(s)  &\coloneqq \int_\group\!  g(st^{-1}) \Delta_\group ^{1/p}(t^{-1})\di{\mu(t)}\label{eq:g_convp_mu}
\end{align}
for those $s\in\group$ for which these integrals exist; this can be shown to be $\Hm$-almost everywhere the case.

Now take $f\in\EllonegroupF$ and $g\in\EllpgroupF$. Identifying $f$ and $f\dH$,  \cref{eq:mu_convp_g,eq:g_convp_mu} specialise to
\begin{align}
(f\convp g)(s) &\coloneqq \int_\group\! f(t)g(t^{-1}s)(t)\dH(t)\label{eq:f_convp_g}
\intertext{and}
(g\convp f)(s)  &\coloneqq \int_\group\! g(st^{-1}) f(t) \Delta_\group ^{1/p}(t^{-1})\dH(t)\label{eq:g_convp_f}
\end{align}
for $\Hm$-almost all $s\in\group$.

The following theorem is contained in \cite[Section~3.3]{dales_BANACH_ALGEBRAS_AND_AUTOMATIC_CONTINUITY:2000}; see also \cite[(20.19)]{hewitt_ross_ABSTRACT_HARMONIC_ANALYSIS_VOLUME_I_SECOND_EDITION:1979}.

\begin{theorem}\label{res:action_of_measgroup_on_Ellp_basic_facts}
	Let $\group$ be a locally compact group, and take $p$ with $1\leq p<\infty$. Take $\mu \in\measgroupF$ and $g\in\EllpgroupF$. Then
	the functions $\mu \convp g$ and $g \convp  \mu$
	belong to $\EllpgroupF$, and $\norm{\mu\convp g}_p\leq\norm{\mu}\norm{g}_p$ and $\norm{f\convp \mu}_p\leq\norm{\mu}\norm{f}_p$.
\end{theorem}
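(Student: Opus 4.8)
The plan is to derive both estimates from the integral (Minkowski) form of the triangle inequality for the $\Ell^p$-norm, combined with the invariance properties of $\Hm$ recorded in this section. The assertion that $\mu\convp g$ and $g\convp\mu$ lie in $\EllpgroupF$---and in particular that the defining integrals converge for $\Hm$-almost every $s$---will be a by-product of the same computation applied to $\abs{g}$.

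First I would reduce to nonnegative data. Fixing a Borel representative of $g$ and using that inversion and multiplication on $\group$ are continuous, the functions $(t,s)\mapsto g(t^{-1}s)$ and $(t,s)\mapsto g(st^{-1})$ are Borel measurable on $\group\times\group$, which legitimises the measure-theoretic manipulations below. For general $\mu\in\measgroupF$ one has the pointwise domination
\[
\abs{(\mu\convp g)(s)}\leq\int_\group\!\abs{g(t^{-1}s)}\di{\abs{\mu}(t)}=(\abs{\mu}\convp\abs{g})(s),
\]
and similarly $\abs{(g\convp\mu)(s)}\leq(\abs{g}\convp\abs{\mu})(s)$. Since $\norm{\mu}=\abs{\mu}(\group)$, it therefore suffices to bound $\norm{\abs{\mu}\convp\abs{g}}_p$ and $\norm{\abs{g}\convp\abs{\mu}}_p$ by $\norm{\mu}\norm{g}_p$: once these are finite, the integrals defining $\mu\convp g$ and $g\convp\mu$ converge absolutely for $\Hm$-almost every $s$, and the domination transfers the bounds. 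Thus I may assume $\mu\geq 0$ and $g\geq 0$.

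For the left action, Minkowski's integral inequality gives
\[
\norm{\mu\convp g}_p=\Bignorm{\int_\group\! g(t^{-1}\,\cdot\,)\di{\mu(t)}}_p\leq\int_\group\!\norm{g(t^{-1}\,\cdot\,)}_p\di{\mu(t)}.
\]
The left-invariance identity $\int_\group f(as)\dH(s)=\int_\group f(s)\dH(s)$ (applied with $a=t^{-1}$ to $\abs{g}^p$) shows that $\norm{g(t^{-1}\,\cdot\,)}_p=\norm{g}_p$ for every $t$, so the right-hand side equals $\mu(\group)\norm{g}_p=\norm{\mu}\norm{g}_p$. This simultaneously yields $\mu\convp g\in\EllpgroupF$ and the claimed estimate.

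For the right action the inner norm must be computed with the modular function. Applying the defining identity $\int_\group f(sa)\dH(s)=\Delta(a^{-1})\int_\group f(s)\dH(s)$ with $a=t^{-1}$ and $f=\abs{g}^p$ gives $\int_\group\abs{g(st^{-1})}^p\dH(s)=\Delta(t)\norm{g}_p^p$, whence
\[
\Bignorm{g(\,\cdot\, t^{-1})\Delta(t^{-1})^{1/p}}_p^p=\Delta(t^{-1})\Delta(t)\norm{g}_p^p=\norm{g}_p^p,
\]
using that $\Delta$ is a homomorphism, so $\Delta(t^{-1})\Delta(t)=\Delta(e_\group)=1$. Minkowski's integral inequality then bounds $\norm{g\convp\mu}_p$ by $\int_\group\norm{g(\,\cdot\, t^{-1})\Delta(t^{-1})^{1/p}}_p\di{\mu(t)}=\norm{\mu}\norm{g}_p$, again giving membership in $\EllpgroupF$ as a by-product. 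The point requiring the most care is the measure-theoretic bookkeeping: a Haar measure on a general locally compact group need not be $\sigma$-finite, so strictly speaking I would first establish the inequalities for $\mu$ of compact support and $g\in\contctsR$---where all integrals are over relatively compact sets and Tonelli's theorem and Minkowski's inequality apply without reservation---and then pass to general $\mu$ and $g$ by the density of such data together with the estimates already obtained for differences. This reduction, rather than the core estimate, is where I expect the main work to lie.
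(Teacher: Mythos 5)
The paper does not prove this theorem itself; it simply cites \cite[Section~3.3]{dales_BANACH_ALGEBRAS_AND_AUTOMATIC_CONTINUITY:2000} and \cite[(20.19)]{hewitt_ross_ABSTRACT_HARMONIC_ANALYSIS_VOLUME_I_SECOND_EDITION:1979}, and your argument---Minkowski's integral inequality combined with left invariance of $\Hm$ for $\mu\convp g$ and with the modular identity $\Delta(t^{-1})\Delta(t)=1$ for $g\convp\mu$, after reducing to $\abs{\mu}$ and $\abs{g}$---is exactly the standard proof found in those references. It is correct, including your attention to the measurability of $(t,s)\mapsto g(t^{-1}s)$ and to the $\sigma$-finiteness issue needed to invoke Tonelli/Minkowski, and the a.e.\ convergence of the defining integrals does indeed fall out of the estimate for $\abs{\mu}\convp\abs{g}$ as you say.
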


The following is now clear.

\begin{corollary}\label{res:actions_on_ellp}
Let $\group$ be a locally compact group, and take $p$ with $1\leq p<\infty$.

For $\mu\in\measgroupF$, define $\rep_\mu:\EllpgroupF\to\EllpgroupF$ by setting
\[
\rep_\mu(g)\coloneqq\mu\convp g
\]
for $g\in\EllpgroupF$. Then $\rep_\mu\in\regularops{\EllpgroupF}$, and the map $\rep:\mu\mapsto\rep_\mu$ defines a positive Banach algebra homomorphism $\rep:\measgroupF\to\regularops{\EllpgroupF}$.
%

For $f\in\EllonegroupF$, define $\rep_f:\EllpgroupF\to\EllpgroupF$ by setting
\[
\rep_f(g)\coloneqq f\convp g
\]
for $g\in\EllpgroupF$. Then $\rep_f\in\regularops{\EllpgroupF}$, and the map $\rep:f\mapsto\rep_f$ defines a positive Banach algebra homomorphism $\rep:\EllonegroupF\to\regularops{\EllpgroupF}$.
%
\end{corollary}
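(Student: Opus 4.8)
The plan is to treat the real case $\mathbb{F}=\RR$ first, where $\EllpgroupR$ is Dedekind complete for $1\leq p<\infty$ by \cref{ex:Ellp_spaces_one}, so that $\regularops{\EllpgroupR}$ is a Dedekind complete Banach lattice algebra for the regular norm. Within this case I would establish, in order, that each $\rep_\mu$ is a regular operator, that $\rep$ is a positive algebra homomorphism, and that $\rep$ is automatically continuous; the statement for $\EllonegroupR$ and the complex case will then be short deductions.

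First I would record that $\rep_\mu$ is a well-defined regular operator. Its linearity in $g$ is immediate from \eqref{eq:mu_convp_g}, while \cref{res:action_of_measgroup_on_Ellp_basic_facts} provides both that $\rep_\mu(g)=\mu\convp g$ lies in $\EllpgroupR$ and that $\norm{\rep_\mu(g)}_p\leq\norm{\mu}\norm{g}_p$; hence $\rep_\mu\in\bounded(\EllpgroupR)$ with $\norm{\rep_\mu}\leq\norm{\mu}$. When $\mu\geq 0$, formula \eqref{eq:mu_convp_g} shows that $\rep_\mu$ maps $\pos{\EllpgroupR}$ into $\pos{\EllpgroupR}$, so $\rep_\mu$ is positive. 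For arbitrary $\mu$, the decomposition $\mu=\pos{\mu}-\nega{\mu}$ in the vector lattice $\measgroupR$ gives $\rep_\mu=\rep_{\pos{\mu}}-\rep_{\nega{\mu}}$, a difference of positive operators, whence $\rep_\mu\in\regularops{\EllpgroupR}$.

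Next I would verify that $\rep$ is a positive algebra homomorphism. Linearity in $\mu$ is clear from \eqref{eq:mu_convp_g}, and positivity is exactly the implication $\mu\geq 0\Rightarrow\rep_\mu\geq 0$ just noted. Multiplicativity amounts to the identity $\rep_{\mu\conv\nu}=\rep_\mu\circ\rep_\nu$, that is, $(\mu\conv\nu)\convp g=\mu\convp(\nu\convp g)$. Expanding both sides using the definition \eqref{eq:convolution_definition} of the convolution of measures together with \eqref{eq:mu_convp_g}, and interchanging the order of integration by the Fubini--Tonelli theorem, I would obtain
\[
((\mu\conv\nu)\convp g)(s)=\int_\group\!\int_\group\! g(t^{-1}r^{-1}s)\di{\mu(r)}\di{\nu(t)}=(\mu\convp(\nu\convp g))(s)
\]
for $\Hm$-almost all $s\in\group$, which is the desired identity. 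Since $\rep$ is a positive linear map from the Banach lattice $\measgroupR$ into the normed vector lattice $(\regularops{\EllpgroupR},\regularnorm{\,\cdot\,})$, it is automatically continuous; thus $\rep$ is a positive Banach algebra homomorphism.

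The remaining assertions are routine. The statement for $f\in\EllonegroupR$ follows by composing $\rep$ with the isometric Banach lattice algebra embedding $f\mapsto f\dH$ of $\EllonegroupR$ onto the closed ideal of $\measgroupR$ consisting of the measures absolutely continuous with respect to $\Hm$: under this identification \eqref{eq:f_convp_g} is the specialisation of \eqref{eq:mu_convp_g}, so $\rep_f=\rep_{f\dH}$, and positivity and multiplicativity are inherited. Finally, the complex case $\mathbb{F}=\CC$ is obtained by complexification as in \cref{sec:complex_banach_lattices}: the complex operators $\rep_\mu$ are the complex-linear extensions of the real ones, and complexification sends positive operators to positive operators and real algebra homomorphisms to complex algebra homomorphisms. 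The only step requiring genuine care is the appeal to Fubini--Tonelli in the multiplicativity computation, whose hypotheses are met thanks to the integrability already guaranteed by \cref{res:action_of_measgroup_on_Ellp_basic_facts}.
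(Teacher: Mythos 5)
Your proposal is correct and follows essentially the same route as the paper, which simply records this corollary as "now clear" from \cref{res:action_of_measgroup_on_Ellp_basic_facts} together with the standard facts you spell out (regularity via $\rep_\mu=\rep_{\pos{\mu}}-\rep_{\nega{\mu}}$, multiplicativity via associativity of convolution, automatic continuity of positive operators, the embedding $f\mapsto f\dH$, and complexification). The only detail worth noting is that applying \eqref{eq:convolution_definition} to $u\mapsto g(u^{-1}s)$ for $g\in\EllpgroupF$ strictly requires extending that formula beyond $\contogroupR$ (e.g.\ via \eqref{eq:convolution_for_sets}), but this is precisely the routine verification the paper delegates to its references.
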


Similarly, one can define a map $g\mapsto g\convp\mu$, respectively, $g\mapsto g\convp f$. Then the resulting map from $\measgroupF$, respectively, $\EllonegroupF$ into $\regularops{\EllpgroupF}$ has the same properties as its left-sided analogue, save that is an anti-homomorphism.
\smallskip

We shall see later that the two Banach algebra homomorphisms $\rep$ in \cref{res:actions_on_ellp} are both Banach lattice algebra homomorphisms; see \cref{res:action_of_measures_on_Ellp,res:action_of_Ellone_on_Ellp}, below.

\section{Locally compact semigroups}\label{sec:locally_compact_semigroups}

\noindent
In this section, we shall collect some material on Banach lattice algebras on locally compact semigroups. It is for these algebras that we shall benefit from the results in Section~\ref{sec:closed_subspaces_of_locally_compact_spaces} by using them in the proof of our main result, \cref{res:general_result_for_semigroups}, below.

\smallskip

\begin{definition}\label{def:weight_on_semigroup}
Let $S$ be a locally compact semigroup.  A \emph{weight on $S$} is a continuous function $\omega: S \to (0,\infty)$ such that
\[
\omega(st)\leq \omega(s)\omega(t)
\]
for all $s,t\in S$.
\end{definition}

Let $\group$ be a locally compact group, and let $S$ be a closed subspace of $\group$ that is a subsemigroup of $\group$. Suppose that $\omega$ is a weight on $S$, and consider the subset $\meas(S,\omega,\RR)$ of $\meas(S,\RR)$ consisting of all elements $\mu$ of $\meas(S,\RR)$ such that
\[
\int_S\! \omega(t)\di{\abs{\mu}(t)}< \infty.
\]
Then $\meas(S,\omega,\RR)$ is a Dedekind complete vector sublattice of $\meas(S,\RR)$. (It is, in fact, even a band in $\meas(S,\RR)$.) Since $\meas(S,\RR)$ can be embedded as a sublattice of $\od{\contc(S,\RR)}$ by \cref{res:embedding_of_M(X)_as_banach_lattice_with_properly_separated_elements}, this is also the case for the sublattice  $\meas(S,\omega,\RR)$ of $\meas(S,\RR)$. Since, furthermore, $\od{\contc(S,\RR)}$ can be embedded as a sublattice of $\od{\contc(\group,\RR)}$ by \cref{res:embedding_of_order_duals}, we see that $\meas(S,\omega,\RR)$ can be embedded as a sublattice of $\od{\contc(\group,\RR)}$. The embedded copy is easily checked to be a subalgebra of $\measgroupR$, and hence the embedding of $\meas(S,\omega,\RR)$ into $\od{\contc(\group,\RR)}$ provides  $\meas(S,\omega,\RR)$ with a (convolution) product.

We introduce a norm $\norm{\,\cdot\,}_\omega$ on $\meas(S,\omega,\RR)$ by setting
\[
\norm{\mu}_\omega\coloneqq \int_S\! \omega(t)\di{\abs{\mu}(t)}
\]
for $\mu\in\meas(S,\omega,\RR)$. Then $(\meas(S,\omega,\RR), \norm{\,\cdot\,}_\omega)$ is a Banach algebra. The algebra $(\meas(S,\omega,\RR), \norm{\,\cdot\,}_\omega)$ is called a \emph{Beurling algebra}. It is a Dedekind complete Banach lattice algebra.

We then have the following companion result of \cref{res:embedding_of_M(X)_as_banach_lattice_with_properly_separated_elements}.

\begin{theorem}\label{res:embedding_of_beurling_algebras_of_measures_as_banach_lattice_with_properly_separated_elements}
	Let $\group$ be a locally compact group, and let $S$ be a closed subspace of $\group$ that is a subsemigroup of $\group$. Suppose that $\omega$ is a weight on $S$. For each  $\mu\in\meas(S,\omega,\RR)$, set
	\[
	\pairing{\emb{\mu},f}\coloneqq\int_S\! f\di{\mu}
	\]
	for $f\in\contcgroupR$. Then the map $\Emb:\mu\mapsto\emb{\mu}$ defines an injective lattice homomorphism $\Emb:\meas(S,\omega,\RR)\to\od{\contc(\group,\RR)}$. Suppose that $h$ is a bounded Borel measurable function on $S$, and extend $h$ to a Borel measurable function $\widetilde h$ on $\group$ by setting $\widetilde h(t)\coloneqq 0$ for $t\in\group\setminus S$. Then $\emb{h\mu}=\widetilde{h}\emb{\mu}$.
	
	For $\mu\in\meas(S,\omega,\RR)$, set $\norm{\emb{\mu}}\coloneqq \norm{\mu}_\omega$, thus making $\Emb(\meas(S,\omega,\RR))$ into a Dedekind complete Banach lattice. Then the set
	\[
	\lrdesset{\ode\in\Emb(\meas(S,\omega,\RR)) : \supp\ode\text{ is compact and }\supp{\pos{\ode}}\cap\supp{\nega{\ode}}=\emptyset}
	\]
	is a dense subset of the Banach lattice $\Emb(\meas(S,\omega,\RR))$.
\end{theorem}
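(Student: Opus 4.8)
Since the assertions in the first paragraph of the theorem — injectivity of $\Emb$, the lattice‑homomorphism property, and compatibility with multiplication by bounded Borel functions — follow at once from \cref{res:embedding_of_M(X)_as_banach_lattice_with_properly_separated_elements} applied on $S$ composed with the order‑ideal embedding $\resdual$ of \cref{res:embedding_of_order_duals}, the plan is to concentrate on the density statement. Because the norm on $\Emb(\meas(S,\omega,\RR))$ is by definition the transported weighted norm, $\Emb$ is an isometric lattice isomorphism onto its image; it also preserves supports, since the second embedding $\resdual$ preserves them by \cref{res:embedding_of_order_duals} and the first is the standard identification of the support of $\emb\mu$ in $\group$ with the topological support of $\mu$ in $S$. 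Hence it suffices to show that those $\nu\in\meas(S,\omega,\RR)$ whose topological support is compact and for which $\supp{\pos\nu}$ and $\supp{\nega\nu}$ are disjoint form a dense subset of $(\meas(S,\omega,\RR),\norm{\,\cdot\,}_\omega)$.

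Fix $\mu\in\meas(S,\omega,\RR)$ and $\varepsilon>0$, and take a Hahn decomposition $S=\pos S\sqcup\nega S$ into disjoint Borel sets carrying $\pos\mu$ and $\nega\mu$ respectively, exactly as in the proof of \cref{res:embedding_of_M(X)_as_banach_lattice_with_properly_separated_elements}. The heart of the matter is to produce disjoint compact sets $\pos K\subseteq\pos S$ and $\nega K\subseteq\nega S$ with
\[
\int_{\pos S\setminus\pos K}\!\omega\di{\pos\mu}<\varepsilon/2\quad\text{and}\quad\int_{\nega S\setminus\nega K}\!\omega\di{\nega\mu}<\varepsilon/2 .
\]
I would argue for the positive part, the negative one being symmetric. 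The key observation is that $\omega$ is everywhere finite and strictly positive, so the sets $\{\,t:\omega(t)>N\,\}$ and $\{\,t:\omega(t)<1/m\,\}$ both decrease to $\emptyset$ as $N,m\to\infty$; by continuity from above of the finite measure $\omega\di{\pos\mu}$ I can thus choose $N,m$ so that $\int_{\pos S\cap(\{\omega>N\}\cup\{\omega<1/m\})}\omega\di{\pos\mu}<\varepsilon/4$. On the truncated set $B\coloneqq\pos S\cap\{\,1/m\le\omega\le N\,\}$ one has $\pos\mu(B)\le m\int_S\omega\di{\pos\mu}<\infty$, so \cite[Proposition~7.5]{folland_REAL_ANALYSIS_SECOND_EDITION:1999} furnishes a compact $\pos K\subseteq B\subseteq\pos S$ with $\pos\mu(B\setminus\pos K)<\varepsilon/(4N)$; since $\omega\le N$ on $B$ this gives $\int_{B\setminus\pos K}\omega\di{\pos\mu}\le N\pos\mu(B\setminus\pos K)<\varepsilon/4$, and combining the two tail estimates yields the displayed bound.

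With $\pos K$ and $\nega K$ in hand, set $\pos\nu(A)\coloneqq\pos\mu(A\cap\pos K)$, $\nega\nu(A)\coloneqq\nega\mu(A\cap\nega K)$, and $\nu\coloneqq\pos\nu-\nega\nu$. Then $\nu\in\meas(S,\omega,\RR)$, the displayed estimates give $\norm{\mu-\nu}_\omega<\varepsilon$, and $\supp{\pos\nu}\subseteq\pos K$ and $\supp{\nega\nu}\subseteq\nega K$ are disjoint compacta, so $\supp{\emb\nu}\subseteq\pos K\cup\nega K$ is compact. Since $\supp{\emb{\pos\nu}}$ and $\supp{\emb{\nega\nu}}$ are disjoint subsets of $\group$, \cref{res:spatial_and_lattice_disjointness} forces the lattice disjointness of $\emb{\pos\nu}$ and $\emb{\nega\nu}$; hence $\emb\nu=\emb{\pos\nu}-\emb{\nega\nu}$ is the decomposition of $\emb\nu$ into its positive and negative parts, whence $\supp{\pos{(\emb\nu)}}\cap\supp{\nega{(\emb\nu)}}=\emptyset$. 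Thus $\emb\nu$ lies in the prescribed set and $\norm{\emb\mu-\emb\nu}=\norm{\mu-\nu}_\omega<\varepsilon$, proving density.

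The single genuine obstacle is the weighted inner‑regularity step of the second paragraph. In the unweighted \cref{res:embedding_of_M(X)_as_banach_lattice_with_properly_separated_elements} the positive part is a finite measure and inner regularity is invoked directly; here $\pos\mu$ may well be infinite, and one must feed the weight $\omega$ into the approximation, exploiting its continuity and strict positivity to trap the relevant mass on a set where $\omega$ is bounded above and below. (One could instead reduce to \cref{res:embedding_of_M(X)_as_banach_lattice_with_properly_separated_elements} via the multiplication map $\mu\mapsto\omega\mu$, but this merely relocates the same difficulty into the proof that $\omega\mu$ is a regular measure, so the direct estimate above seems preferable.)
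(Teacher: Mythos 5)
Your proof is correct, and its overall architecture (reduce to the corresponding statement in $\meas(S,\omega,\RR)$, take a Hahn decomposition, restrict $\pos{\mu}$ and $\nega{\mu}$ to disjoint compacta inside $\pos{S}$ and $\nega{S}$, and push the separation of supports through the embedding via \cref{res:spatial_and_lattice_disjointness}) matches the paper's. Where you diverge is precisely at the step you flag as the "single genuine obstacle". The paper handles the weighted inner regularity by the route you consider and reject: it observes that, $\omega$ being continuous and strictly positive, $\omega\di{\abs{\mu}}$ is itself a (finite) positive regular Borel measure on $S$\textemdash citing \cite[Section~7.2, Exercise~9]{folland_REAL_ANALYSIS_SECOND_EDITION:1999} for this\textemdash and then applies inner regularity directly to $\omega\di{\pos{\mu}}$ and $\omega\di{\nega{\mu}}$ to produce $\pos{K}$ and $\nega{K}$; your worry that this "merely relocates the same difficulty" is therefore unfounded, since the regularity of a density $\omega\di{\mu}$ with continuous strictly positive $\omega$ is a standard cited fact. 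Your alternative\textemdash truncating to $\{1/m\le\omega\le N\}$ via continuity from above of the finite measure $\omega\di{\pos{\mu}}$ and then invoking inner regularity of $\pos{\mu}$ on a set of finite measure\textemdash is a valid, slightly more self-contained substitute at the cost of some $\varepsilon/4$ bookkeeping. One small correction: since $\meas(S,\omega,\RR)$ is by definition a subset of $\meas(S,\RR)$, the finite real regular Borel measures on $S$, the part $\pos{\mu}$ is automatically a finite measure; the genuine issue is only that $\omega$ may be unbounded, so control in total variation does not give control in $\norm{\,\cdot\,}_\omega$. Consequently your lower truncation $\{\omega<1/m\}$ and the bound $\pos{\mu}(B)\le m\int_S\omega\di{\pos{\mu}}$ are superfluous (though harmless), and only the upper truncation $\{\omega\le N\}$ does real work.
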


\begin{proof}
	In view of the above, all is clear except the density statement. For this, let $\mu\in\meas(S,\omega,\RR)$. Since $\omega$ is strictly positive and continuous, the measure $\omega\di{\abs{\mu}}$ is a positive regular Borel measure on $S$; see \cite[Section~7.2, Exercise~9]{folland_REAL_ANALYSIS_SECOND_EDITION:1999}. An easy modification of the argument in the proof of \cref{res:embedding_of_M(X)_as_banach_lattice_with_properly_separated_elements} then shows that the subset
\[
\desset{\mu\in\meas(S,\omega,\RR): \supp{\mu}\text{ is compact and } \supp\pos{\mu}\cap\supp{\nega
	\mu}=\emptyset}
\]
is a dense subset of $\meas(S,\omega,\RR)$. As in the proof of \cref{res:embedding_of_M(X)_as_banach_lattice_with_properly_separated_elements}, the density statement for the embedded copy $\Emb(\meas(S,\omega,\RR))$ of $\meas(S,\omega,\RR)$ is then immediate.
\end{proof}

Let $\group$ be a locally compact group, and let $S$ be a closed subspace of $\group$ that is a subsemigroup of $\group$. Suppose that $\omega$ is a weight on $S$. It is obvious how to define the complex analogue $\meas(S,\omega,\CC)$ of $\meas(S,\omega,\RR)$. Then $\meas(S,\omega,\CC)$ is the complexification of $\meas(S,\omega,\RR)$; hence $\meas(S,\omega,\CC)$ is a complex Banach lattice algebra.

\smallskip

Let $S$ be a semigroup, supplied with the discrete topology, and let $\omega$ be a weight on $S$.
Instead of considering real-valued measures on $S$ as above, we now consider $\ell^1$-spaces for weighted counting measures, as follows. Let $\ell^1(S,\omega,\RR)$ consist of the functions $f:S\to\RR$ such that
\begin{equation}\label{eq:discrete_beurling_one}
 \sum_{s\in S} \abs{f(s)}\,\omega(s)< \infty.
\end{equation}
We introduce a norm $\norm{\,\cdot\,}_\omega$ on $\ell^1(S,\omega,\RR)$ by setting
\begin{equation}\label{eq:discrete_beurling_two}
\norm{f}_\omega\coloneqq \sum_{s\in S} \abs{f(s)}\,\omega(s)
\end{equation}
for $f\in\ell^1(S,\omega)$. Then $(\ell^1(S,\omega,\RR),\norm{\,\cdot\,}_\omega)$ is a Banach space. For $s\in S$, we let $\delta_s$ denote the characteristic function of the subset $\{s\}$ of $S$. Then there is a unique continuous product on $\ell^1(S,\omega,\RR)$ such that  $\delta_{s_1} \conv\delta_{s_2} = \delta_{s_1s_2}$ for $s_1,s_2\in S$. When supplied with the pointwise ordering, the weighted $\ell^1$-space $(\ell^1(S,\omega,\RR),\norm{\,\cdot\,}_\omega)$ is then a Dedekind complete Banach lattice algebra, which is also called a \emph{Beurling algebra}.

Let $S$ be a semigroup, supplied with the discrete topology, and let $\omega$ be a weight on $S$.
It is obvious how to use \cref{eq:discrete_beurling_one,eq:discrete_beurling_two} to define the complex analogue $\ell^1(S,\omega,\CC)$ of $\ell^1(S,\omega,\RR)$. Then $\ell^1(S,\omega,\CC)$ is the complexification of $\ell^1(S,\omega,\RR)$. Hence $\ell^1(S,\omega,\CC)$ is a complex Banach lattice algebra.

\smallskip

Let $\group$  be a group, supplied with the discrete topology, and let $\omega$ be a weight on $\group$.  Then it is a notorious open question whether the Beurling algebra $\ell^{\,1}(\group ,\omega,\CC)$ is always semisimple.
It is proved in \cite[Theorem~7.13]{dales_lau:2005} that this is the case whenever $\group$ is a maximally almost periodic group and $\omega$ is an arbitrary weight
on $\group$, and also whenever $\group$ is an arbitrary group and $\omega$ is a symmetric weight on $\group$, in the sense that $\omega(s^{-1})=\omega(s)$ for $s\in\group$.

For semigroups, however, it is known that such Beurling algebras need not be semisimple. They can even be radical, as we shall now indicate.

Let $S$ be a semigroup, supplied with the discrete topology, and let $\omega$ be a weight on $S$. For $s\in S$, the element $\delta_s$ of the Beurling algebra $\ell^1(S,\omega,\CC)$ is obviously quasi-nilpotent if and only if
\[
\lim_{n\to\infty}\omega(s^n)^{1/n}=0.
\]
It is shown in \cite[Example 2.3.13(ii)]{dales_BANACH_ALGEBRAS_AND_AUTOMATIC_CONTINUITY:2000} that $\ell^{\,1}(S, \omega,\CC)$ is a radical
Banach algebra whenever $\delta_s$ is quasi-nilpotent for all $s\in S$ and $\omega(st)=\omega(ts)$ for all $s,t\in S$. For example, take
$S=\ZZ^+$ and set $\omega(n) \coloneqq \exp(-n^2)$ for $n\in\ZZ^+$, or take $S$  to be the free semigroup on two generators and set
$\omega(w) = \exp(-\abs{w}^2)$  for a word $w$ in $S$, where $\abs{w}$ is the length of the word $w$. Then in both cases
$\ell^{\,1}(S, \omega,\CC)$ is a radical Banach algebra.

For a study  of the algebras $\ell^{\,1}(S, \omega,\CC)$ when $S$ is a subsemigroup of $\RR$, see \cite{dales_dedania:2009}.  In the case where $S=\ZZ^+$, the algebras
$\ell^{\,1}(\ZZ^+, \omega,\CC)$ are examples of {\it Banach algebras of  power series}; for a study of these algebras, see \cite{bade_dales:1989,dales_BANACH_ALGEBRAS_AND_AUTOMATIC_CONTINUITY:2000}.

\smallskip

We shall now consider continuous analogues of the Beurling algebras $\ell^1(S,\omega,\RR)$ and $\ell^1(S,\omega,\CC)$ above.

Consider the unital additive semigroup $\pos{\RR}\coloneqq[0,\infty)$. Suppose that $\omega$ is a weight on $\pos{\RR}$. Then we define $\Ell^1(\pos{\RR}, \omega,\RR)$  to be the vector space of measurable functions $f$ on $\pos{\RR}$ such that
\begin{equation}\label{eq:half_line_one}
\int_{\pos{\RR}} \!\!\abs{f(t)}\, \omega(t)\di{t} < \infty,
\end{equation}
and we introduce a norm $\norm{\,\cdot\,}_\omega$ on  $\Ell^1(\pos{\RR}, \omega,\RR)$ by setting
\begin{equation}\label{eq:half_line_two}
\norm{f}_\omega \coloneqq\int_{\pos{\RR}}\!\! \abs{f(t)}\, \omega(t)\di{t}
\end{equation}
for  $f\in\Ell^1(\pos{\RR}, \omega,\RR)$. For $f,g\in\Ell^1(\pos{\RR}, \omega,\RR)$, we set
\begin{equation}\label{eq:half_line_three}
(f\conv g)(s)\coloneqq\int_{[0,s]} f(t)g(s-t)\di{t}
\end{equation}
for all $s\in\pos{\RR}$ for which the integral exists, which can be shown to be the case almost everywhere. With this (convolution) product, $(\Ell^1(\pos{\RR}, \omega,\RR),\norm{\,\cdot\,}_\omega)$ is a Dedekind complete Banach lattice algebra that is again an example of a Beurling algebra.

It is obvious how to use \cref{eq:half_line_one,eq:half_line_two,eq:half_line_three} to define the complex analogue $\Ell^1(\pos{\RR},\omega,\CC)$ of $\Ell^1(\pos{\RR},\omega,\RR)$. Then $\Ell^1(\pos{\RR},\omega,\CC)$ is the complexification of $\Ell^1(\pos{\RR},\omega,\RR)$. Hence $\Ell^1(\pos{\RR},\omega,\CC)$ is a complex Banach lattice algebra.

The Beurling algebras $\Ell^1(\pos{\RR}, \omega,\CC)$ are studied in \cite{bade_dales:1981,dales_BANACH_ALGEBRAS_AND_AUTOMATIC_CONTINUITY:2000}, for example. It can
be shown that $\rho_\omega\coloneqq\lim_{t\to \infty} \omega(t)^{1/t}$ always exists, that $\Ell^1(\pos{\RR}, \omega,\CC)$ is semisimple if $\rho_\omega>0$, and that $\Ell^1(\pos{\RR}, \omega,\CC)$ is radical if $\rho_\omega=0$. For example, the weight $\omega :t\mapsto \exp(-t^2)$ gives a radical Beurling algebra on $\pos{\RR}$.

Let $\omega$ be a weight on $\pos{\RR}$. Then it follows from Titchmarsh's convolution theorem \cite[Theorem 4.7.22]{dales_BANACH_ALGEBRAS_AND_AUTOMATIC_CONTINUITY:2000} that the Beurling algebras $\Ell^1(\pos{\RR}, \omega,\RR)$ and $\Ell^1(\pos{\RR}, \omega,\CC)$ are integral domains.

\section{Main theorem}\label{sec:main_theorem}

\noindent
In this section, we shall establish our main result, \cref{res:general_result_for_semigroups}, below, in the context of non-empty, closed semigroups in locally compact groups, as well as a related, easier, result in the context of discrete semigroups.

\smallskip

We start with the following preparatory result.

\begin{lemma}\label{res:separation_lemma}
	Let $\group$ be a locally compact group, and let $K_1$ and $K_2$ be non-empty, disjoint, compact subsets of $\group$. Then there exists an open neighbourhood $U$ of $e_\group$ such that $K_1U$ and $K_2U$ are disjoint subsets of $\group$.
\end{lemma}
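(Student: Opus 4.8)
The plan is to reduce the desired disjointness of the ``thickened'' sets $K_1U$ and $K_2U$ to a single condition on the compact set $K_1^{-1}K_2$ and the set $UU^{-1}$, and then to obtain a suitable $U$ from the continuity of the group operations. First I observe the elementary equivalence
\[
K_1U\cap K_2U=\emptyset \iff K_1^{-1}K_2\cap UU^{-1}=\emptyset,
\]
where $K_1^{-1}K_2\coloneqq\desset{k_1^{-1}k_2:k_1\in K_1,\,k_2\in K_2}$ and $UU^{-1}\coloneqq\desset{uv^{-1}:u,v\in U}$. Indeed, a common point of $K_1U$ and $K_2U$ is precisely an equality $k_1u=k_2v$ with $k_i\in K_i$ and $u,v\in U$, which rearranges to $k_1^{-1}k_2=uv^{-1}$; so the intersection on the left is non-empty if and only if the one on the right is.

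Next I would exploit that $K_1$ and $K_2$ are disjoint. The map $(x,y)\mapsto x^{-1}y$ is continuous, so $K_1^{-1}K_2$ is the continuous image of the compact set $K_1\times K_2$ and is therefore compact; since $\group$ is Hausdorff, it is closed. Moreover $e_\group\in K_1^{-1}K_2$ would force $k_1^{-1}k_2=e_\group$, that is, $k_1=k_2$ for some $k_1\in K_1$ and $k_2\in K_2$, contradicting $K_1\cap K_2=\emptyset$; hence $e_\group\notin K_1^{-1}K_2$. Thus $W\coloneqq\group\setminus(K_1^{-1}K_2)$ is an open neighbourhood of $e_\group$. Because the map $(u,v)\mapsto uv^{-1}$ is continuous and sends $(e_\group,e_\group)$ to $e_\group\in W$, there are open neighbourhoods $U_1,U_2$ of $e_\group$ with $U_1U_2^{-1}\subseteq W$; taking $U\coloneqq U_1\cap U_2$ gives $UU^{-1}\subseteq U_1U_2^{-1}\subseteq W$, so that $K_1^{-1}K_2\cap UU^{-1}=\emptyset$. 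By the equivalence above, $K_1U\cap K_2U=\emptyset$, as required.

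There is no serious obstacle here: the whole content is the reduction in the first paragraph, after which the argument is a routine use of continuity of $(u,v)\mapsto uv^{-1}$ at the identity together with the closedness of $K_1^{-1}K_2$. The only points that genuinely use the hypotheses are that $\group$ is Hausdorff (so that the compact set $K_1^{-1}K_2$ is closed and its complement is open) and that $K_1\cap K_2=\emptyset$ (so that $e_\group$ lies in that complement). As an alternative, one could argue by contradiction: were no such $U$ to exist, then along the net of neighbourhoods of $e_\group$, directed by reverse inclusion, one would obtain points $k_1^U\in K_1$ and $k_2^U\in K_2$ with $k_1^U u^U=k_2^U v^U$ for $u^U,v^U$ shrinking to $e_\group$; passing to convergent subnets of $(k_1^U)$ and $(k_2^U)$ by compactness and using continuity of multiplication would yield a common point of $K_1$ and $K_2$, again a contradiction.
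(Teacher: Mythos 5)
Your proof is correct, and it takes a genuinely different route from the paper's. You reduce the problem to a statement about a single compact set: $K_1U\cap K_2U=\emptyset$ if and only if $UU^{-1}$ misses the compact (hence closed) set $K_1^{-1}K_2$, which does not contain $e_\group$ because $K_1$ and $K_2$ are disjoint; a neighbourhood $U$ with $UU^{-1}$ inside the complement then exists by continuity of $(u,v)\mapsto uv^{-1}$ at $(e_\group,e_\group)$. The paper instead first separates $K_1$ and $K_2$ by open sets $W_1,W_2$ with disjoint closures (using local compactness) and then shrinks: for $i=1,2$ it finds a neighbourhood $U_i$ of $e_\group$ with $K_iU_i\subseteq W_i$, and takes $U=U_1\cap U_2$. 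Your argument is the more classical "uniformity" proof and has the small advantage of making explicit that local compactness is not needed at all\textemdash only that $\group$ is a Hausdorff topological group (so that the compact set $K_1^{-1}K_2$ is closed). The paper's version keeps the two compacta and their enveloping open sets in view, which matches how the lemma is then used, but it quietly relies on a "tube lemma"-type step (the existence of $U_i$ with $K_iU_i\subseteq W_i$) that your single-set reduction packages into one application of continuity at the identity. Your closing remark about a proof by contradiction via nets is also sound, though it is essentially a compactness rephrasing of the same idea.
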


\begin{proof}
Since $\group$ is locally compact, there exists open neighbourhoods $W_1$ and $W_2$ of $K_1$ and $K_2$, respectively, such that $\overline {W_1}$ and $\overline{W_2}$ are disjoint. It is easy to see that, for $i=1,2$, there is an open neighbourhood $U_i$ of $e_\group$ with $K_iU_i\subseteq W_i$. Set $U\coloneqq U_1\cap U_2$. Then $U$ is an open neighbourhood of $e_\group$ and we also see that $K_1U\cap K_2 U\subseteq K_1U_1\cap K_2U_2\subseteq W_1\cap W_2=\emptyset$.
\end{proof}

\smallskip

For the ease of formulation, we introduce the following terminology. 

\begin{definition}
Let $\ode\in\contctsRod$. Then \emph{the support of $\ode$ is separated}, or \emph{$\ode$ has separated support},  if the supports of $\pos{\ode}$ and $\nega{\ode}$ are disjoint subsets of $\ts$.	
\end{definition}

We now come to our main result, \cref{res:general_result_for_semigroups}. It employs a notation $\bil$ for a bilinear map $\bil$ that suggests convolution, without requiring that this actually be the case. The reason is that we also want to cover situations where, for example, $\mu$ is a positive measure on the Borel $\sigma$-algebra of a locally compact group $\group$ and a function $f_1$ acts on a function $f_2$ via the formula
\begin{equation}\label{eq:weighted_convolution}
f_1\bil f_2(s)=\int_\group \! f_1(t)f_2(t^{-1}s)\di{\mu(t)}
\end{equation}
for $\mu$-almost all $s\in\group$. Unless the measure $\mu$ is a left Haar measure on $\group$, this is not an actual convolution, but obviously it still satisfies the relation
\[
\desset{s\in\group: (f_1\bil f_2)(s)\neq 0}\subseteq\desset{s\in\group: f_1(s)\neq 0}\,\cdot\,\desset{s\in\group: f_2(s)\neq 0},
\]
which is akin to the inclusion relation in the crucial first clause of the hypotheses in \cref{res:general_result_for_semigroups}. Such bilinear maps occur in \cite{oztop_samei:2017,oztop_samei:UNPUBLISHED}, for example, and \cref{res:general_result_for_semigroups} is likely to be applicable in such contexts.

\begin{theorem}\label{res:general_result_for_semigroups}
	
	Let $\group$ be a locally compact group, and let $S$ be a non-empty, closed subspace of $\group$ that is a subsemigroup of $\group$. Let $X$, $Y$\!, and $Z$ be vector sublattices of $\od{\contc(S,\RR)}$ that are Banach lattices, and where $Z$ is Dedekind complete.
	
	 Suppose that $\bil:X\times Y\to Z$ is a bilinear map such that $x\bil y\in\pos{Z}$ whenever  $x\in\pos{X}$ and $y\in\pos{Y}$. Define the positive linear map $\rep:X\to\regular(Y,Z)$ by $\rep_x(y)\coloneqq x\bil y$ for $x\in X$ and $y\in Y$.
	
	Suppose that the following conditions are satisfied:
	\begin{enumerate}
		\item $\supp{(x\bil y)}\subseteq\supp{x}\cdot\supp{y}$ for all $x\in \pos{X}$ and $y\in \pos{Y}$ with compact support;
		\item the elements of $X$ with compact, separated support are dense in $X$;
		\item the elements of $\pos{Y}$ with compact support are dense in $\pos{Y}$;
		\item $\ind_A y$ is an element of $Y$ again, whenever $y\in\pos{Y}$ has compact support and $A$ is a Borel subset of  $\supp{y}$.
	\end{enumerate}
Then $\rep$ is a lattice homomorphism.
\end{theorem}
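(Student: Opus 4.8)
The plan is to establish the equivalent identity $\abs{\rep_x}=\rep_{\abs{x}}$ in $\regular(Y,Z)$ for all $x\in X$, which, for the positive linear map $\rep$, is one of the standard characterisations of a lattice homomorphism. First I would observe that both sides depend continuously on $x$: the map $\rep$ is positive and hence a continuous operator between Banach lattices, $T\mapsto\abs{T}$ is continuous for the lattice norm $\regularnorm{\,\cdot\,}$ on $\regular(Y,Z)$, and $x\mapsto\abs{x}$ is continuous on $X$. By condition~(2) it then suffices to treat $x$ with compact, separated support. For such $x$ I would write $x=\pos{x}-\nega{x}$ with $\pos{x},\nega{x}\in\pos{X}$ having disjoint compact supports $K^+\coloneqq\supp{\pos{x}}$ and $K^-\coloneqq\supp{\nega{x}}$, so that $\abs{x}=\pos{x}+\nega{x}$ and $\rep_x=\rep_{\pos{x}}-\rep_{\nega{x}}$. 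Using that $\abs{a-b}=a+b$ for disjoint positive $a,b$, the identity $\abs{\rep_x}=\rep_{\abs{x}}$ reduces to the disjointness of the positive operators $\rep_{\pos{x}}$ and $\rep_{\nega{x}}$, i.e.\ to $\rep_{\pos{x}}\wedge\rep_{\nega{x}}=0$.

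The operator $T_0\coloneqq\rep_{\pos{x}}\wedge\rep_{\nega{x}}$ exists by \cref{res:riesz_kantorovich} and is positive, hence continuous, so by condition~(3) and continuity it is enough to prove $T_0 y=0$ for $y\in\pos{Y}$ with compact support $L\coloneqq\supp{y}$. The mechanism I would use is the additivity of $T_0$: once $y$ is written as a finite sum $y=\sum_{i=1}^n y_i$ of elements of $\pos{Y}$, one has $T_0 y=\sum_i T_0 y_i$, and for each summand the Riesz--Kantorovich formula \cref{eq:RK3}, applied to the two trivial decompositions, yields
\[
0\leq T_0 y_i\leq(\pos{x}\bil y_i)\wedge(\nega{x}\bil y_i).
\]
Thus it suffices to split $y$ so that, for every $i$, the elements $\pos{x}\bil y_i$ and $\nega{x}\bil y_i$ are disjoint. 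By condition~(1) their supports are contained in $K^+\cdot\supp{y_i}$ and $K^-\cdot\supp{y_i}$, so by \cref{res:spatial_and_lattice_disjointness} they are disjoint in $\od{\contc(S,\RR)}$, and hence in the sublattice $Z$, as soon as $\supp{y_i}$ lies in a set $O$ with $K^+\cdot O$ and $K^-\cdot O$ disjoint.

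To produce such a splitting I would use \cref{res:separation_lemma}. For each $s\in L$ the compact sets $K^+ s$ and $K^- s$ are disjoint, since right translation by $s$ is a bijection of $\group$; \cref{res:separation_lemma} then provides an open neighbourhood $U_s$ of $e_\group$ with $K^+(sU_s)\cap K^-(sU_s)=\emptyset$, and after shrinking $U_s$ I may assume $s\overline{U_s}$ is compact and $K^+\,\overline{sU_s}\cap K^-\,\overline{sU_s}=\emptyset$. The sets $sU_s$ cover the compact set $L$, so a finite subcollection $O_i\coloneqq s_iU_{s_i}$ already covers $L$; disjointifying this cover yields Borel sets $A_i\subseteq O_i\cap L$ that partition $L$. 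By condition~(4) each $y_i\coloneqq\ind_{A_i}y$ lies in $\pos{Y}$, one has $\sum_i y_i=y$, and $\supp{y_i}\subseteq\overline{A_i}\subseteq\overline{O_i}$ is compact with $K^+\,\overline{O_i}\cap K^-\,\overline{O_i}=\emptyset$. The conclusion of the previous paragraph then gives $T_0 y_i=0$ for every $i$, whence $T_0 y=0$, and the theorem follows.

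The main obstacle is this last paragraph: one must carry out the decomposition of $y$ so that the pieces simultaneously keep compact support, are confined to the separating neighbourhoods (passing to closures is needed so that condition~(1) is applicable to $y_i$), and arise as genuine elements of $Y$ through the indicator-multiplication of Section~\ref{sec:locally_compact_spaces}. In particular I would have to verify the support localisation $\supp{(\ind_{A_i}y)}\subseteq\overline{A_i}$ for that product, which follows readily from the definition of $\ind_{A_i}y$ but is the kind of point that is easy to overlook. The remaining ingredients---continuity of the lattice operations, additivity of $T_0$, and the passage from disjoint supports to disjointness in $Z$---are routine given the results already available.
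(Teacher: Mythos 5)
Your proposal is correct, and its core mechanism coincides with the paper's: reduce by continuity and density to $x$ with compact, separated support and $y\in\pos{Y}$ with compact support, chop $y$ into finitely many pieces whose supports are small enough that $\supp{\pos{x}}$ and $\supp{\nega{x}}$ remain separated after multiplication, and then invoke \cref{res:spatial_and_lattice_disjointness} to convert disjointness of supports into lattice disjointness. Two differences are worth recording. First, where you reformulate the goal as $\rep_{\pos{x}}\wedge\rep_{\nega{x}}=0$ and bound $T_0y_i$ by $(\pos{x}\bil y_i)\wedge(\nega{x}\bil y_i)$ via the trivial decompositions in \cref{eq:RK3}, the paper argues directly with the modulus: $\abs{\rep_x}\leq\rep_{\abs{x}}$ by positivity, and $\abs{\rep_x}(y)\geq\abs{\rep_x(y)}=\pos{x}\bil y+\nega{x}\bil y=\rep_{\abs{x}}(y)$ by additivity of the modulus on disjoint elements; these are essentially equivalent, and your covering by the sets $s_iU_{s_i}$ is a minor variant of the paper's single neighbourhood $U$ with $(\supp{\pos{x}})U\cap(\supp{\nega{x}})U=\emptyset$ and a cover of $\supp{y}$ by right translates of a smaller $V$. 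Second, and more substantially, the paper proves the theorem only for $S=\group$ by this argument and then handles a general closed subsemigroup $S$ by transporting $X$, $Y$, and $Z$ into $\od{\contc(\group,\RR)}$ via the support-preserving embedding of \cref{res:embedding_of_order_duals}; you instead run the argument directly in $\od{\contc(S,\RR)}$, doing all the topology (translation, separation via \cref{res:separation_lemma}, the open cover of $\supp{y}$) in the ambient group $\group$ while keeping the lattice- and measure-theoretic steps (condition~(1), the indicator products, \cref{res:spatial_and_lattice_disjointness} applied with $\ts=S$) intrinsic to $S$. This works, and it buys you a proof of \cref{res:general_result_for_semigroups} that does not rely on \cref{res:embedding_of_order_duals} at all; what it costs is only the bookkeeping you already flag, namely checking that the Borel pieces $A_i$ of $\supp{y}$ are Borel in $S$, that $\supp{(\ind_{A_i}y)}\subseteq\overline{A_i}$, and that disjointness in $\od{\contc(S,\RR)}$ restricts to disjointness in the sublattice $Z$, all of which are routine.
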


For the sake of clarity, we recall that semigroups are not supposed to be unital.

\begin{proof}

We start with the case where $S=\group$. We are to prove that $\abs{\rep_x}=\rep_{\abs{x}}$ for all $x\in X$.

Recalling that positive linear maps between Banach lattices are continuous, that $\regular(Y,Z)$ is a Banach lattice in the regular norm, and that the modulus is continuous on Banach lattices, we see that the maps $x\mapsto \abs{\rep_x}$ and $x\mapsto\rep_{\abs{x}}$ are both continuous maps from $X$ into $\regular(Y,Z)$. By density, it is thus sufficient to prove that $\abs{\rep_x}=\rep_{\abs{x}}$ for all elements $x$ of $X$ with separated, compact support.

For this, we need to show that $\abs{\rep_x}(y)=\rep_{\abs{x}}(y)$ for all $y\in Y$. It is sufficient to establish this for all $y\in\pos{Y}$. By the continuity of the regular operators $\abs{\rep_x}$ and $\rep_{\abs{x}}$ on the Banach lattice $Y$, it is, by density, sufficient to prove that $\abs{\rep_x}(y)=\rep_{\abs{x}}(y)$ for all elements of $\pos{Y}$ with compact support.

All in all, we see that it is sufficient to demonstrate that $\abs{\rep_x}(y)= \rep_{\abs{x}}(y)$, whenever $x$ is an element of $X$ with separated, compact support and $y$ is an element of  $\pos{Y}$ with compact support.

In order to do so, we fix an element $x$ of $X$ with compact, separated support, and we let $x=\pos{x}-\nega{x}$ be the decomposition of $x$ into its disjoint positive and negative parts. The supports of $\pos{x}$ and $\nega{x}$ are disjoint, compact subsets of $X$. Using \cref{res:separation_lemma}, we can then choose and fix a relatively compact open neighbourhood $U$ of the $e_\group$ such that $(\supp{\pos{x}})U\cap(\supp{\nega{x}})U=\emptyset$.

We shall now first consider the special case in which the support of $y\in\pos{Y}$ is not only compact, but where it is also `sufficiently small'. To be precise, suppose that $y$ is an element of $\pos{Y}$ with compact support such that $\supp{y}\subseteq Us$ for some $s\in \group $. We shall show that then $\abs{\rep_x}(y)=\rep_{\abs{x}}(y)$.

First, since $\rep$ is positive, it is automatic that $\abs{\rep_x}\leq \rep_{\abs{x}}$, so that we have $\abs{\rep_x}(y)\leq \rep_{\abs{x}}(y)$.

Second, for the reverse inequality, we notice that certainly $\abs{\rep_x}(y)\geq \pm\rep_x(y)$, so that $\abs{\rep_x}(y)\geq\abs{\rep_x(y)}$. Since
\[\supp{(\pos{x}\bil y)}\subseteq\supp{\pos{x}}\cdot\,\supp{y}\subseteq\supp{\pos{x}}\cdot\, Us
\]
and
\[
\supp{(\nega{x}\bil y)}\subseteq\supp{\nega{x}}\cdot\,\supp{y}\subseteq\supp{\nega{x}}\cdot\, Us,
\]
the supports of $\pos{x}\bil y$ and $\nega{x}\bil y$ are still disjoint subsets of $\ts$. \cref{res:spatial_and_lattice_disjointness}, therefore, implies that
\begin{align*}
\abs{\rep_x(y)}&=\abs{\pos{x}\bil y-\nega{x}\bil y}=\abs{\pos{x}\bil y}+\abs{-\nega{x}\bil y}\\
&=\pos{x}\bil y + \nega{x}\bil y=\abs{x}\bil y=\rep_{\abs{x}}(y).
\end{align*}
We conclude that $\abs{\rep_x}(y)\geq\rep_{\abs{x}}(y)$.
We have established that $\abs{\rep_x}(y)=\rep_{\abs{x}}(y)$ in the special case where $y\in\pos{Y}$ is such that $\supp{y}$ is contained in $Us$ for some $g\in\group$.

Now suppose that $y$ is an arbitrary element of $\pos{Y}$ with compact support. Choose an open neighbourhood $V$ of $e_\group$ such that $\overline V\subseteq U$. Then $\supp{y}$ is contained in a union of finitely many right translates of $V$. Since $\ind_A y$ is still in $Y$ for all Borel subsets $A$ of $\supp{y}$, it is then easy to see that $y$ is a finite sum of elements of $\pos{Y}$, each of which is supported in a right translate of $\overline{V}$, hence in a right translate of $U$. By linearity, it follows from the result as established for the special case that $\abs{\rep_x}(y)=\rep_{\abs{x}}(y)$.

We have now established the theorem in the case where $S=\group$.

Next, we turn to the case of a general closed subspace $S$ of $\group$ that is a subsemigroup of $\group$.
The problem with the above proof in this case is that translates of open subsets need not be open again. Even if $S$ is unital, the proof of \cref{res:separation_lemma} breaks down, as does the argument in the final paragraph for the group case.

In order to circumvent this, we use \cref{res:embedding_of_order_duals} to embed $\od{\contc(S,\RR)}$ as a vector sublattice of $\od{\contc(\group,\RR)}$. By restriction, this global embedding yields embeddings of $X$, $Y$\!, and $Z$ as vector sublattices  $X^{\#}$\!, $Y^{\#}$\!, and $Z^{\#}$\! of $\od{\contc(\group,\RR)}$. By transporting the norms, these vector sublattices $X^{\#}$\!, $Y^{\#}$\!, and $Z^{\#}$\! then become Banach lattices. The bilinear map $\bil:X\times Y\to Z$ yields a bilinear map $\bil^{\#}:X^{\#}\times Y^{\#}\to Z^{\#}$. Since \cref{res:embedding_of_order_duals} also states that supports are preserved under the embedding of $\od{\contc(S,\RR)}$ into $\od{\contc(\group,\RR)}$, it is then immediate that the hypotheses in the theorem are satisfied for the sublattices $X^{\#}$\!, $Y^{\#}$\!, and $Z^{\#}$\! of $\od{\contc(\group,\RR)}$ and the bilinear map $\bil^{\#}$. We can now apply the result for the group case to these data. By transport of structure in the reverse direction, the result for the semigroup case then follows.
\end{proof}

During the above proof, it was indicated why the argument for the case of groups cannot in general be directly applied to the case of arbitrary semigroups. A closer inspection, however, shows that the argument for the case of groups \emph{is} valid in the case of a semigroup that is discrete and cancellative.  We recall that a semigroup $S$ is \emph{cancellative} if the maps $s\mapsto st$ and $s\mapsto ts$  from $S$ to $S$ are both injective for each $t \in S$. We shall now indicate the ingredients for the proof in this case.

Suppose that $S$ is a cancellative semigroup, supplied with the discrete topology. Then $\contc(S,\RR)$ consists of the real-valued functions with finite support, and $\od{\contc(S,\RR)}$ can be identified as a vector lattice with the real-valued functions on $S$. Consequently, $\ode$ has separated support for all $\ode\in\od{\contc(S,\RR)}$.
Suppose that $x\in\od{\contc(S,\RR)}$, and let $x=\pos{x}-\nega{x}$ be the decomposition of $x$ into its disjoint positive and negative parts. Then $\supp{\pos{x}}\,\cdot\,s$ and $\supp{\nega{x}}\,\cdot\,s$ are disjoint subsets of $S$ for all $s\in S$, due to the fact that $S$ is cancellative.
Finally, if $y\in\od{\contc(S,\RR)}$ has compact support, then $y$ is a finite sum of elements of $\od{\contc(S,\RR)}$, each of which is supported in a subset $\{s\}$ of $S$ for some $s\in S$.

After these preliminary remarks, the reader will have no difficulty verifying the following result along the lines of the proof of \cref{res:general_result_for_semigroups}. 

\begin{theorem}\label{res:general_result_for_discrete_semigroups}
	
	Let $S$ be a cancellative semigroup, supplied with the discrete topology. Let $X$, $Y$\!, and $Z$ be vector sublattices of $\od{\contc(S,\RR)}$ that are Banach lattices, and where $Z$ is Dedekind complete.
	
	Suppose that $\bil:X\times Y\to Z$ is a bilinear map such that $x\bil y\in\pos{Z}$ whenever  $x\in\pos{X}$ and $y\in\pos{Y}$. Define the positive linear map $\rep:X\to\regular(Y,Z)$ by $\rep_x(y)\coloneqq x\bil y$ for $x\in X$ and $y\in Y$.
	
	Suppose that the following conditions are satisfied:
	\begin{enumerate}
		\item $\supp{(x\bil y)}\subseteq\supp{x}\cdot\supp{y}$ for arbitrary $x\in \pos{X}$ and for all $y\in \pos{Y}$ with finite support;
		\item the elements of $\pos{Y}$ with finite support are dense in $\pos{Y}$;
		\item $\ind_{\{s\}} y$ is an element of $Y$ again, whenever $y\in\pos{Y}$ has finite support and $s\in S$.
	\end{enumerate}
	Then $\rep$ is a lattice homomorphism.
\end{theorem}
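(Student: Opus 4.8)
The plan is to follow the proof of \cref{res:general_result_for_semigroups} for the group case, substituting for each topological ingredient the discrete analogue supplied by the preliminary remarks. As there, the goal is to show that $\abs{\rep_x}=\rep_{\abs{x}}$ for every $x\in X$. Since both $\abs{\rep_x}$ and $\rep_{\abs{x}}$ are linear operators on $Y$ and $Y=\pos{Y}-\pos{Y}$, it suffices to verify $\abs{\rep_x}(y)=\rep_{\abs{x}}(y)$ for $y\in\pos{Y}$; and since these two operators are continuous on $Y$, condition~(2) lets me reduce to $y\in\pos{Y}$ with finite support. Note that no density reduction on $x$ is needed here: because $\od{\contc(S,\RR)}$ is identified with the real-valued functions on $S$, every $x$ already has separated support, so I may work with an arbitrary fixed $x\in X$. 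The inequality $\abs{\rep_x}\leq\rep_{\abs{x}}$ is immediate from positivity of $\rep$ (write $\rep_x=\rep_{\pos{x}}-\rep_{\nega{x}}$ with both summands positive), giving $\abs{\rep_x}(y)\leq\rep_{\abs{x}}(y)$.

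For the reverse inequality I would first treat the case $\supp{y}\subseteq\{s\}$ for a single $s\in S$. Here condition~(1) gives $\supp{(\pos{x}\bil y)}\subseteq\supp{\pos{x}}\cdot s$ and $\supp{(\nega{x}\bil y)}\subseteq\supp{\nega{x}}\cdot s$, and cancellativity of $S$ forces these two right-translates to be disjoint, since $\supp{\pos{x}}$ and $\supp{\nega{x}}$ are (if $as=bs$ with $a\in\supp{\pos{x}}$, $b\in\supp{\nega{x}}$, then $a=b$ by right cancellativity). \cref{res:spatial_and_lattice_disjointness} then shows that $\pos{x}\bil y$ and $\nega{x}\bil y$ are disjoint in $Z$, so that
\[
\abs{\rep_x(y)}=\abs{\pos{x}\bil y-\nega{x}\bil y}=\pos{x}\bil y+\nega{x}\bil y=\abs{x}\bil y=\rep_{\abs{x}}(y).
\]
Combining this with $\abs{\rep_x}(y)\geq\abs{\rep_x(y)}$ (which holds because $\abs{\rep_x}\geq\pm\rep_x$ and $y\geq 0$) yields $\abs{\rep_x}(y)\geq\rep_{\abs{x}}(y)$, and hence equality in this special case.

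To pass to an arbitrary $y\in\pos{Y}$ with finite support, I would use condition~(3) to write $y=\sum_{s\in\supp{y}}\ind_{\{s\}}y$ as a finite sum of elements of $\pos{Y}$, each supported on a singleton (each summand lies in $\pos{Y}$ because $\ind_{\{s\}}y\in Y$ by condition~(3) and $\ind_{\{s\}}y\geq 0$). Linearity of both $\abs{\rep_x}$ and $\rep_{\abs{x}}$ then reduces the desired equality to the singleton case already handled, completing the argument.

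The only genuinely new point compared with the group case is the disjointness of $\supp{\pos{x}}\cdot s$ and $\supp{\nega{x}}\cdot s$: in the group proof this was engineered by \cref{res:separation_lemma} through a common open neighbourhood of $e_\group$, whereas here it is handed to me directly by cancellativity, and the clumsier covering-by-translates step is replaced by the trivial decomposition into singletons via condition~(3). I therefore do not expect a serious obstacle; the only care required is in confirming that condition~(1) indeed applies with the arbitrary $\pos{x},\nega{x}\in\pos{X}$ and the finitely-supported $\ind_{\{s\}}y$, and in the bookkeeping of the two reduction steps.
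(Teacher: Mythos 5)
Your proposal is correct and is essentially the proof the paper intends: the paper only sketches this result, listing exactly the three discrete ingredients you use (automatic separation of supports, disjointness of $\supp{\pos{x}}\cdot s$ and $\supp{\nega{x}}\cdot s$ via cancellativity, and decomposition of a finitely supported $y$ into singleton pieces) and then refers the reader to the group-case argument of \cref{res:general_result_for_semigroups}. Your write-up fills in that outline faithfully, including the correct observation that no density reduction on $x$ is needed here.
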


Naturally, \cref{res:general_result_for_discrete_semigroups} follows from \cref{res:general_result_for_semigroups} for all semigroups that are subsemigroups of groups. It is known that every abelian cancellative semigroup is a subsemigroup of a group, in which case the enveloping group can even be taken to be of the same cardinality as $S$; see \cite[Proposition 1.2.10]{dales_BANACH_ALGEBRAS_AND_AUTOMATIC_CONTINUITY:2000}.  In general, however, a unital cancellative semigroup is not necessarily a subsemigroup of any group; necessary and sufficient conditions for this, and examples where the conditions fail, are given in  \cite[Chapter~10]{clifford_preston_THE_ALGEBRAIC_THEORY_OF_SEMIGROUPS_II}. This shows that \cref{res:general_result_for_discrete_semigroups} has value independent of \cref{res:general_result_for_semigroups}.

\section{Lattice homomorphisms in harmonic analysis}\label{sec:lattice_homomorphisms_in_harmonic_analysis}

\noindent All material is now in place to show that a number of natural positive maps in harmonic analysis are, in fact, lattice homomorphisms. For each of them, all that needs to be done is merely to establish that \cref{res:general_result_for_semigroups} or \cref{res:general_result_for_discrete_semigroups} is applicable in the relevant context. In view of the general nature of these two results, it seems not unlikely that they can have future applications to cases that are not covered in the present section.

\smallskip

Our first result answers the original question mentioned in Section~\ref{sec:introduction_and_overview}, which is \cite[Problem~7]{wickstead:2017c}.

\begin{theorem}\label{res:left_regular_representation_of_measures}
	Let $\group$ be a locally compact group. Then the left regular representation $\leftreg:\measgroupF\to\regularops{\measgroupF}$ is an isometric Banach lattice algebra homomorphism.
\end{theorem}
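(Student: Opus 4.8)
The plan is to obtain the real-scalar case as a direct specialisation of \cref{res:general_result_for_semigroups}, and then to treat the isometry and the passage to complex scalars as two short separate arguments. Concretely, I would take $S=\group$ and $X=Y=Z=\measgroupR$, with the bilinear map $\bil$ equal to convolution $\conv$, so that the associated map $\rep$ is precisely the left regular representation $\leftreg\colon\measgroupR\to\regularops{\measgroupR}$. All the structural hypotheses hold: by \cref{res:embedding_of_M(X)_as_banach_lattice_with_properly_separated_elements} the space $\measgroupR$ is a vector sublattice of $\od{\contc(\group,\RR)}$ that is a Banach lattice, it is Dedekind complete by \cref{ex:spaces_of_measures_two}, and convolution of two positive measures is positive. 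It then remains to check the four numbered conditions of \cref{res:general_result_for_semigroups}.

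Verifying these conditions is mostly a matter of citing earlier results. Condition~(1), the support inclusion $\supp{(\mu\conv\nu)}\subseteq\supp{\mu}\cdot\supp{\nu}$ for $\mu,\nu$ with compact support, is exactly \cref{res:support_of_convolution}. Condition~(2), density of the elements with compact, separated support, is supplied verbatim by the final density statement of \cref{res:embedding_of_M(X)_as_banach_lattice_with_properly_separated_elements}. Condition~(3), density of compactly supported positive measures in $\pos{(\measgroupR)}$, follows from inner regularity: for $\lambda\in\pos{(\measgroupR)}$ and $\varepsilon>0$ there is a compact $K$ with $\lambda(\group)-\lambda(K)<\varepsilon$, and the truncation $\ind_K\lambda$ has compact support with $\norm{\lambda-\ind_K\lambda}<\varepsilon$. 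Condition~(4) holds because, for $\lambda\in\pos{(\measgroupR)}$ with compact support and a Borel set $A\subseteq\supp{\lambda}$, the truncation $\ind_A\lambda\colon B\mapsto\lambda(A\cap B)$ is again a finite positive regular Borel measure dominated by $\lambda$, hence an element of $\measgroupR$. \cref{res:general_result_for_semigroups} then yields that $\leftreg$ is a lattice homomorphism; that it is also an algebra homomorphism is immediate from associativity of convolution, and since it is positive between Banach lattices it is automatically continuous, so it is a Banach lattice algebra homomorphism.

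For the isometry I would exploit the lattice-homomorphism property to reduce to positive measures. Since $\abs{\leftreg(\mu)}=\leftreg(\abs{\mu})$, we get $\regularnorm{\leftreg(\mu)}=\norm{\leftreg(\abs{\mu})}$, and because $\leftreg(\abs{\mu})$ is a \emph{positive} operator its regular norm equals its operator norm. Submultiplicativity of the measure norm gives $\norm{\leftreg(\abs{\mu})}\leq\norm{\abs{\mu}}=\norm{\mu}$, while evaluating at the identity $\delta_{e_\group}$ (of norm $1$) gives $\leftreg(\abs{\mu})\delta_{e_\group}=\abs{\mu}\conv\delta_{e_\group}=\abs{\mu}$, hence the reverse inequality; thus $\regularnorm{\leftreg(\mu)}=\norm{\mu}$.

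Finally I would deduce the complex case $\FF=\CC$ from the real case by complexification, using the correspondence recorded in Section~\ref{sec:banach_lattice_algebras}: the left regular representation of $\measgroupC$ is the complexification of that of $\measgroupR$, so it is a complex Banach lattice algebra homomorphism precisely because its real restriction is a Banach lattice algebra homomorphism. For the norm, $\norm{\leftreg(\mu)}_{\tfs{r},\CC}=\norm{\compabs{\leftreg(\mu)}}=\regularnorm{\leftreg(\compabs{\mu})}=\norm{\compabs{\mu}}=\norm{\mu}$, where $\compabs{\mu}$ is the total-variation measure and the middle equality is the real isometry applied to the positive measure $\compabs{\mu}$. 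I do not expect any single step to be a serious obstacle, since the substantial work is encapsulated in \cref{res:general_result_for_semigroups}; the only points demanding mild care are the truncation argument behind condition~(3) and the bookkeeping in the complex case, where one must confirm that $\compabs{\leftreg(\mu)}=\leftreg(\compabs{\mu})$ so that the modulus-based norms line up.
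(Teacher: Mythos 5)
Your proposal is correct and follows essentially the same route as the paper: both reduce the lattice-homomorphism claim to \cref{res:general_result_for_semigroups} with $S=\group$ and $X=Y=Z$ the embedded copy of $\measgroupR$ from \cref{res:embedding_of_M(X)_as_banach_lattice_with_properly_separated_elements}, verify condition~(1) via \cref{res:support_of_convolution} and the density conditions via the same embedding theorem, and pass to $\FF=\CC$ by complexification. You merely spell out a few points the paper leaves implicit (the truncation arguments for conditions~(3) and~(4), and the reduction of the isometry claim to positive measures via $\abs{\leftreg(\mu)}=\leftreg(\abs{\mu})$), all of which are correct.
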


\begin{proof}
	Since $\measgroupF$ is a unital Banach algebra in which the identity element has norm one, it is clear that $\leftreg$ is an isometric Banach algebra homomorphism. It remains to be shown that $\leftreg$ is a lattice homomorphism.
	
	For this, we start with the case where $\FF=\RR$. Then  \cref{res:embedding_of_M(X)_as_banach_lattice_with_properly_separated_elements} shows that $\measgroupR$ can be embedded as a sublattice of $\contcgroupRod$ via a map $\mu\mapsto\emb{\mu}$ for $\mu\in\measgroupR$, and that, after transport of the norm, the embedded sublattice is a Dedekind complete Banach lattice $X$ such that its elements with compact, separated support are dense. We now resort to \cref{res:general_result_for_semigroups}, where we take $Y$ and $Z$ to be equal to $X$. Then the clauses (2) and (3) of the hypotheses of \cref{res:general_result_for_semigroups} are satisfied, and it is easy to see that clause (4) of these hypotheses is also satisfied. Furthermore, \cref{res:support_of_convolution} shows that clause (1) of the hypothesis of \cref{res:general_result_for_semigroups} is also satisfied when we set
	\[
	\emb{\mu}\bil\emb{\nu}\coloneqq\emb{\mu\conv\nu}
	\]
	for $\mu,\nu\in\measgroupR$.
	An appeal to \cref{res:general_result_for_semigroups} concludes the proof for the case where $\FF=\RR$.
	
	As explained earlier, the complex case follows from the real case on general grounds because the complex Banach lattice algebra $\measgroupC$ is the complexification of the Banach lattice algebra $\measgroupR$.
\end{proof}

Our next result concerns the action of $\measgroupF$ on $\EllpgroupF$ for $1\leq p<\infty$ as defined in \cref{eq:mu_convp_g}. As announced in the beginning of this section, the proof is quite similar to that of \cref{res:left_regular_representation_of_measures}.

\begin{theorem}\label{res:action_of_measures_on_Ellp}
		Let $\group$ be a locally compact group, and take $p$ with $1\leq p<\infty$. For $\mu\in\measgroupF$ and $g\in\EllpgroupF$, define
		\begin{equation*}
		(\rep_\mu g)(s) \coloneqq \int_\group\! g(t^{-1}s)\di{\mu(t)}
		\end{equation*}
		for those $\Hm$-almost $s\in G$ for which the integral exists. Then $\rep_\mu g\in\EllpgroupF$ for all $\mu\in\measgroupF$ and $g\in\EllpgroupF$, $\rep_\mu$ is a regular operator on $\EllpgroupF$ for all $\mu\in\measgroupF$, and the map $\mu\mapsto\rep_\mu$ is an injective Banach lattice algebra homomorphism $\rep:\measgroupF\to\regularops{\EllpgroupF}$.
\end{theorem}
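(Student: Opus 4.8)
The well-definedness of $\rep_\mu g$ as an element of $\EllpgroupF$, the fact that each $\rep_\mu$ is a regular operator, and the fact that $\rep\colon\measgroupF\to\regularops{\EllpgroupF}$ is a positive Banach algebra homomorphism have all already been recorded in \cref{res:action_of_measgroup_on_Ellp_basic_facts,res:actions_on_ellp}. Hence only two things remain to be shown: that $\rep$ is a lattice homomorphism, and that it is injective. As in the proof of \cref{res:left_regular_representation_of_measures}, the plan is to treat the real case $\FF=\RR$ first, reducing the lattice-homomorphism assertion to \cref{res:general_result_for_semigroups} with $S=\group$, and to deduce the complex case afterwards by complexification.

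For the real case, I would use \cref{res:embedding_of_M(X)_as_banach_lattice_with_properly_separated_elements} to embed $\measgroupR$ as a Dedekind complete Banach sublattice $X$ of $\contcgroupRod$ via $\mu\mapsto\emb{\mu}$, and \cref{res:embedding_of_Lp_into_ordercontcRod} to embed $\EllpgroupR$ as a Dedekind complete Banach sublattice $Y=Z$ of $\contcgroupRod$ via $g\mapsto\emb{g}$. Since both embeddings are injective lattice isomorphisms onto their images, one may define a bilinear map $\bil\colon X\times Y\to Z$ by $\emb{\mu}\bil\emb{g}\coloneqq\emb{\mu\convp g}$; it is positive because $\mu\convp g\in\pos{\EllpgroupR}$ whenever $\mu\in\pos{\measgroupR}$ and $g\in\pos{\EllpgroupR}$, and the associated operator-valued map is precisely the embedded form of $\rep$.

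It then remains to verify the four hypotheses of \cref{res:general_result_for_semigroups}. Clauses (2) and (3) are exactly the density statements furnished by \cref{res:embedding_of_M(X)_as_banach_lattice_with_properly_separated_elements} and \cref{res:embedding_of_Lp_into_ordercontcRod}, respectively. Clause (4) holds because, for $g\in\pos{\EllpgroupR}$ and a Borel set $A$, we have $\abs{\ind_A g}\leq\abs{g}$, so $\ind_A g\in\EllpgroupR$, while the compatibility of $\Emb$ with multiplication by bounded Borel functions gives $\ind_A\emb{g}=\emb{\ind_A g}\in Y$. The one genuinely new analytic point, and the step I expect to be the main obstacle, is clause (1): I would establish the support inclusion $\supp{(\mu\convp g)}\subseteq\supp{\mu}\cdot\supp{g}$ for compactly supported $\mu\in\pos{\measgroupR}$ and $g\in\pos{\EllpgroupR}$ directly from the defining formula $(\mu\convp g)(s)=\int_\group g(t^{-1}s)\di{\mu(t)}$ — if $(\mu\convp g)(s)\neq 0$, then $g(t^{-1}s)\neq 0$ for some $t\in\supp{\mu}$, whence $s=t\cdot(t^{-1}s)\in\supp{\mu}\cdot\supp{g}$ — exactly as in \cref{res:support_of_convolution}. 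Since the embeddings preserve supports, in the sense that the support of an embedded measure or function equals its (essential) support, this yields clause (1). An appeal to \cref{res:general_result_for_semigroups} then shows that the embedded $\rep$, and hence $\rep$ itself, is a lattice homomorphism.

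For injectivity in the real case, suppose that $\rep_\mu=0$. Fix a net $(e_i)$ in $\pos{\contcgroupR}$ with $\int_\group e_i(s)\dH(s)=1$ and supports shrinking to $\set{e_\group}$; each $e_i$ lies in $\EllpgroupR$, so $\mu\convp e_i=0$ by assumption. For any $\phi\in\contcgroupR$, Fubini's theorem and the left invariance of $\Hm$ give, via the substitution $s=tu$, that $\int_\group\phi(s)(\mu\convp e_i)(s)\dH(s)=\int_\group\int_\group\phi(tu)e_i(u)\dH(u)\di{\mu(t)}$, and, since $\phi$ is uniformly continuous with compact support while $\mu$ is finite, the right-hand side tends to $\int_\group\phi(t)\di{\mu(t)}=\pairing{\emb{\mu},\phi}$. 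Hence $\emb{\mu}=0$, and the injectivity of $\Emb$ from \cref{res:embedding_of_M(X)} forces $\mu=0$. Finally, as in \cref{res:left_regular_representation_of_measures}, the complex case follows from the real one: $\measgroupC$ and $\EllpgroupC$ are the complexifications of $\measgroupR$ and $\EllpgroupR$, so the complex lattice-homomorphism property and the injectivity of $\rep$ are immediate consequences of their real counterparts.
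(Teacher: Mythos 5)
Your proposal is correct and follows essentially the same route as the paper: embed $\measgroupR$ and $\EllpgroupR$ into $\contcgroupRod$ via \cref{res:embedding_of_M(X)_as_banach_lattice_with_properly_separated_elements,res:embedding_of_Lp_into_ordercontcRod}, transport $\convp$ to a positive bilinear map $\bil$, and invoke \cref{res:general_result_for_semigroups} with $S=\group$ and $Z=Y$, deducing the complex case by complexification. You merely supply details the paper leaves implicit\textemdash the explicit verification of the four hypotheses and the approximate-identity argument for injectivity (which the paper dismisses as well known)\textemdash and both of these check out.
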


\begin{proof} All statements in the theorem are well known, except the one that states that $\rep$ is a lattice homomorphism. The proof for this has a general outline that is similar to that of \cref{res:left_regular_representation_of_measures}.
	
Again, we start with the case where $\FF=\RR$. In this case, \cref{res:embedding_of_M(X)_as_banach_lattice_with_properly_separated_elements}
shows again that $\measgroupR$ can be embedded as a sublattice of $\contcgroupRod$ by means of a map $\mu\mapsto\emb{\mu}$ for $\mu\in\measgroupR$ and that, after transport of the norm, the embedded sublattice is a Banach lattice $X$ such that its elements with compact, separated support are dense. Furthermore, \cref{res:embedding_of_Lp_into_ordercontcRod} shows that $\EllpgroupR$ can be embedded as a sublattice of $\contcgroupRod$ via a map $g\to\emb{g}$ and that, after transport of the norm, the embedded sublattice is a Dedekind complete Banach lattice $Y$ such that its elements with compact support are dense. We now resort to \cref{res:general_result_for_semigroups}, where we take $Z$ to be equal to $Y$, and where we set
\[
\emb{\mu}\bil\emb{g}\coloneqq\emb{\rep_\mu(g)}
\]
for $\mu\in\measgroupR$ and $g\in\EllpgroupR$.
Then the hypotheses of \cref{res:general_result_for_semigroups} are satisfied, and an application of this theorem concludes the proof for the case where $\FF=\RR$.

As explained earlier, the complex case follows from the real case on general grounds because the complex Banach lattice $\measgroupC$ is the complexification of the Banach lattice  $\measgroupR$ and the complex Banach lattice $\EllpgroupC$ is the complexification of the Banach lattice $\EllpgroupR$.
\end{proof}

\begin{remark}
The action of $\measgroupF$ on $\EllpgroupF$ by convolution was previously studied by Arendt, using earlier results by Brainerd and Edwards (see \cite{brainerd_edwards:1966a}) and Gilbert (see \cite{gilbert:1968}) on convolutions. In \cite{arendt:1981}, Arendt showed that the map $\rep$ in \cref{res:action_of_measures_on_Ellp} is a lattice homomorphism when $p=1$, and also when $1<p<\infty$ and $\group$ is amenable. As \cref{res:action_of_measures_on_Ellp} shows, for $1<p<\infty$, the assumption that $\group$ be amenable is redundant.

We shall discuss Arendt's approach in more detail in Section~\ref{sec:possible_further_research_in_ordered_harmonic_analysis}.
\end{remark}

Since $\EllpgroupF$ is a Banach lattice subalgebra of $\measgroupF$, we have the following consequence of \cref{res:action_of_measures_on_Ellp}, where the action of $\EllonegroupF$ on $\EllpgroupF$ is now given by \cref{eq:f_convp_g}. It solves \cite[Problem~6]{wickstead:2017c}. It can also be found in earlier work by Kok (see \cite[Example~7.6]{kok_UNPUBLISHED:2016}), where it was obtained via a different approach.

\begin{corollary}\label{res:action_of_Ellone_on_Ellp}
		Let $\group$ be a locally compact group, and take $p$ with $1\leq p<\infty$. For $f\in\EllonegroupF$ and $g\in\EllpgroupF$, define
		\begin{equation*}
		(\rep_f g)(s) \coloneqq \int_\group\! f(t)g(t^{-1}s)\dH(t)
		\end{equation*}
		for those $\Hm$-almost $s\in G$ for which the integral exists. Then $\rep_f g\in\EllpgroupF$ for all $f\in\EllonegroupF$ and $g\in\EllpgroupF$, $\rep_f$ is a regular operator on $\EllpgroupF$ for all $f\in\EllpgroupF$, and the map $f\mapsto\rep_f$ is an injective Banach lattice algebra homomorphism $\rep:\EllonegroupF\to\regularops{\EllpgroupF}$.
\end{corollary}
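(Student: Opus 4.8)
The plan is to obtain the corollary from \cref{res:action_of_measures_on_Ellp} by restriction, using that $\EllonegroupF$ sits inside $\measgroupF$ as a Banach lattice subalgebra. First I would recall from Section~\ref{sec:locally_compact_groups} that the Radon--Nikod{\'y}m theorem identifies $\EllonegroupF$ isometrically with the subspace of $\measgroupF$ consisting of those measures that are absolutely continuous with respect to a fixed left Haar measure $\Hm$, the correspondence being $f\mapsto f\dH$. As recorded there, this subspace is both an order ideal (hence a vector sublattice) and a closed algebra ideal (hence a subalgebra) of $\measgroupF$; thus $\EllonegroupF$ is a Banach lattice subalgebra of $\measgroupF$ in the real case, and the complex case is obtained by complexification in the usual way.

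Next I would check that, under this identification, the operator $\rep_f$ of the corollary is exactly the restriction to $\EllonegroupF$ of the map $\mu\mapsto\rep_\mu$ of \cref{res:action_of_measures_on_Ellp}. This is immediate from the definitions: writing $\mu=f\dH$, so that $\di{\mu(t)}=f(t)\dH(t)$, the defining formula \cref{eq:mu_convp_g} becomes
\[
(\rep_\mu g)(s)=\int_\group\! g(t^{-1}s)\,f(t)\dH(t)=(\rep_f g)(s),
\]
the right-hand side being \cref{eq:f_convp_g}. Consequently the analytic assertions of the corollary, namely that $\rep_f g\in\EllpgroupF$ and that $\rep_f$ is a regular operator on $\EllpgroupF$, are inherited directly from \cref{res:action_of_measures_on_Ellp}.

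It then remains only to transfer the homomorphism properties from the ambient map to its restriction. By \cref{res:action_of_measures_on_Ellp}, the map $\rep:\measgroupF\to\regularops{\EllpgroupF}$ is an injective Banach lattice algebra homomorphism. The restriction of a lattice homomorphism to a vector sublattice is again a lattice homomorphism, since the lattice operations on a sublattice are those induced from the ambient space; the restriction of an algebra homomorphism to a subalgebra is an algebra homomorphism; and restriction preserves injectivity, while continuity of a positive operator between Banach lattices is automatic. Since $\EllonegroupF$ is a Banach lattice subalgebra of $\measgroupF$, the restricted map $\rep\mid_{\EllonegroupF}:\EllonegroupF\to\regularops{\EllpgroupF}$ is therefore an injective Banach lattice algebra homomorphism, which is the assertion.

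I anticipate no genuine obstacle: the corollary is literally the restriction of \cref{res:action_of_measures_on_Ellp} to the subalgebra $\EllonegroupF$. The only things requiring care are the elementary identity displayed above, which matches the two actions under the embedding $f\mapsto f\dH$, and the observation that $\EllonegroupF$ is simultaneously a vector sublattice and a subalgebra of $\measgroupF$, which is precisely what allows both the lattice- and the algebra-homomorphism properties to descend to the restriction.
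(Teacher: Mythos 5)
Your proposal is correct and is essentially the paper's own argument: the corollary is stated there as an immediate consequence of \cref{res:action_of_measures_on_Ellp}, justified by the observation that $\EllonegroupF$ is a Banach lattice subalgebra of $\measgroupF$ under the identification $f\mapsto f\dH$, exactly as you describe. Your write-up merely makes explicit the matching of \cref{eq:mu_convp_g} with \cref{eq:f_convp_g} under this embedding, which the paper leaves implicit.
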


\begin{remark}\label{rem:norm_for_action_of_Ellone_on_Ellp}
In \cref{res:action_of_Ellone_on_Ellp}, in the case where $p=1$, the left regular representation $\rep:\EllonegroupF\to\regularops{\EllonegroupF}$ is an isometric Banach algebra lattice homomorphism. The fact that $\rep$ is an isometry is an immediate consequence of the fact that $\EllonegroupR$ is a Banach algebra with a positive contractive approximate identity. In view of \cref{res:gilbert}, below, we refrain from making a statement on the isometric nature of $\rep$ if $1<p<\infty$.

It is, of course, also possible to prove \cref{res:action_of_Ellone_on_Ellp} directly from the central result \cref{res:general_result_for_semigroups}, by using \cref{res:embedding_of_Lp_into_ordercontcRod} to obtain an embedded copy $X$ of $\EllonegroupR$ and an embedded copy $Y$ of $\EllpgroupR$ in $\contcgroupRod$, and taking $Z$ to be equal to $Y$.
\end{remark}

Now we turn to the case of semigroups, where we shall benefit from the results in Section~\ref{sec:closed_subspaces_of_locally_compact_spaces} via their r{\^o}le in the proof of \cref{res:general_result_for_semigroups}.

\begin{theorem}\label{res:left_regular_representation_of_beurling_algebras}
	Let $\group$ be a locally compact group, and let $S$ be a closed subspace of $\group$ that is a subsemigroup of $\group$. Suppose that $\omega$ is a weight on $S$.
	Then the left regular representation $\rep:\meas(S,\omega,\FF)\to\regularops{\meas(S,\omega,\FF)}$ of the Beurling algebra $\meas(S,\omega,\FF)$ is a Banach lattice algebra homomorphism.
\end{theorem}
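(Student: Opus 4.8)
The plan is to run the proof exactly parallel to that of \cref{res:left_regular_representation_of_measures}, reducing everything to a single application of \cref{res:general_result_for_semigroups}. First I would dispose of the purely structural content. Since $\meas(S,\omega,\RR)$ is a Dedekind complete Banach lattice algebra, the general discussion of left regular representations in Section~\ref{sec:banach_lattice_algebras} shows that $\rep$ is automatically a positive algebra homomorphism into $\regularops{\meas(S,\omega,\RR)}$, and positive maps between Banach lattices are continuous; hence $\rep$ is a Banach algebra homomorphism and only the \emph{lattice} homomorphism property is at stake. Moreover, because $\meas(S,\omega,\CC)$ is the complexification of $\meas(S,\omega,\RR)$, the complexification principle of Section~\ref{sec:banach_lattice_algebras} reduces the complex case $\FF=\CC$ to the real case, so it suffices to treat $\FF=\RR$.

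For $\FF=\RR$, I would use \cref{res:embedding_of_beurling_algebras_of_measures_as_banach_lattice_with_properly_separated_elements} (equivalently, \cref{res:embedding_of_M(X)_as_banach_lattice_with_properly_separated_elements} applied to the locally compact space $S$) to realise $\meas(S,\omega,\RR)$, via the lattice embedding $\mu\mapsto\emb{\mu}$ together with the transported norm $\norm{\,\cdot\,}_\omega$, as a Dedekind complete Banach lattice $X$ that is a vector sublattice of $\od{\contc(S,\RR)}$. I would then set $Y=Z=X$ and define the positive bilinear map $\bil$ by $\emb{\mu}\bil\emb{\nu}\coloneqq\emb{\mu\conv\nu}$, where $\conv$ is the convolution that $\meas(S,\omega,\RR)$ inherits as a subalgebra of $\measgroupR$. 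Positivity of $\bil$ is precisely the fact that $\meas(S,\omega,\RR)$ is a Banach lattice algebra.

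It then remains to verify the four hypotheses of \cref{res:general_result_for_semigroups}. Conditions~(2) and~(3) are immediate: the density clause of \cref{res:embedding_of_beurling_algebras_of_measures_as_banach_lattice_with_properly_separated_elements} is exactly the statement that the elements of $X$ with compact, separated support are dense, and taking $Y=X$ gives~(3) as well. Condition~(4) follows from the multiplication compatibility $\ind_A\emb{\mu}=\emb{\ind_A\mu}$ recorded in the embedding results, together with the bound $\int_S\omega\di{\abs{\ind_A\mu}}\leq\int_S\omega\di{\abs{\mu}}<\infty$, which shows that the restriction $\ind_A\mu$ of $\mu$ to a Borel set still lies in $\meas(S,\omega,\RR)$. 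The support condition~(1) is where the only real work lies: using that supports are preserved both by $\mu\mapsto\emb{\mu}$ and by the embedding $\od{\contc(S,\RR)}\hookrightarrow\od{\contc(\group,\RR)}$ of \cref{res:embedding_of_order_duals}, one identifies $\supp{(\emb{\mu}\bil\emb{\nu})}=\supp{(\mu\conv\nu)}$, $\supp{\emb{\mu}}=\supp{\mu}$, and $\supp{\emb{\nu}}=\supp{\nu}$, after which the required inclusion $\supp{(\mu\conv\nu)}\subseteq\supp{\mu}\cdot\supp{\nu}$ is \cref{res:support_of_convolution} applied in $\measgroupR$ (valid since $\mu,\nu\in\pos{X}$ have compact support and $S$ is a subsemigroup). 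With all four conditions in place, \cref{res:general_result_for_semigroups} yields that $\rep$ is a lattice homomorphism for $\FF=\RR$.

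Finally, the complex case follows on general grounds, exactly as at the ends of the proofs of \cref{res:left_regular_representation_of_measures,res:action_of_measures_on_Ellp}. I expect the main obstacle to be the bookkeeping in condition~(1): one must confirm that the convolution inherited by $\meas(S,\omega,\RR)$ is genuinely the restriction of the group convolution on $\measgroupR$, and then track supports correctly through the embedding into $\od{\contc(S,\RR)}$ and, internally to \cref{res:general_result_for_semigroups}, through the further embedding into $\od{\contc(\group,\RR)}$, so that \cref{res:support_of_convolution} is applicable.
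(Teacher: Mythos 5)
Your proposal is correct and follows exactly the paper's route: embed $\meas(S,\omega,\RR)$ via \cref{res:embedding_of_beurling_algebras_of_measures_as_banach_lattice_with_properly_separated_elements}, take $Y=Z=X$ with $\emb{\mu}\bil\emb{\nu}=\emb{\mu\conv\nu}$, apply \cref{res:general_result_for_semigroups} as in the proof of \cref{res:left_regular_representation_of_measures}, and pass to $\FF=\CC$ by complexification. The paper compresses the verification of the four hypotheses into ``completely analogously''; your explicit check of conditions (1)--(4), including the observation that $\ind_A\mu$ stays in $\meas(S,\omega,\RR)$ and that supports are preserved through the embeddings so that \cref{res:support_of_convolution} applies, is exactly the intended argument.
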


\begin{proof}
We again start with the case where $\FF=\RR$. Then \cref{res:embedding_of_beurling_algebras_of_measures_as_banach_lattice_with_properly_separated_elements} shows that $\meas(S,\omega,\RR)$ can be embedded as a vector sublattice of $\od{\contc(S,\RR)}$ and that, after transport of the norm, the embedded copy becomes a Dedekind complete Banach lattice such that its elements with compact, separated support are dense. Completely analogously to the proof of \cref{res:left_regular_representation_of_measures}, \cref{res:general_result_for_semigroups} then shows that the present theorem holds for $\FF=\RR$. As earlier, it then also holds for $\FF=\CC$.
\end{proof}

In a similar vein, we have the following.

\begin{theorem}\label{res:positive_real_line_lattice_homomorphism}
Suppose that $\omega$ is a weight on $\pos{\RR}$. Then the left regular representation $\leftreg:\Ell^1(\pos{\RR},\omega,\FF)\to\regularops{\Ell^1(\pos{\RR},\omega,\FF)}$ of $\Ell^1(\pos{\RR},\omega,\FF)$ is a Banach lattice algebra homomorphism.
\end{theorem}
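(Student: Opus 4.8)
The plan is to mimic the proof of \cref{res:left_regular_representation_of_measures}: realise $\Ell^1(\pos{\RR},\omega,\RR)$ as an embedded vector sublattice of $\od{\contc(\pos{\RR},\RR)}$ and then invoke \cref{res:general_result_for_semigroups}. As in the companion results of this section, it is enough to treat the case $\FF=\RR$, because $\Ell^1(\pos{\RR},\omega,\CC)$ is the complexification of $\Ell^1(\pos{\RR},\omega,\RR)$, so that the complex statement follows from the real one. Throughout I take $\group$ to be the additive group $\RR$ and $S\coloneqq\pos{\RR}=[0,\infty)$, which is a non-empty, closed subspace of $\RR$ that is a subsemigroup under addition; the semigroup product $A_1\cdot A_2$ is then the sumset $A_1+A_2$.

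First I would set up the embedding. Since $\omega$ is continuous and strictly positive, there is a positive regular Borel measure $\mu$ on $\pos{\RR}$ with $\di{\mu}=\omega\di{t}$ (the regularity is as in the proof of \cref{res:embedding_of_beurling_algebras_of_measures_as_banach_lattice_with_properly_separated_elements}), and comparing \cref{eq:half_line_one,eq:half_line_two} with the usual $1$-norm shows that $\Ell^1(\pos{\RR},\omega,\RR)$ coincides, isometrically and as a vector lattice, with $\Ell^1(\pos{\RR},\borelsalg,\mu,\RR)$. Applying \cref{res:embedding_of_Lp_into_ordercontcRod} with $p=1$ and with $\pos{\RR}$ as the ambient locally compact space, I obtain an injective lattice homomorphism $\Emb\colon g\mapsto\emb{g}$ that, after transport of the norm, embeds $\Ell^1(\pos{\RR},\omega,\RR)$ as a Dedekind complete Banach lattice $X$ in $\od{\contc(\pos{\RR},\RR)}$ whose elements with compact, separated support are dense. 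I then put $Y\coloneqq Z\coloneqq X$.

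Next I would transport the convolution. For $f,g\in\Ell^1(\pos{\RR},\omega,\RR)$ the product $f\conv g$ of \cref{eq:half_line_three} again lies in $\Ell^1(\pos{\RR},\omega,\RR)$ (this is the Beurling-algebra fact recorded above), and it is positive whenever $f,g\geq 0$, since the integrand $f(t)g(s-t)$ is then non-negative; using the injectivity of $\Emb$ I define $\bil\colon X\times X\to X$ by $\emb{f}\bil\emb{g}\coloneqq\emb{f\conv g}$, so that $\rep$ becomes the embedded left regular representation. It then remains to check the four hypotheses of \cref{res:general_result_for_semigroups}. Conditions (2) and (3) are exactly the density statements provided by \cref{res:embedding_of_Lp_into_ordercontcRod}, and condition (4) holds because $\Ell^1(\pos{\RR},\omega,\RR)$ is stable under multiplication by the bounded Borel function $\ind_A$, combined with the compatibility $\emb{\ind_A g}=\ind_A\emb{g}$ of \cref{res:embedding_of_Lp_into_ordercontcRod}.

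The only step with genuine content is condition (1): the support inclusion $\supp{(\emb{f}\bil\emb{g})}\subseteq\supp{\emb{f}}\cdot\supp{\emb{g}}$ for positive $f,g$ with compact support. I would read this off directly from \cref{eq:half_line_three}: if $f$ vanishes (almost everywhere) off $\supp{f}$ and $g$ off $\supp{g}$, then $f(t)g(s-t)\neq 0$ forces $t\in\supp{f}$ and $s-t\in\supp{g}$, whence $s\in\supp{f}+\supp{g}$; thus $f\conv g$ vanishes almost everywhere outside the compact, hence closed, sumset $\supp{f}+\supp{g}$, and so $\supp{(f\conv g)}\subseteq\supp{f}+\supp{g}$. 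Transporting through $\Emb$, which preserves supports and sends the sumset to the semigroup product $\supp{\emb{f}}\cdot\supp{\emb{g}}$, yields condition (1). I expect the main (and only minor) obstacle to be the bookkeeping of supports in the order-dual picture, namely that $\supp{\emb{g}}$ is the essential support of $g$ relative to $\mu$; but since $\mu$ has full support this agrees with the support appearing in the inclusion above. With all four hypotheses in hand, \cref{res:general_result_for_semigroups} shows that $\rep$ is a lattice homomorphism, settling the case $\FF=\RR$, and the complex case follows as indicated.
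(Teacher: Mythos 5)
Your proposal is correct and follows essentially the same route as the paper: identify $\Ell^1(\pos{\RR},\omega,\RR)$ with $\Ell^1(\pos{\RR},\borelsalg,\omega\,\upd t,\RR)$ for the regular Borel measure $\omega\,\upd t$, embed it via \cref{res:embedding_of_Lp_into_ordercontcRod}, and apply \cref{res:general_result_for_semigroups}, with the complex case following by complexification. The paper's proof is terser and leaves the verification of the four hypotheses (in particular the support inclusion for the convolution) to the reader, whereas you spell these out; the substance is the same.
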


\begin{proof}
We start with $\FF=\RR$. Since $\omega$ is strictly positive and continuous, \cite[Exercise~7.2.9]{folland_REAL_ANALYSIS_SECOND_EDITION:1999} yields that the measure $\omega\di{x}$ is a regular Borel measure on $\pos{\RR}$. Hence \cref{res:embedding_of_Lp_into_ordercontcRod} shows that $\Ell^1(\pos{\RR},\omega,\RR)$ can be embedded as a vector sublattice of $\od{\contc{(\pos{\RR},\RR)}}$. An application of \cref{res:general_result_for_semigroups} then shows that the present theorem holds for $\FF=\RR$. As earlier, it then also holds for $\FF=\CC$ on general grounds.
\end{proof}

We conclude the results in this section with an application of \cref{res:general_result_for_discrete_semigroups}. The proof will be rather obvious by now, and is left to the reader.

\begin{theorem}
Let $S$ be a cancellative semigroup, and let $\omega$ be a weight on $S$. Then the left regular representation $\rep:\ell^1(S,\omega,\FF)\to\regularops{\ell^1(S,\omega,\FF)}$ of $\ell^1(S,\omega,\FF)$ is an injective Banach lattice algebra homomorphism.
\end{theorem}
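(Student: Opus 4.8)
The plan is to obtain the statement as a direct application of \cref{res:general_result_for_discrete_semigroups}, following the remark that precedes it, and to treat the real case $\FF=\RR$ first. Since $S$ carries the discrete topology, $\contc(S,\RR)$ is the space of finitely supported real functions on $S$, and $\od{\contc(S,\RR)}$ is identified with the space of all real functions on $S$; under this identification the support of an element is simply the set on which the associated function is nonzero, which is automatically closed. By the discussion in Section~\ref{sec:locally_compact_semigroups}, $\ell^1(S,\omega,\RR)$ is a vector sublattice of $\od{\contc(S,\RR)}$ and a Dedekind complete Banach lattice algebra for the convolution determined by $\delta_{s_1}\conv\delta_{s_2}=\delta_{s_1s_2}$. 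I would therefore invoke \cref{res:general_result_for_discrete_semigroups} with $X=Y=Z=\ell^1(S,\omega,\RR)$ and with $\bil$ equal to this convolution, so that $\rep_x(y)=x\conv y$ is exactly the left regular representation.

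It then remains to check the hypotheses. Positivity of $x\bil y$ for $x,y\in\pos{(\ell^1(S,\omega,\RR))}$ is immediate from the Banach lattice algebra structure. For clause~(1), let $x\in\pos X$ and let $y\in\pos Y$ have finite support. From $(x\conv y)(u)=\sum_{ab=u}x(a)y(b)$ (a finite sum, well defined by cancellativity) one sees that $(x\conv y)(u)\neq 0$ forces $u=ab$ with $a\in\supp x$ and $b\in\supp y$, whence $\supp{(x\conv y)}\subseteq\supp x\cdot\supp y$. Clause~(2) holds because the finite partial sums of a function in $\ell^1(S,\omega,\RR)$ converge to it in $\norm{\,\cdot\,}_\omega$, and truncating a positive element to a finite subset of its support produces a positive finitely supported approximant. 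Clause~(3) is clear, since for finitely supported $y$ and $s\in S$ one has $\ind_{\{s\}}y=y(s)\delta_s\in\ell^1(S,\omega,\RR)$.

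Granting these, \cref{res:general_result_for_discrete_semigroups} yields that $\rep$ is a lattice homomorphism; as the left regular representation is always an algebra homomorphism and maps into the regular operators $\regularops{\ell^1(S,\omega,\RR)}$ by positivity, it is a Banach lattice algebra homomorphism. For injectivity\textemdash the one feature not already guaranteed by the general machinery\textemdash I would argue directly from cancellativity: if $\rep_f=0$ then, fixing any $s\in S$, we have $f\conv\delta_s=\sum_t f(t)\delta_{ts}=0$, and since the map $t\mapsto ts$ is injective the point masses $\delta_{ts}$ are pairwise distinct, so $f(t)=0$ for every $t$, i.e.\ $f=0$. The complex case follows on general grounds, because $\ell^1(S,\omega,\CC)$ is the complexification of $\ell^1(S,\omega,\RR)$. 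No step presents a genuine obstacle; the only points deserving attention are the discrete identification of supports used in clause~(1) and the cancellativity argument for injectivity, which is precisely what upgrades the conclusion beyond the non-injective semigroup result \cref{res:left_regular_representation_of_beurling_algebras}.
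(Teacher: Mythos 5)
Your proposal is correct and follows exactly the route the paper intends: the paper leaves this proof to the reader as a direct application of \cref{res:general_result_for_discrete_semigroups} with $X=Y=Z=\ell^1(S,\omega,\RR)$, and your verification of the three hypotheses, the cancellativity argument for injectivity via $f\conv\delta_s$, and the passage to $\FF=\CC$ by complexification are all as intended. No gaps.
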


We recall that $\ell^1(S,\omega,\FF)$ can be a radical Banach algebra, so that our theorems on left regular representations being Banach lattice algebra homomorphisms are not restricted to the semisimple case.

\begin{remark}\label{rem:left_regular_representation_overview} Let us collect what we know about the left regular representation of a Dedekind complete Banach lattice algebra $\alg$ being a Banach lattice algebra homomorphism from $\alg$ into $\regularops{\alg}$ or not.
	
	On the positive side, we have the following.
	
	\begin{theorem} The left regular representation is a \uppars{real or complex}\ Banach lattice algebra homomorphism from $\alg$ into $\regularops{\alg}$ in the following cases:
		\begin{enumerate}
			\item $\alg$ is the measure algebra $\measgroupF$ of a locally compact group $\group$;
			\item $\alg$ is the group algebra $\EllonegroupF$ of a locally compact group $\group$;
			\item $\alg$ is a Beurling algebra $\meas(S,\omega,\FF)$, where $S$ is a closed subspace of a locally compact group $\group$ that is a subsemigroup of $\group$, and where $\omega$ is a weight on $S$;
			\item $\alg=\Ell^1(\pos{\RR},\omega,\FF)$, where $\omega$ is a weight on $\pos{\RR}$;
			\item $\alg=\ell^1(S,\omega,\FF)$, where $S$ is a cancellative semigroup and $\omega$ is a weight on $S$;
			\item $\alg=\regularops{\vl}$ for a Dedekind complete Banach lattice $\vl$.
		\end{enumerate}
	\end{theorem}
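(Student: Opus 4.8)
The plan is to dispatch cases (1)--(5) by citation and to give a self-contained argument for the genuinely new case (6). For the first five cases there is nothing to do beyond recognising each as a result already established: case (1) is \cref{res:left_regular_representation_of_measures}; case (2) is the specialisation to $p=1$ of \cref{res:action_of_Ellone_on_Ellp}, as recorded in \cref{rem:norm_for_action_of_Ellone_on_Ellp}; case (3) is \cref{res:left_regular_representation_of_beurling_algebras}; case (4) is \cref{res:positive_real_line_lattice_homomorphism}; and case (5) is the immediately preceding theorem on cancellative semigroups. So the only work lies in (6), for which the central results of the paper do not apply, since $\regularops\vl$ is not of the form $\od{\contc(S,\RR)}$.

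For case (6), recall that $\regularops\vl$ is a Dedekind complete Banach lattice algebra and that the left regular representation of such an algebra is automatically a positive algebra homomorphism $\leftreg\colon\regularops\vl\to\regularops{\regularops\vl}$. Writing $L_T\coloneqq\leftreg(T)$ for the operator $S\mapsto TS$, it remains only to show that $L$ is a lattice homomorphism, and by the usual real-to-complex reduction (using that $\regularops{\compvl}$ is the complexification of $\regularops\vl$, see \cref{ex:complex_BLA_of_operators}) it suffices to treat the real case and prove $\abs{L_T}=L_{\abs T}$ for each $T$. One inequality is free: since $L$ is positive and $\pm T\le\abs T$, we have $\pm L_T=L_{\pm T}\le L_{\abs T}$, whence $\abs{L_T}\le L_{\abs T}$. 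For the reverse inequality I would compute $\abs{L_T}$ via the Riesz--Kantorovich formula \eqref{eq:RK1} in the Dedekind complete lattice $\regularops{\regularops\vl}$, obtaining, for $S\ge 0$,
\[
\abs{L_T}(S)=\sup\desset{\abs{TR}:R\in\regularops\vl,\ \abs R\le S}.
\]
To prove $\abs{L_T}(S)\ge L_{\abs T}(S)=\abs T S$ it is enough to test against an arbitrary $v\in\pos\vl$. Since $\abs{L_T}(S)\ge\abs{TR}$ as operators and $\abs{TR}(v)\ge\abs{(TR)v}=\abs{T(Rv)}$, while the Riesz--Kantorovich formula \eqref{eq:RK1} for the single operator $\abs T$ gives $(\abs T S)(v)=\abs T(Sv)=\sup\desset{\abs{Tw}:\abs w\le Sv}$, the required pointwise inequality follows once we know that every $w$ with $\abs w\le Sv$ is of the form $w=Rv$ for some $R$ with $\abs R\le S$.

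This interpolation statement is the crux; it cannot be bypassed by a single clever choice of $R$, since already for $\vl=\RR^2$ the displayed supremum is a genuine lattice supremum that is not attained. I would reduce it to the sub-lemma
\[
0\le u\le S_0 v\ \Longrightarrow\ \exists\,P\in\regularops\vl\text{ with }0\le P\le S_0\text{ and }Pv=u,
\]
by splitting $w=\pos w-\nega w$: first produce $0\le P\le S$ with $Pv=\pos w$, then apply the sub-lemma to $S_0=S-P\ge 0$ (noting $S_0 v=Sv-\pos w\ge\nega w$) to produce $0\le N\le S_0$ with $Nv=\nega w$, and set $R=P-N$, so that $\abs R\le P+N\le S$ and $Rv=w$. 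The sub-lemma itself I would prove by the Hahn--Banach--Kantorovich extension theorem for operators into a Dedekind complete lattice: on the subspace $\RR v$ define the linear map $P_0(\lambda v)\coloneqq\lambda u$, and on all of $\vl$ the sublinear map $q(x)\coloneqq S_0(\pos x)$. One checks $P_0\le q$ on $\RR v$ (using $u\le S_0 v$ when $\lambda\ge 0$ and $u\ge 0$ when $\lambda<0$), and any dominated linear extension $P\le q$ then satisfies $Px\le S_0 x$ for $x\ge 0$ and $Px\le 0$ for $x\le 0$, i.e. $0\le P\le S_0$, with $Pv=u$.

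The main obstacle is thus precisely this interpolation/sub-lemma; once it is in hand the remainder is routine bookkeeping with the Riesz--Kantorovich formulae, and the device that resolves it is the sublinear dominant $q(x)=S_0(\pos x)$ together with the operator form of the Hahn--Banach theorem. With (6) established and (1)--(5) cited, the theorem follows.
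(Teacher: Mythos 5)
Your handling of cases (1)--(5) is exactly the paper's: both you and the paper dispose of them by pointing to \cref{res:left_regular_representation_of_measures}, \cref{res:action_of_Ellone_on_Ellp} with $p=1$ (cf.\ \cref{rem:norm_for_action_of_Ellone_on_Ellp}), \cref{res:left_regular_representation_of_beurling_algebras}, \cref{res:positive_real_line_lattice_homomorphism}, and the theorem on $\ell^1(S,\omega,\FF)$ immediately preceding the statement. For case (6) you genuinely diverge: the paper settles it by a bare citation of Synnatzschke's theorem on two-sided multiplication operators \cite[Satz~3.1]{synnatzschke:1980}, of which $\abs{L_T}=L_{\abs{T}}$ is the special case of a one-sided multiplication, whereas you give a complete, self-contained proof. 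Your argument is correct throughout: positivity of $\leftreg$ yields $\abs{L_T}\leq L_{\abs{T}}$; the Riesz--Kantorovich formula \cref{eq:RK1} in $\regularops{\regularops{\vl}}$, evaluated at $v\in\pos{\vl}$, reduces the reverse inequality to your interpolation claim that every $w$ with $\abs{w}\leq Sv$ is of the form $Rv$ with $\abs{R}\leq S$; the two-step reduction via $w=\pos{w}-\nega{w}$ and $R=P-N$ (with $\abs{R}\leq P+N\leq S$, and the hypotheses $\pos{w}\leq Sv$ and $(S-P)v\geq\nega{w}$ checked correctly) is sound; and the sub-lemma itself is validly obtained from the Kantorovich vector-valued Hahn--Banach theorem with sublinear dominant $q(x)=S_0(\pos{x})$, which is legitimate precisely because $\vl$ is Dedekind complete. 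Your sub-lemma is in fact the known statement that, for fixed $v\in\pos{\vl}$, the evaluation map $T\mapsto Tv$ from $\regularops{\vl}$ to $\vl$ is interval preserving. The trade-off between the two routes: the paper's citation situates case (6) within the more general theory of two-sided multiplications $S\mapsto ASB$ on spaces of regular operators (and \cref{rem:left_regular_representation_overview} points onward to that literature), while your argument is elementary and self-contained, and it isolates the structural reason\textemdash interval preservation of evaluation at a positive element\textemdash why the left-multiplication case of Synnatzschke's result holds; this is a worthwhile clarification, at the cost of roughly a page of Hahn--Banach bookkeeping that the paper avoids.
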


The first five of these results can be found in the present section. The sixth one follows from a result of Synnatzschke's on two-sided multiplication operators; see \cite[Satz~3.1]{synnatzschke:1980}. For further results on two-sided multiplication operators on and between vector lattices of regular operators we refer to \cite{chen_schep:2016,wickstead:2015}.

One could argue that \cref{res:general_result_for_semigroups,res:general_result_for_discrete_semigroups} indicate that the left regular representation will be a Banach lattice algebra homomorphism for very many Dedekind complete Banach lattice algebras on groups or semigroups, whenever the multiplication is akin to a convolution. We are, in fact, not aware of a Dedekind Banach lattice algebra $\alg$ in harmonic analysis where the left regular representation of $\alg$ is not a lattice homomorphism from $\alg$ into $\regularops{\alg}$.
	
	On the negative side, there exist uncountably many two-dimensional, mutually non-isomorphic, commutative Banach lattice algebras with a positive identity element of norm one that have no faithful, finite-dimensional Banach lattice algebra representations at all; see \cite{wickstead:2017a}. In particular, their left regular representations are \emph{not} lattice homomorphisms.
	
	It is unclear if there is an `underlying' property that distinguishes the above Banach lattice algebras on the positive side from those on the negative side. Such a property, and preferably one that is easily verified or falsified in a given case, would be desirable. This question is posed in \cite[Problem~1]{wickstead:2017c}, together with various refinements of it.
	
\end{remark}

\section{Further questions in ordered harmonic analysis}\label{sec:possible_further_research_in_ordered_harmonic_analysis}

\noindent
The previous sections were centred around a convolution-like bilinear map from two Banach lattices on a locally compact (semi)group to a third. There do not seem to be too many results available with the same flavour  of `ordered harmonic analysis', i.e., results that are in the area where harmonic analysis and positivity meet. In this section, we shall discuss results by Arendt, Brainerd and Edwards, and Gilbert that are at this interface and that are related to our results in Section~\ref{sec:lattice_homomorphisms_in_harmonic_analysis}. Our exposition is based on \cite[Section~3]{arendt:1981}, to which the reader is referred for details and additional material. As we shall see, this discussion leads to natural research questions in ordered harmonic analysis. We hope to be able to report on these questions in the future.

\smallskip

Let $\group$ be a locally compact group, and let $1\leq p<\infty$. Then $\group$ acts (not generally isometrically) on $\EllpgroupC$ via the formula
\[
\rho_tf(s)\coloneqq f(st)
\]
for $s,t\in\group$ and $f\in\EllpgroupC$. We shall be interested in operators on $\EllpgroupC$ that commute with all $\rho_s$. To this end, we set
\[
\mathrm{CV}_p(\group,\CC)\coloneqq\desset{T\in\bounded{(\EllpgroupC)}: T\circ\rho_s=\rho_s\circ T\text{ for all }s\in\group}
\]
and
\[
\mathrm{CV}_{p,\mathrm r}(\group,\CC)\coloneqq\desset{T\in\regularops{\EllpgroupC}: T\circ\rho_s=\rho_s\circ T\text{ for all }s\in\group}.
\]
Then $\mathrm{CV}_{p,\mathrm r}(\group,\CC)$ is a complex Banach lattice subalgebra of $\regularops{\EllpgroupC}$. There is an easy proof of this fact, as follows. For $t\in\group$, the map $T\mapsto\rho_{t^{-1}}\circ T\circ\rho_t$ from $\regularops{\EllpgroupC}$ into itself is an algebra automorphism of $\regularops{\EllpgroupC}$. It is a positive map, and since its inverse is clearly also positive\textemdash it is the map $T\mapsto\rho_t\circ T\circ\rho_{t^{-1}}$\textemdash it is a complex Banach algebra lattice automorphism. Hence its fixed point set, which is the commutant of $\rho_t$ in $\regularops{\EllpgroupC}$, is a complex Banach lattice subalgebra of $\regularops{\EllpgroupC}$. Since $\mathrm{CV}_{p,\mathrm r}(\group,\CC)$ is the intersection of these commutants as $t$ ranges over $\group$, the space $\mathrm{CV}_{p,\mathrm r}(\group,\CC)$ is indeed a complex Banach lattice subalgebra of $\regularops{\EllpgroupC}$. This argument is due to Arendt; see \cite[Proof of Proposition~3.3]{arendt:1981}.

There is an easy way to obtain elements of $\mathrm{CV}_{p,\mathrm r}(\group,\CC)$ from elements of $\measgroupC$. Take $\mu\in\measgroupC$, and set (we repeat \cref{eq:mu_convp_g} for convenience)
\begin{equation}\label{eq:left_convolution_with_measure_repeated}
(\mu\convp f)(s) \coloneqq \int_\group\! f(t^{-1}s)\di{\mu(t)}
\end{equation}
for  $f\in\EllpgroupC$ and $s\in\group$. It is easily checked that the (left) convolution operator $\rep_{\mu}$ on $\EllpgroupC$ that is thus defined commutes with all right translations. Obviously, if $\mu$ is positive, then $\rep_{\mu}$ is a positive element of $\mathrm{CV}_{p,\mathrm r}(\group,\CC)$. Conversely, if $T$ is a positive element of $\mathrm{CV}_{p,\mathrm r}(\group,\CC)$, then, according to \cite{brainerd_edwards:1966a}, there exists a positive regular Borel measure $\mu$ on $\group$ such that $Tf$ equals $\mu\convp f$ as in \cref{eq:left_convolution_with_measure_repeated} for all $f\in\contcgroupC$ and $s\in\group$. Note that we do not write that $T=\rep_{\mu}$ because this representation theorem by Brainerd and Edwards does not assert that $\mu$ is a \emph{bounded} measure. When $p=1$ this is always the case, but for $1<p<\infty$ this is related to whether or not $\group$ is amenable. The following result is due to Gilbert; see \cite[Theorem~A]{gilbert:1968} and also \cite[Theorem~18.3.6]{dixmier_C-STAR-ALGEBRAS_ENGLISH_NORTH_HOLLAND_EDITION:1977}, \cite[Theorem~17]{godement:1948}, \cite[Theorem~2.2.1]{greenleaf_INVARIANT_MEANS_ON_TOPOLOGICAL_GROUPS_AND_THEIR_APPLICATIONS:1969}, 
\cite{reiter:1965}, and \cite[Definition~8.3.1, Theorems~8.3.2, and Theorem~8.3.18]{reiter_stegeman_CLASSICAL_HARMONIC_ANALYSIS_AND_LOCALLY_COMPACT_GROUPS_SECOND_EDITION:2000} for the equivalence of various characterisations of amenable locally compact groups.

\begin{theorem}\label{res:gilbert}
	 Let $\group$ be a locally compact group, and let $1<p<\infty$. Then the following are equivalent:
	\begin{enumerate}
		\item $\group$ is amenable;
		\item $\norm{\rep_{\mu}}=\norm{\mu}$ for all $\mu\in\pos{\measgroupC}$;
		\item Whenever $\mu$ is a positive regular Borel measure on $\group$ such that $\mu\convp f$, as defined in \cref{eq:left_convolution_with_measure_repeated}, is in $\EllpgroupC$ for all $f\in\contcgroupC$, and such that there exists a $c\geq 0$ such that $\norm{\mu \convp f}_p\leq c \norm{f}_p$ for all $f\in\contcgroupC$, then $\mu\in\pos{\measgroupC}$.
	\end{enumerate}
\end{theorem}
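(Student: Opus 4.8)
The plan is to establish the two equivalences $(1)\Leftrightarrow(2)$ and $(2)\Leftrightarrow(3)$, treating $(1)\Leftrightarrow(2)$ as the genuinely harmonic-analytic core and $(2)\Leftrightarrow(3)$ as a softer consequence of it. Throughout I write $(L_t g)(s)\coloneqq g(t^{-1}s)$, so that $\rep_\mu g=\mu\convp g=\int_\group L_t g\di{\mu(t)}$ as a vector-valued integral, and I recall from \cref{res:action_of_measgroup_on_Ellp_basic_facts} that $\norm{\rep_\mu}\leq\norm{\mu}$ always holds. Since the operators are positive, $\norm{\rep_\mu f}_p\leq\norm{\rep_\mu\abs{f}}_p$, so norms may be tested on $f\geq 0$; as only norms occur, I may work with real, positive data and ignore the complexification.

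For $(1)\Rightarrow(2)$ I would invoke Reiter's condition $(P_p)$: an amenable $\group$ admits, for every compact $K\subseteq\group$ and every $\varepsilon>0$, a function $f\geq 0$ with $\norm{f}_p=1$ and $\sup_{t\in K}\norm{L_tf-f}_p<\varepsilon$ (see the references cited before the theorem). For $\nu\in\pos{\measgroupR}$ with $\supp{\nu}\subseteq K$ this yields $\norm{\nu\convp f-\nu(\group)f}_p\leq\int_K\norm{L_tf-f}_p\di{\nu}\leq\varepsilon\norm{\nu}$, whence $\norm{\rep_\nu}\geq\norm{\nu}$ and so $\norm{\rep_\nu}=\norm{\nu}$; a truncation $\mu=\mu|_K+\mu|_{\group\setminus K}$ via inner regularity upgrades this to arbitrary $\mu\in\pos{\measgroupR}$. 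For $(2)\Rightarrow(1)$ I would run the argument in reverse using the uniform convexity of $\EllpgroupF$ for $1<p<\infty$: applying $(2)$ to $\mu=\sum_{i=0}^n\delta_{s_i}$ with $s_0=e_\group$ produces unit vectors $f_k$, which may be taken positive by passing to $\abs{f_k}$, with $\norm{\sum_i L_{s_i}f_k}_p\to n+1$; uniform convexity then forces $\norm{L_{s_i}f_k-f_k}_p\to 0$, i.e. approximate invariance over arbitrary finite sets, and the weak-$\ast$ cluster points in $\od{(\EllpgroupR)}$ of the densities $f_k^{\,p}$ supply a left-invariant mean, so $\group$ is amenable.

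For $(2)\Rightarrow(3)$ I would argue directly. If $\mu$ is a positive regular Borel measure with $\norm{\mu\convp f}_p\leq c\norm{f}_p$ for all $f\in\contcgroupR$ with $f\geq 0$, then for each compact $K$ the finite measure $\mu|_K\in\pos{\measgroupR}$ satisfies $(\mu|_K)\convp f\leq\mu\convp f$ pointwise, so $\norm{\rep_{\mu|_K}}\leq c$; by $(2)$, $\mu(K)=\norm{\mu|_K}=\norm{\rep_{\mu|_K}}\leq c$, and inner regularity of $\mu$ on the open set $\group$ gives $\mu(\group)\leq c<\infty$, i.e. $\mu\in\pos{\measgroupR}$. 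The reverse $(3)\Rightarrow(2)$ I would prove contrapositively: assuming $(2)$ fails, fix $\mu_0\in\pos{\measgroupR}$ with $c_0\coloneqq\norm{\rep_{\mu_0}}<m_0\coloneqq\mu_0(\group)$, choose $\rho\in(c_0,m_0)$, and set $\nu\coloneqq\sum_{n\geq1}\rho^{-n}\mu_0^{\star n}$. Since total mass is multiplicative and operator norm submultiplicative for positive measures, $\sum_n\rho^{-n}m_0^n=\infty$ while $\sum_n\rho^{-n}c_0^n<\infty$; the latter shows that $\rep_\nu=\sum_n\rho^{-n}\rep_{\mu_0}^{\,n}$ converges in operator norm to a bounded operator, so $\nu$ is locally finite and $\norm{\nu\convp f}_p\leq\bigl(\sum_n(c_0/\rho)^n\bigr)\norm{f}_p$, whereas $\nu\geq\sum_{n=1}^N\rho^{-n}\mu_0^{\star n}$ for every $N$ forces $\nu(\group)=\infty$. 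Thus $\nu$ violates $(3)$. (Alternatively one may extract the representing locally finite measure from the Brainerd--Edwards representation of positive operators in $\mathrm{CV}_{p,\mathrm r}$ cited in the text.)

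The main obstacle is the amenability equivalence $(1)\Leftrightarrow(2)$: it is exactly here that the external theory of amenable groups enters, through Reiter's condition in one direction and, in the other, through the uniform convexity of $\EllpgroupF$ (which is why the hypothesis $1<p<\infty$ is needed, the case $p=1$ being degenerate) together with the standard passage from finite-set approximate invariance to an invariant mean. By contrast $(2)\Leftrightarrow(3)$ is elementary once $(1)\Leftrightarrow(2)$ is in hand, the only delicate point being to confirm that the measure $\nu$ constructed in $(3)\Rightarrow(2)$ is a genuine regular Borel measure, which follows from the operator-norm convergence of $\rep_\nu$ and monotone convergence.
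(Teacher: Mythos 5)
The paper does not prove this theorem at all: it is imported verbatim from the literature, with the attribution ``due to Gilbert'' and citations to \cite{gilbert:1968} and to the standard references on amenability, so there is no in-paper argument to compare yours against. Your sketch is, in substance, the standard proof of Gilbert's theorem: Reiter's condition $(P_p)$ plus truncation for $(1)\Rightarrow(2)$, uniform convexity of $\Ell^p$ for $1<p<\infty$ plus Day's passage from approximate invariance to an invariant mean for $(2)\Rightarrow(1)$, restriction to compacta plus inner regularity for $(2)\Rightarrow(3)$, and the geometric series $\nu=\sum_n\rho^{-n}\mu_0^{\star n}$ of convolution powers for $(3)\Rightarrow(2)$; all four steps are sound. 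Two points deserve tightening. First, in $(2)\Rightarrow(1)$ the invariant mean is a weak$^\ast$ cluster point of the densities $f_k^{\,p}$ viewed as states on $\Ell^\infty(\group,\CC)$, i.e.\ it lives in $\Ell^\infty(\group,\CC)^\prime$, not in $\od{(\EllpgroupR)}$; one also needs the standard inequality converting $\norm{L_sf_k-f_k}_p\to 0$ into $\norm{L_s(f_k^{\,p})-f_k^{\,p}}_1\to 0$ and a net indexed by pairs (finite set, $\varepsilon$) to obtain simultaneous invariance. Second, in $(3)\Rightarrow(2)$ the verification that $\nu$ is a \emph{regular} Borel measure in the sense required by clause (3) (outer regular on Borel sets, inner regular on open sets) is not automatic for an increasing limit of finite regular measures on a non-$\sigma$-compact group; your fallback via the Brainerd--Edwards representation of the positive operator $\sum_n\rho^{-n}\rep_{\mu_0}^{\,n}$ is the cleaner way to close that gap and is the route consistent with how the surrounding text of the paper uses \cite{brainerd_edwards:1966a}.
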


Suppose that $p=1$ or that $1<p<\infty$ and that $\group$ is amenable. Combining \cref{res:gilbert} with the representation theorem by Brainerd and Edwards, we see that the natural map $\mu\mapsto \rep_{\mu}$ defines a bipositive complex algebra isomorphism $\rep: \measgroupC\to \mathrm{CV}_{p,\mathrm r}(\group,\CC)$ between $\measgroupC$ and $\mathrm{CV}_{p,\mathrm r}(\group,\CC)$. Since we know that $\mathrm{CV}_{p,\mathrm r}(\group,\CC)$ is a complex Banach lattice algebra, this bipositive vector space isomorphism $\rep$ is a complex Banach lattice algebra isomorphism. Since we also know that $\mathrm{CV}_{p,\mathrm r}(\group,\CC)$ is a complex Banach lattice subalgebra of $\regularops{\EllpgroupC}$, we see that the map $\rep:\measgroupC\to \regularops{\EllpgroupC}$ is a complex Banach lattice algebra homomorphism. This fact is a part of the statement of \cite[Proposition~3.3]{arendt:1981}.

\begin{remark}
Allowing ourselves a somewhat imprecise notation, we know from the above that  $\mathrm{CV}_{1,\mathrm{r}}(\group,\CC)=\measgroupC$ and, in addition, that, for $1<p<\infty$, $\mathrm{CV}_{p,\mathrm{r}}(\group,\CC)=\measgroupC$ whenever $\group$ is amenable. Since all bounded operators on an $\Ell^1$-space are regular (see \cite{kantorovich_vulich:1937}), we see that $\mathrm{CV}_{1}(\group,\CC)=\measgroupC$. This is Wendel's theorem; see \cite[Theorem~3.3.40]{dales_BANACH_ALGEBRAS_AND_AUTOMATIC_CONTINUITY:2000}, for example.
\end{remark}

As we know from \cref{res:action_of_measures_on_Ellp}, the map $\rep:\measgroupC\to\regularops{\EllpgroupC}$ is a complex Banach lattice homomorphism for all $p$ such that $1\leq p<\infty$. The amenability of $\group$ is not relevant for this.  As long as one is interested only in $\rep$ being a lattice homomorphism or not, the results in \cite{arendt:1981} are, therefore, not yet optimal. Comparing the machinery needed, including \cite{brainerd_edwards:1966a} and \cite{gilbert:1968}, for the approach in \cite{arendt:1981} on the one hand, with the proof of \cref{res:action_of_measures_on_Ellp} as based on \cref{res:general_result_for_semigroups} on the other hand, one could also argue that\textemdash as long as one is interested only in $\rep$ being a lattice homomorphism or not\textemdash the approach in \cite{arendt:1981} is more complicated than necessary.

Nevertheless, the approach in \cite{arendt:1981} raises a few natural questions, triggered by the description of $\mathrm{CV}_{p,\mathrm r}(\group,\CC)$ that it uses. For example, is there a more general underlying phenomenon that explains what is so special about $p=1$, which is the only case where the amenability of $\group$ does \emph{not} play a r{\^o}le in the description of the regular operators on $\EllpgroupC$ that commute with all right translations? A way to investigate this would be to consider a general Banach function space $\vl$ on $\group$ that is invariant under left and right translations. Under reasonable hypotheses, at least the bounded measures will act on $\vl$ via left convolutions. Is there then a representation theorem as in \cite{brainerd_edwards:1966a} again, stating that a positive operator on $\vl$ that commutes with all right translations is a left convolution with a (possibly unbounded) positive regular Borel measure? What are the properties of $\vl$ that determine whether the amenability of $\group$ is relevant or not for such a measure to be automatically bounded, as in Gilbert's work in \cite{gilbert:1968}?

\smallskip

Let us return to the spaces $\EllpgroupC$. Clearly, $\mathrm{CV}_{p,\mathrm{r}}(\group,\CC)\subseteq\mathrm{CV}_{p}({\group,\CC})$ for all $1\leq p<\infty$. Can the inclusions be proper? For $p=1$, all bounded operators on $\EllonegroupC$ are regular, as was already mentioned above, so in this case equality is automatic. For $1<p<\infty$, we have the following partial answer.

\begin{theorem}\label{res:cowling_fournier_applications}
Let $\group$ be an infinite, amenable, locally compact group, and take $p$ with $1<p<\infty$. Then $\mathrm{CV}_{p,\mathrm{r}}(\group,\CC)\subsetneq\mathrm{CV}_{p}({\group,\CC})$, and so there are bounded operators on $\EllpgroupC$ that commute with all right translations, but are not regular.
\end{theorem}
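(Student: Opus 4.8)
The plan is to combine the identification of the regular convolvers with $\measgroupC$, which is where amenability of $\group$ enters, with the existence of a bounded $p$-convolver that is not convolution by a measure, which is where the infiniteness of $\group$ enters.

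First I would record the reduction. As explained in the discussion preceding \cref{res:gilbert}, for an amenable group $\group$ and $1<p<\infty$ the representation theorem of Brainerd and Edwards \cite{brainerd_edwards:1966a}, combined with Gilbert's \cref{res:gilbert}, shows that the map $\mu\mapsto\rep_\mu$, where $\rep_\mu$ is the left convolution operator of \cref{eq:left_convolution_with_measure_repeated}, is a \emph{surjective} complex Banach lattice algebra isomorphism $\rep:\measgroupC\to\mathrm{CV}_{p,\mathrm{r}}(\group,\CC)$; this is part of \cite[Proposition~3.3]{arendt:1981}. Hence $\mathrm{CV}_{p,\mathrm{r}}(\group,\CC)=\desset{\rep_\mu:\mu\in\measgroupC}$. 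Since the inclusion $\mathrm{CV}_{p,\mathrm{r}}(\group,\CC)\subseteq\mathrm{CV}_p(\group,\CC)$ is trivial, the asserted \emph{strict} inclusion is equivalent to the existence of a bounded operator on $\EllpgroupC$ that commutes with all right translations but is \emph{not} of the form $\rep_\mu$ for any $\mu\in\measgroupC$; such an operator is then automatically non-regular. Thus it suffices to produce a single $p$-convolver lying outside $\rep(\measgroupC)$.

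Second, I would produce such a convolver, the point being that $\group$ is infinite. For the classical abelian models this is immediate: on $\group=\RR$, $\TT$, or $\ZZ$, the Hilbert transform (respectively the Riesz projection) is, by the theorem of M.~Riesz, a bounded operator on $\EllpgroupC$ for every $1<p<\infty$ that commutes with right translations, and its Fourier multiplier is, up to the constant $\pm\il$, the discontinuous function $\sgn$. Were this operator equal to some $\rep_\mu$, its multiplier would coincide with the Fourier--Stieltjes transform of the finite measure $\mu$, which is continuous; this contradiction shows the operator is not of the form $\rep_\mu$. (For $p=2$ it is even simpler: any bounded multiplier with discontinuous symbol lies in $\mathrm{CV}_2(\group,\CC)=\mathrm{VN}(\group)$ but not in $\rep(\measgroupC)$.)

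Finally, for a \emph{general} infinite, amenable, locally compact group $\group$ I would invoke the work of Cowling and Fournier \cite{cowling_fournier}, which supplies, for every $1<p<\infty$, a bounded $p$-convolver on $\EllpgroupC$ that is not convolution by a bounded measure. By the reduction of the first paragraph, any such operator witnesses $\mathrm{CV}_{p,\mathrm{r}}(\group,\CC)\subsetneq\mathrm{CV}_p(\group,\CC)$ and is, in particular, a bounded, right-translation-invariant, non-regular operator on $\EllpgroupC$. The main obstacle is precisely this last step: transporting the explicit abelian constructions to an arbitrary infinite amenable group, which is carried out by a transference argument (again exploiting amenability) together with the structure theory of such groups. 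This is the genuinely analytic input, and it is exactly what the cited results of Cowling and Fournier provide; once it is in hand, the theorem follows at once.
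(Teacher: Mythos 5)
Your proposal is correct and follows essentially the same route as the paper: both reduce, via the Brainerd--Edwards/Gilbert/Arendt identification $\mathrm{CV}_{p,\mathrm{r}}(\group,\CC)=\measgroupC$ for amenable $\group$, to exhibiting a bounded $p$-convolver that is not convolution by a bounded measure, and both take the existence of such an operator from Cowling and Fournier. The only cosmetic difference is that the paper runs this step contrapositively\textemdash if every $p$-convolver were a measure, then $\mathrm{CV}_{p}(\group,\CC)=\measgroupC\subseteq\mathrm{CV}_{q}(\group,\CC)$ for all $q$ with $1<q<\infty$, contradicting the non-inclusion result \cite[Theorem~2]{cowling_fournier:1976}\textemdash whereas you invoke their construction directly (and your Hilbert-transform illustration, while a helpful sanity check for $\RR$, $\TT$, and $\ZZ$, is not needed).
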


\begin{proof}
Assume, to the contrary, that $\mathrm{CV}_{p,\mathrm{r}}(\group,\CC)=\mathrm{CV}_{p}({\group,\CC})$ for some $p$ such that $1<p<\infty$. Then $\mathrm{CV}_{p}({\group,\CC})=\measgroupC$. We conclude from this that $\mathrm{CV}_{p}({\group,\CC})=\measgroupC\subseteq \mathrm{CV}_{q}({\group,\CC})$ for all $q$ such that $1<q<\infty$. This, however, contradicts \cite[Theorem~2]{cowling_fournier:1976}.
\end{proof}

This result leads to a few further questions.

First, is the analogue of \cref{res:cowling_fournier_applications} true for more general Banach function spaces $\vl$ on amenable groups that are invariant under left and right translations? To be more specific: for a translation invariant Banach function space $\vl$ on a amenable group, is it true that, whenever there are bounded operators on $\vl$ that are not regular, there are also  bounded operators on $\vl$ that are not regular and that commute with all right translations?

Second, is the amenability of $\group$ a necessary condition in \cref{res:cowling_fournier_applications} for the inclusion to be proper? Put more generally:  for a translation invariant Banach function space $\vl$ on a locally compact group, is it true that, whenever there are bounded operators on $\vl$ that are not regular, there are also  bounded operators on $\vl$ that are not regular and that commute with all right translations?


\subsection*{Acknowledgements} The results in this article were obtained in part when the first author held the Kloosterman Chair in Leiden in October 2017, and when the second author visited Lancaster University in October 2018. The financial support by the Mathematical Institute of Leiden University and the London Mathematical Society is gratefully acknowledged. 




\renewcommand{\btfs}{\mathrm}

\bibliography{general_bibliography}


\end{document}